\newcommand{\Z}{\mathbb{Z}}
\newcommand{\mf}[1]{\mathfrak{#1}}
\DeclareMathOperator{\supp}{supp}
\DeclareMathOperator{\Tor}{Tor}
\DeclareMathOperator{\set}{set}
\DeclareMathOperator{\lex}{lex}
\DeclareMathOperator{\sgn}{sgn}
\DeclareMathOperator{\rk}{rk}
\newtheorem{theorem}{Theorem}[section]
\newtheorem{corollary}[theorem]{Corollary}
\newtheorem{lemma}[theorem]{Lemma}
\newtheorem{proposition}[theorem]{Proposition}
\theoremstyle{definition}
\newtheorem{example}[theorem]{Example}
\newtheorem*{theorem*}{Theorem}
\title{Monomial Cycles in Koszul Homology}
\author{Jacob Zoromski}
\date{}
\begin{document}

\maketitle

\abstract{In this paper we study monomial cycles in Koszul homology over a monomial ring. The main result is that a monomial cycle is a boundary precisely when the monomial representing that cycle is contained in an ideal we introduce called the boundary ideal. As a consequence, we obtain necessary ideal-theoretic conditions for a monomial ideal to be Golod. We classify Golod monomial ideals in four variables in terms of these conditions. We further apply these conditions to symmetric monomial ideals, allowing us to classify Golod ideals generated by the permutations of one monomial. Lastly, we show that a class of ideals with linear quotients admit a basis for Koszul homology consisting of monomial cycles. This class includes the famous case of stable monomial ideals as well as new cases, such as symmetric shifted ideals.   }

\section{Introduction}

Let $k$ be a field, $S = k[x_1,\dots,x_n]$, and $R = S/I$ for a monomial ideal $I \subset S$ be generated in degree at least two.  In this paper, we study the Koszul homology of $R$ by focusing on monomial cycles. We are motivated by the following questions:

\begin{enumerate}
\item When does a monomial cycle represent a (non-) zero class in Koszul homology?

\item

\begin{enumerate}
\item
When do products of monomial cycles vanish in Koszul homology? 

\item
When does the vanishing of such products imply $R$ is Golod?

\end{enumerate}

\item Which ideals admit a basis for Koszul homology consisting of monomial cycles?

\end{enumerate}

Previous study of monomial cycles in Koszul homology focused on  Question 3. While we contribute to that question in this paper, we will primarily focus on the first two questions. Our main result answers Question 1 in terms of an inclusion in an ideal we introduce called the boundary ideal. 

\begin{theorem}
\label{main thm}
Let $u$ be a monomial of $S$. A cycle $\bar u e_{i_1} \wedge \dots \wedge e_{i_p}$ is a boundary in the Koszul complex of $R$ if and only if $u$ is contained in the boundary ideal $B^{n,\{i_1,\dots,i_p\}}_I$.
\end{theorem}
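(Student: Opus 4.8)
The whole statement is $\Z^n$-graded, so the plan begins by fixing the multidegree. Let $a\in\Z^n_{\ge 0}$ be the multidegree of the chain $z:=\bar u\, e_{i_1}\wedge\dots\wedge e_{i_p}$, write $\varepsilon_\ell$ for the $\ell$-th unit vector, set $T:=\{i_1,\dots,i_p\}$ and $\varepsilon_T:=\sum_j\varepsilon_{i_j}$, so that $u=x^{\,a-\varepsilon_T}$. Since the Koszul differential is homogeneous, $z$ is a boundary if and only if it is a boundary inside the degree-$a$ strand $(K^R_\bullet)_a$ of the Koszul complex; and since $B^{n,T}_I$ is a monomial ideal, $u\in B^{n,T}_I$ if and only if $u$ lies in the degree-$(a-\varepsilon_T)$ component of $B^{n,T}_I$. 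So everything reduces to an assertion about the single finite-dimensional complex $(K^R_\bullet)_a$.

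The first step is to make this strand explicit. With $\supp(a)=\{\ell:a_\ell\ge 1\}$ and $\varepsilon_W:=\sum_{\ell\in W}\varepsilon_\ell$ for $W\subseteq\supp(a)$, the space $(K^R_q)_a$ has $k$-basis the chains $x^{\,a-\varepsilon_W}e_W$ with $|W|=q$ and $x^{\,a-\varepsilon_W}\notin I$, and the differential sends $x^{\,a-\varepsilon_W}e_W$ to $\sum_{\ell\in W}\pm\,x^{\,a-\varepsilon_{W\setminus\ell}}e_{W\setminus\ell}$. Identifying $x^{\,a-\varepsilon_W}e_W$ with the oriented simplex on vertex set $W$ exhibits $(K^R_\bullet)_a$, up to a shift by one in homological degree, as the relative simplicial chain complex of the pair $(2^{\supp(a)},\Delta_a)$, where $\Delta_a:=\{W\subseteq\supp(a):x^{\,a-\varepsilon_W}\in I\}$ is a simplicial complex precisely because $I$ is an ideal. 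Since the full simplex $2^{\supp(a)}$ is acyclic, the connecting homomorphism yields $H_q(K^R)_a\cong\widetilde H_{q-2}(\Delta_a;k)$, and under this isomorphism the class of a monomial cycle $\bar u\, e_T$ --- whose cycle condition is exactly that $T\notin\Delta_a$ while $W\in\Delta_a$ for every $W\subsetneq T$ --- maps to the class $[\partial\langle T\rangle]$ of the boundary of the simplex on $T$. Thus the theorem is equivalent to the statement: $[\partial\langle T\rangle]=0$ in $\widetilde H_{p-2}(\Delta_a;k)$ if and only if $u\in B^{n,T}_I$.

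I would prove ($\Leftarrow$) by induction on $n$, following the recursive structure of the boundary ideal: the seed is the identity $\bar v\,\bar x_j\, e_T=\pm\,\partial(\bar v\, e_{T\cup\{j\}})$, valid whenever $j\notin T$ and $v\,\mf m_T\subseteq I$, which shows that every monomial of $x_j\,(I:\mf m_T)$ --- and trivially of $I$ --- represents a boundary, while the remaining generators of $B^{n,T}_I$ come from its recursion applied to data in fewer variables, and one assembles the bounding chain $\eta$ by the matching recursion (in the simplicial picture, by filling cells). For ($\Rightarrow$), assume $\bar u\, e_T=\partial\eta$ with $\eta\in(K^R_{p+1})_a$ and again induct on $n$. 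Split $\eta=\eta_0+e_n\wedge\eta_1$ into its part free of $e_n$ and the rest, and let $\partial'$ be the Koszul differential on $x_1,\dots,x_{n-1}$; comparing $e_n$-components pins down the chain $\partial'\eta_1$ explicitly (it is $0$ or $\pm\bar u\, e_{T\setminus n}$ according as $n\notin T$ or $n\in T$), so that $\eta_1$ is, in $n-1$ variables, a monomial cycle or a bounding chain, while the remaining component expresses $\bar u\, e_T$ (or $0$) as $\partial'\eta_0+\bar x_n\eta_1$. Feeding $\eta_1$ to the inductive hypothesis and analysing $\eta_0$ against the colon ideals $(I:x_n^{\,t})$ then yields $u\in B^{n,T}_I$, the cases $n\in T$ and $n\notin T$ being treated separately. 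I expect this recombination to be the main obstacle: the contribution coming from $\eta_1$ is governed by a ``link'' colon ideal and that coming from $\eta_0$ by a ``deletion'' ideal, and merging them into the single membership $u\in B^{n,T}_I$ is --- on the topological side --- exactly the Mayer--Vietoris sequence relating $\widetilde H_\bullet(\Delta_a)$ to the reduced homology of the link and the deletion of the vertex $n$; the recursive definition of the boundary ideal should be set up precisely so that this merging closes, and careful tracking of signs and of which monomials have fallen into $I$ at each stage is where the work is.
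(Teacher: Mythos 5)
Your opening reduction is fine: identifying the multidegree-$a$ strand $(K^R_\bullet)_a$ with the relative chain complex of $(2^{\supp(a)},\Delta_a)$ and sending the monomial cycle to $[\partial\langle T\rangle]\in\widetilde H_{p-2}(\Delta_a;k)$ is a correct (Hochster-type) repackaging of the same observation the paper uses, namely that $\partial$ preserves multidegree. The gap is everything after that. The theorem asserts equivalence with membership in $B^{n,T}_I$, and $B^{n,T}_I$ is \emph{not} defined recursively: it is $\bigcap_{C}\sum_{\sigma'\in C} x_{\sigma'\setminus T}[I:x_{T\setminus \sigma'}]$, the intersection over the circuits $C$ containing $T$ of the matroid $\mathcal M^n_p[k]$ of minimal linear dependences among the rows of the signed incidence matrix $M(n,p)$ over $k$. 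Your ($\Leftarrow$) sketch only produces boundaries for monomials in $I+(x_j:j\notin T)[I:(x_i:i\in T)]$ (the paper's Proposition \ref{mon bound ideal}, which is in general a strictly smaller ideal) and then appeals to ``the recursion of $B^{n,T}_I$ in fewer variables''; your ($\Rightarrow$) direction ends by hoping a deletion/link Mayer--Vietoris merging ``closes because the recursive definition is set up that way.'' No such recursion is part of the definition, and establishing one compatible with the colon ideals $(I:x_n^t)$ is essentially the whole content of the theorem, not a technical obstacle. Note also that $\mathcal C^n_p[k]$ depends on the characteristic of $k$ for $3\le p\le n-3$, so any purely simplicial/combinatorial recursion must encode the field; your plan never confronts where the linear dependences over $k$ enter. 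As it stands, the central equivalence is asserted rather than proved.

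For comparison, the paper's argument is a direct linear-algebra computation in the fixed multidegree: any potential preimage of $\bar u e_T$ has the form $\sum_\tau c_\tau\,\overline{(x_T u/x_\tau)}\,e_\tau$, so being a boundary is the consistency of an explicit linear system whose equations are rows of $M(n,p)$ --- the row $v_T$ set equal to $1$, and $v_{\sigma'}$ set to $0$ precisely for those $\sigma'$ with $u\notin x_{\sigma'\setminus T}[I:x_{T\setminus\sigma'}]$. A rank comparison shows the system is inconsistent exactly when $v_T$ participates in a minimal dependence among the retained rows, i.e.\ when there is a circuit $C\ni T$ with $u\notin\sum_{\sigma'\in C}I^T_{\sigma'}$, which is verbatim the definition of $B^{n,T}_I$. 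If you want to rescue your route, you would still need this step (or an equivalent matroid-theoretic argument) to pass from $\widetilde H_{p-2}(\Delta_a;k)$ to the circuits of $\mathcal M^n_p[k]$; the topological picture by itself does not supply it.
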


The boundary ideal for $p = n$ or $p = n-1$ is easy to compute. There are no boundaries in homological degree $n$, so $B^{n,\{1,\dots,n\}}_I = I$. In three variables, a monomial cycle $\bar u e_1 \wedge e_2$ can be written as the boundary of a single element $ \bar v e_1 \wedge e_2 \wedge e_3$. This means that
\[
\partial( \bar v e_1 \wedge e_2 \wedge e_3 \wedge e_4) = \overline{x_1 v} e_2 \wedge e_3 - \overline{x_2 v} e_1 \wedge e_1 + \overline{x_3 v} e_1 \wedge e_2 = \bar u e_1 \wedge e_2
\]
We must have $u = x_3 v$ and $x_1 v,x_2 v$ are in $I$, so  either $u$ is in $I$ or $u$ is in $x_3[I:(x_1,x_2)]$. Therefore, $B_I^{3,\{1,2\}} = I + x_3[I:(x_1,x_2)]$. 

\begin{example}
Let $I = (x_1x_3,x_2x_3) \subset k[x_1,x_2,x_3]$. Now $I:(x_1,x_2) = (x_3)$, so $\bar x_3 e_1 \wedge e_2$ is a monomial cycle. The boundary ideal associated to $\{1,2\}$ is  $I + x_3[I:(x_1,x_2)] = (x_1x_3,x_2x_3,x_3^2)$. So,  $\bar x_3 e_1 \wedge e_2$ is not a boundary, while  $\bar x_3^2 e_1 \wedge e_2$ is: it is the boundary of $\bar x_3 e_1 \wedge e_2 \wedge e_3$.

\end{example}

\par
In general, computing boundary ideals is difficult. Their structure is governed by the circuits of the full simplicial matroid $S^n_{n-p}[k]$. Simplicial matroids form an important and well-studied class of matroids; see \cite{cordovillindstrom} for a survey of results and \cite{reiner} for a more recent example. We apply results about full simplicial matroids from Crapo and Rota \cite{cr}, Cordovil \cite{cordovil}, Cordovil and Las Vergnas \cite{clv}, and Lindstr\"om \cite{lind} to obtain an explicit recipe for computing the boundary ideal when $p=2$ and $p=n-2$,  and to demonstrate that the boundary ideal depends on the characteristic of $k$ for $3 \leq p \leq n-3$. 

\begin{example}
The circuits of $S^4_2[k]$ are in bijection with the 2-color graphs of four vertices. To construct $B_I^{4,\{1,2\}}$, we take those circuits containing $\{1,2\}$, i.e. the 2-color graphs where 1 and 2 are different colors, as shown below.

\begin{tikzcd}
{\color{blue}1} \arrow[r, no head] \arrow[rd, no head] \arrow[d, no head] & {\color{red}2} &  & {\color{blue}1} \arrow[r, no head] \arrow[rd, no head] & {\color{red}2} &  & {\color{blue}1} \arrow[r, no head] \arrow[d, no head] & {\color{red}2}                    &  & {\color{blue}1} \arrow[r, no head]  & {\color{red}2}                    \\
{\color{red}3}                                                           & {\color{red}4} &  & {\color{blue}3} \arrow[ru, no head] \arrow[r, no head] & {\color{red}4} &  & {\color{red}3} \arrow[r, no head]                    & {\color{blue}4} \arrow[u, no head] &  & {\color{blue}3} \arrow[ru, no head] & {\color{blue}4} \arrow[u, no head]
\end{tikzcd}

\end{example}

The boundary ideal is the intersection of certain ideals associated to each circuit:
\begin{align*}
B^{4,\{1,2\}}_I &= (I + x_3 [I:x_2] + x_4 [I:x_2]) \\ 
& \cap (I + x_4 [I: x_2] + x_3 [I:x_1] + x_3 x_4 [I: x_1 x_2])\\
& \cap (I + x_3 [I: x_2] + x_4 [I:x_1] + x_3 x_4 [I:x_1 x_2]) \\
& \cap (I + x_3 [I:x_1] + x_4 [I:x_1]) \\
&= I + (x_3,x_4)[I:(x_1,x_2)] + (x_3x_4)[I:x_1x_2] \cap ( x_3[I:x_1] \cap x_4[I:x_2] + x_3[I:x_2] \cap x_4[I:x_1] )
\end{align*}

\par
The differential graded algebra structure of the Koszul complex naturally descends to an algebra structure on Koszul homology. Since a product of monomial cycles is again a monomial cycle, Theorem \ref{main thm} supplies an answer to Question 2(a).

\begin{theorem}
\label{necessary cond}
Let $I$ be a monomial ideal. Then all products of monomial cycles in Koszul homology vanish if and only if the inclusions 
\[
[I:(x_1,\dots,x_q)][I:(x_{q+1},\dots,x_p)] \subset B_I^{n,\{1,2,\dots,p\}}
\]
hold for all $2 \leq p \leq n$ and for all $1 \leq q < p$ and for all permutations of the indices.
\end{theorem}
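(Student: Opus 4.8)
The plan is to reduce the statement directly to Theorem \ref{main thm}: I would unwind what it means for a product of two monomial cycles to vanish in homology, and then translate the resulting family of ideal inclusions into the indexing used in the statement. Since the real content lies in Theorem \ref{main thm}, I expect no genuine obstacle here; the work is bookkeeping, and the only points needing care are flagged at the end.

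I would begin by recalling the Koszul complex $K^R$ of $R$ as a DG algebra: $K^R_p = \Lambda^p\!\big(\bigoplus_i Re_i\big)$, with differential $\partial(e_{i_1}\wedge\cdots\wedge e_{i_p}) = \sum_j(-1)^{j+1}x_{i_j}\,e_{i_1}\wedge\cdots\widehat{e_{i_j}}\cdots\wedge e_{i_p}$ and wedge product multiplication; write $e_A = e_{i_1}\wedge\cdots\wedge e_{i_p}$ for $A = \{i_1 < \cdots < i_p\}$. From the differential, $\bar u\,e_A$ is a cycle exactly when $x_{i_j}u \in I$ for all $j$, i.e. when $u \in I:(x_i : i\in A)$. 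Now take two monomial cycles $\bar u\,e_A$ and $\bar v\,e_B$, with $u \in I:(x_i:i\in A)$ and $v \in I:(x_i:i\in B)$. Their product in $K^R$ is $\pm\,\overline{uv}\,e_{A\cup B}$: it is $0$ when $A\cap B\neq\emptyset$, and when $A\cap B=\emptyset$ it is again a monomial cycle, since $x_iuv\in I$ for every $i\in A$ (as $x_iu\in I$) and every $i\in B$ (as $x_iv\in I$), so $uv \in I:(x_i : i\in A\cup B)$. Thus the products of monomial cycles that are not automatically zero are exactly those with $A,B$ disjoint and nonempty, and such a product vanishes in homology if and only if $\overline{uv}\,e_{A\cup B}$ is a boundary.

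Next I would apply Theorem \ref{main thm}: $\overline{uv}\,e_{A\cup B}$ is a boundary if and only if $uv \in B_I^{n,A\cup B}$. Because $I:(x_i:i\in A)$ and $I:(x_i:i\in B)$ are monomial ideals and $B_I^{n,A\cup B}$ is an ideal, demanding $uv \in B_I^{n,A\cup B}$ for all monomials $u \in I:(x_i:i\in A)$ and $v \in I:(x_i:i\in B)$ is equivalent to the single inclusion
\[
\big[I:(x_i:i\in A)\big]\big[I:(x_i:i\in B)\big] \subseteq B_I^{n,A\cup B},
\]
since the product ideal on the left is generated by exactly these products $uv$. Hence all products of monomial cycles vanish in Koszul homology if and only if this inclusion holds for every pair of disjoint nonempty $A,B\subseteq\{1,\dots,n\}$; products of three or more monomial cycles then vanish automatically, because if $c_1c_2 = \partial b$ then $c_1c_2c_3 = \partial(bc_3)$ as $c_3$ is a cycle. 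Finally, given such $A,B$ I set $p = |A|+|B|$ and $q = |A|$ (so $2\le p\le n$ and $1\le q<p$) and choose a permutation $\sigma$ of $\{1,\dots,n\}$ with $\sigma(\{1,\dots,q\}) = A$ and $\sigma(\{q+1,\dots,p\}) = B$; this identifies the displayed inclusion with $[I:(x_{\sigma(1)},\dots,x_{\sigma(q)})][I:(x_{\sigma(q+1)},\dots,x_{\sigma(p)})] \subseteq B_I^{n,\{\sigma(1),\dots,\sigma(p)\}}$, which is precisely the inclusion of the statement read after permuting indices, and conversely every inclusion in the statement arises from $A = \sigma(\{1,\dots,q\})$, $B = \sigma(\{q+1,\dots,p\})$.

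The substance of the argument is entirely contained in Theorem \ref{main thm}, so there is no serious obstacle in the remainder. The only subtleties to watch are that the sign in the DG-algebra product is irrelevant (it is a unit and we only test membership in an ideal), that the cycle condition is exactly what forces the colon ideals $I:(x_i:i\in A)$ and $I:(x_i:i\in B)$ to appear, and that the passage from "for all monomials $u,v$" to a containment of product ideals uses that we are working with monomial ideals.
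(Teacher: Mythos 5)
Your proposal is correct and follows essentially the same route as the paper (Proposition \ref{product condition}): a product of monomial cycles is again a monomial cycle, so Theorem \ref{main thm} converts its vanishing in homology into membership of $uv$ in the boundary ideal, which over a monomial ideal is equivalent to the stated containment of product ideals. Your extra remarks (disjointness of supports, higher products vanishing via the Leibniz rule, and the reindexing by a permutation) are just the bookkeeping the paper leaves implicit.
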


The vanishing of products of Koszul homology classes is closely related to the study of infinite free resolutions. Of particular interest in this regard is the Golod condition. When $I$ (or $R$) is Golod, all products of Koszul homology classes of homological degree at least one vanish. The implication cannot be reversed, however, as a famous example of Katth\"an \cite{katthan} shows. In the same paper, the author demonstrates that Golodness is equivalent to the vanishing of Koszul homology products in four variables. We show something stronger: to check Golodness in four variables, it is enough to check that products of monomial cycles vanish, positively answering Question 2(b) in this case. Therefore, as a consequence of Theorem \ref{necessary cond}, we obtain a classification of Golod monomial ideals in four variables.

\begin{theorem}
\label{four var}
Let $I$ be a monomial ideal in $k[x_1,x_2,x_3,x_4]$. Then $I$ is Golod if and only if the following hold under all permutations of the indices:
    \[
    [I:x_1][I:(x_2,x_3,x_4)] \subset I
    \]
    \[
    [I:(x_1,x_2)][I:(x_3,x_4)] \subset I
    \]
    \[
    [I:x_1][I:(x_2,x_3)] \subset I + x_4[I:(x_1,x_2,x_3)] 
    \]
    \[
    [I:x_1][I:x_2] \subset I + (x_3,x_4)[I:(x_1,x_2)] + (x_3x_4)[I:x_1x_2] \cap ( x_3[I:x_1] \cap x_4[I:x_2] + x_3[I:x_2] \cap x_4[I:x_1] )
    \]
\end{theorem}

Theorem \ref{four var} is an extension of the classification of three variable Golod monomial ideals by Dao and De Stefani \cite{dao-destefani-2020} in terms of ideal inclusions. A key element in their proof was showing that for all monomial ideals in three variables the Koszul homology admits a basis of monomial cycles (answering Question 3 positively in this case). What's surprising is that Theorem \ref{four var} holds despite the fact that not all Koszul homology classes can be represented by a monomial cycle in four or more variables, as the following example shows.

\begin{example}
Let $I = (x_1 x_3, x_1 x_4, x_2 x_3, x_2 x_4)$. As pointed out in \cite{dao-destefani-2020}, the non-zero homology class $[\bar x_1 e_2 \wedge e_3 \wedge e_4 - \bar x_2 e_1 \wedge e_3 \wedge e_4]$ cannot be expressed as a monomial cycle. We return to this example and give an alternate proof of this fact in terms of boundary ideals in Section \ref{section prelim}.

\end{example}

Another case where we can positively answer Question 2(b) is for certain (conjecturally, all) symmetric monomial ideals. A symmetric monomial ideal is a monomial ideal that is stable under the action of the symmetric group $\mf S_n$ which permutes the variables.  They can be described in terms of partitions $\lambda = (\lambda_1,\dots,\lambda_n) \in \Z_{\geq 0}^n$ with $\lambda_1 \geq \lambda_2 \geq \dots \geq \lambda_n$. That is, each symmetric monomial ideal $I$ has a unique set of partitions $\Lambda(I)$ such that 
\[
I = \sum_{\lambda \in \Lambda(I)} (x_1^{\lambda_{\pi(1)}} x_2^{\lambda_{\pi(2)}} \dots x_n^{\lambda_{\pi(n)}}: \pi \in \mf S_n)
\]
In the symmetric case, we are able to translate Theorem \ref{necessary cond} to conditions on the generating partitions $\Lambda(I)$. To state the theorem, we say $\ell(\lambda) := \max\{i : \lambda_i \neq 0\}$ is the length of a partition $\lambda$.

\begin{theorem}
\label{partition conditions}
Let $I$ be a symmetric monomial ideal. Then all products of monomial cycles in Koszul homology vanish if and only if for each $2 \leq p \leq n$, one of the following holds:

\begin{enumerate}
\item For all partitions $\lambda \in \Lambda(I)$ of length at most $\lfloor p/2 \rfloor$ with minimal first term among partitions in $\Lambda(I)$ of length at most $\lceil p/2 \rceil$, $\lambda_1 = \lambda_2$.

\item There exists a $\mu \in \Lambda(I)$ with $\lceil p/2 \rceil < \ell(\mu) \leq p$ such that $\mu_1$ smaller than the minimal first term among partitions of length at most $\lceil p/2 \rceil$ in $\Lambda(I)$.

\end{enumerate}

\end{theorem}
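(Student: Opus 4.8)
The plan is to derive the statement from Theorem~\ref{necessary cond} by translating the colon‑into‑boundary‑ideal inclusions listed there into conditions on $\Lambda(I)$. Since $I$ is $\mf S_n$‑stable, for $\pi\in\mf S_n$ the corresponding automorphism of $S$ sends $B_I^{n,T}$ to $B_I^{n,\pi(T)}$ and $[I:(x_{i_1},\dots,x_{i_q})]$ to $[I:(x_{\pi(i_1)},\dots,x_{\pi(i_q)})]$; combined with graded‑commutativity of the product of monomial cycles (so the splits $(q,p-q)$ and $(p-q,q)$ give the same condition), this shows that in Theorem~\ref{necessary cond} it suffices, for each fixed $p$, to test the finite family
\[
[I:(x_1,\dots,x_q)]\,[I:(x_{q+1},\dots,x_p)]\ \subseteq\ B_I^{n,\{1,\dots,p\}},\qquad 1\le q\le\lfloor p/2\rfloor .
\]
So it is enough to prove, for each $p$, that this family holds iff (1) or (2) holds for $p$. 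Two ambient facts will be used throughout: (a) $x^a\in I$ iff the decreasing rearrangement $\hat a$ of $a$ dominates some $\lambda\in\Lambda(I)$ coordinatewise; (b) from Theorem~\ref{main thm}, by inspecting the Koszul differential in homological degree $p+1$, every monomial in $B_I^{n,\{1,\dots,p\}}\setminus I$ is divisible by some $x_j$ with $j>p$, while $(x_{p+1},\dots,x_n)[I:(x_1,\dots,x_p)]\subseteq B_I^{n,\{1,\dots,p\}}$. I would also prove a structural lemma on the minimal generators of $[I:(x_1,\dots,x_q)]$ for symmetric $I$: a partition $\lambda\in\Lambda(I)$ with $\ell(\lambda)\le q$ contributes the efficient generator $(x_1\cdots x_q)^{\lambda_1-1}$ precisely when $\lambda_1>\lambda_2$ (in particular when $\ell(\lambda)=1$), whereas when $\lambda_1=\lambda_2$ the generators it produces carry exponent $\ge\lambda_1$ on many variables and are far less efficient; there are also further generators, necessarily supported partly outside $\{x_1,\dots,x_q\}$, that a full treatment must track.

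For the implication ``inclusions $\Rightarrow$ conditions'' I argue the contrapositive. If (1) fails for $p$, there is $\lambda\in\Lambda(I)$ with $\ell(\lambda)\le\lfloor p/2\rfloor$, with $\lambda_1=d:=\min\{\nu_1:\nu\in\Lambda(I),\ \ell(\nu)\le\lceil p/2\rceil\}$, and with $\lambda_1>\lambda_2$; since $I$ is generated in degrees $\ge 2$ this forces $d=\lambda_1\ge 2$. Take $q=\lfloor p/2\rfloor$, $u=(x_1\cdots x_q)^{d-1}$, $v=(x_{q+1}\cdots x_p)^{d-1}$. By fact (a) together with $\lambda_1>\lambda_2$ and $\ell(\lambda)\le q\le p-q$, we get $u\in[I:(x_1,\dots,x_q)]$ and $v\in[I:(x_{q+1},\dots,x_p)]$, while $uv=(x_1\cdots x_p)^{d-1}$ has $\widehat{uv}=(d-1,\dots,d-1)$ of length $p$. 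Any $\nu\in\Lambda(I)$ dividing $uv$ would need $\ell(\nu)\le p$ and $\nu_1\le d-1<d$, impossible for $\ell(\nu)\le\lceil p/2\rceil$ by the definition of $d$ and impossible for $\lceil p/2\rceil<\ell(\nu)\le p$ by the failure of (2); hence $uv\notin I$. As $uv$ is divisible by no $x_j$ with $j>p$, fact (b) gives $uv\notin B_I^{n,\{1,\dots,p\}}$, so the inclusion for $(p,q)$ fails and not all products of monomial cycles vanish.

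For the converse, assume (1) or (2) for $p$, fix $q\le\lfloor p/2\rfloor$, and take minimal generators $u\in[I:(x_1,\dots,x_q)]$ and $v\in[I:(x_{q+1},\dots,x_p)]$ with $u,v\notin I$ (else $uv\in I\subseteq B_I^{n,\{1,\dots,p\}}$). If (2) holds, choose $\mu^*\in\Lambda(I)$ with $\lceil p/2\rceil<\ell(\mu^*)\le p$ and $\mu^*_1<d$; using the structural lemma one shows that $u$ and $v$ each meet the value $d-1$ on enough coordinates that $uv$ dominates $\mu^*$, so $uv\in I$. If (1) holds, then no minimal generator of $[I:(x_1,\dots,x_q)]$ arising from a partition of length $\le\lfloor p/2\rfloor$ with first part $d$ is efficient, and one shows that any product $uv$ escaping $I$ must be divisible by some $x_j$ with $j>p$ in a way that places $uv$ inside $(x_{p+1},\dots,x_n)[I:(x_1,\dots,x_p)]$ — or, when necessary, inside one of the finer terms of $B_I^{n,\{1,\dots,p\}}$ built from higher products of the $x_j$, $j>p$ — whence $uv\in B_I^{n,\{1,\dots,p\}}$ by fact (b). By the reduction above this establishes all the required inclusions.

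The reduction and the contrapositive direction are routine once the symmetry observations are in place; the hard part is this converse. Its two sticking points are (i) a precise enough enumeration of the minimal generators of the colon ideals $[I:(x_1,\dots,x_q)]$ for symmetric $I$ — in particular those supported away from $\{x_1,\dots,x_q\}$, which really do occur once there are enough variables — and (ii) verifying that products of two such generators always land in $B_I^{n,\{1,\dots,p\}}$, which in general invokes the matroid‑circuit description of the boundary ideal. Carrying the two alternatives (1) and (2) uniformly over all splits $q\le\lfloor p/2\rfloor$ is where I expect the bulk of the work to lie.
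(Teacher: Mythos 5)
Your reduction by symmetry and graded-commutativity to the splits $1\le q\le\lfloor p/2\rfloor$ is fine, and your contrapositive direction is essentially complete and matches the paper: when (1) and (2) both fail you take $u=(x_1\cdots x_q)^{d-1}$, $v=(x_{q+1}\cdots x_p)^{d-1}$ with $q=\lfloor p/2\rfloor$, and since $uv$ is not in $I$ and involves no variable $x_j$ with $j>p$, it cannot lie in $B_I^{n,\{1,\dots,p\}}$ (every monomial of that ideal outside $I$ carries such a variable, or equivalently, by multidegree support, $\overline{uv}\,e_{1\dots p}$ cannot be a boundary). This is exactly how the paper produces the non-vanishing product.

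The genuine gap is the converse, which you leave as a sketch with two acknowledged ``sticking points,'' and the route you propose for it (enumerating the minimal generators of $[I:(x_1,\dots,x_q)]$ for symmetric $I$, then checking products against the full matroid-circuit description of $B_I^{n,\{1,\dots,p\}}$, falling back on ``finer terms'' of the boundary ideal when needed) is precisely what the paper's argument is designed to avoid. The missing idea is the paper's Lemma \ref{lemma c}: if $f\in I:(x_1,\dots,x_q)$, $g\in I:(x_{q+1},\dots,x_p)$ are monomials with $fg\notin I+(x_{p+1},\dots,x_n)[I:(x_1,\dots,x_p)]$, then applying transpositions (a transposition $(i,j)$ with $a_i>b_j$ would place a permutation of $fg$ inside the symmetric ideal $I$, and a similar symmetry argument kills all exponents on $x_k$, $k>p$) forces the rigid form $f=(x_1\cdots x_q)^c$, $g=(x_{q+1}\cdots x_p)^c$ with $c+1=\min\{\lambda_1:\lambda\in\Lambda(I),\ \ell(\lambda)\le p-q\}$. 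This does two things at once: it makes any enumeration of colon generators unnecessary, and it shows (via the support/multidegree observation you already have as fact (b)) that the inclusion into $B_I^{n,\{1,\dots,p\}}$ is \emph{equivalent} to the inclusion into the elementary ideal $I+(x_{p+1},\dots,x_n)[I:(x_1,\dots,x_p)]$ --- so your fallback to ``finer terms'' of the boundary ideal never arises, and conditions (1)/(2) translate directly into whether the single monomial $x_1^c\cdots x_p^c$ lies in $I$. Without this lemma (or a substitute of comparable strength) your (1)-case claim that any product escaping $I$ lands in $(x_{p+1},\dots,x_n)[I:(x_1,\dots,x_p)]$ is unsubstantiated. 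Separately, note that (1)/(2) as stated correspond only to the split $q=\lfloor p/2\rfloor$; to cover the remaining splits you still need the paper's monotonicity step (the inclusion for $q$ implies the inclusion for $q-1$, proved via the partition reformulation), which your outline does not supply.
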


\begin{example}
\label{nonvanishing prod}
Let $\Lambda(I) = \{(3,0,0,0),(2,1,0,0)\}$, that is,
\[
I = (x_1^3,x_2^3,x_3^3,x_4^3, x_1^2x_2, x_1^2x_3, x_1^2x_4, x_1x_2^2, x_2^2x_3, x_2^2x_4, x_1x_3^2, x_2x_3^2, x_3^2x_4, x_1x_4^2, x_2x_4^2, x_3x_4^2)
\]
For $p=4$, condition 1 fails because the first two terms of the partition $(2,1,0,0)$ are not equal. Condition 2 fails as well, as there are no partitions of length greater than 2 in $\Lambda(I)$. So, there should be a non-vanishing product of monomial cycles. 
\par
Indeed, $[(\overline{x_1x_2} e_1 \wedge e_2][(\overline{x_3x_4} e_3 \wedge e_4)] = [\overline{x_1 x_2 x_3 x_4} e_1 \wedge e_2 \wedge e_3 \wedge e_4]$ is a non-zero homology class. In fact, this ideal was used by De Stafani in \cite{DESTEFANI20162289} as the first known example of a products of ideals that was not Golod, as $I = (x_1,x_2,x_3,x_4)(x_1^2,x_2^2,x_3^2,x_4^2)$.

\end{example}

We are unaware of any symmetric monomial ideals satisfying the conditions of Theorem \ref{partition conditions} but are not Golod. Thus, we conjecture that Question 2(b) holds for all symmetric monomial ideals. As evidence, we prove that symmetric monomial ideals generated by the $\mf S_n$-orbit of one monomial which satisfy the conditions of Theorem \ref{partition conditions} are Golod.

\begin{theorem}
\label{symm principal}
Let $I$ be a symmetric monomial ideal with $\Lambda(I) = \{\lambda\}$. Then $I$ is Golod if and only if $\ell(\lambda) > \lfloor n/2 \rfloor$ or both $\ell(\lambda) \leq \lfloor n/2 \rfloor$ and $\lambda_1 = \lambda_2$.

\end{theorem}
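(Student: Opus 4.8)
The proof splits into the two implications, and in both the first move is to unwind Theorem~\ref{partition conditions} in the single-orbit case $\Lambda(I) = \{\lambda\}$. Writing $\ell = \ell(\lambda)$, the partitions of $\Lambda(I)$ of length at most $\lceil p/2\rceil$ form either $\{\lambda\}$ (if $\ell \le \lceil p/2\rceil$) or the empty set, so clause~(2) of Theorem~\ref{partition conditions} is never available; and clause~(1) says exactly ``$\lambda_1 = \lambda_2$'' when $\ell \le \lfloor p/2\rfloor$ and is vacuous when $\ell > \lfloor p/2\rfloor$. Hence the criterion of Theorem~\ref{partition conditions} holds for all $2 \le p \le n$ if and only if it holds for the values $p$ with $\ell \le \lfloor p/2\rfloor$, that is (such $p \le n$ exist precisely when $\ell \le \lfloor n/2\rfloor$) if and only if $\ell(\lambda) > \lfloor n/2\rfloor$ or $\lambda_1 = \lambda_2$. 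Since a Golod ring has vanishing products of positive-degree Koszul homology classes, and products of monomial cycles are monomial cycles, this already gives one direction: if $\ell(\lambda) \le \lfloor n/2\rfloor$ and $\lambda_1 > \lambda_2$ then the criterion fails, Theorem~\ref{partition conditions} exhibits a nonzero product of two monomial cycles (as in Example~\ref{nonvanishing prod}), and $I$ is not Golod.

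For the converse I would assume $\ell(\lambda) > \lfloor n/2\rfloor$ or $\lambda_1 = \lambda_2$; by the reduction just made, all products of monomial cycles vanish, and the remaining task --- which is the real content of the statement --- is to promote this to Golodness. I would do so by producing a trivial Massey operation on the positive-degree Koszul homology of $R$. The first step is to describe the Koszul homology of a single-orbit symmetric ideal explicitly: sorting the monomial cycles $\bar w\, e_{S}$ by the exponent vector of $w$ and applying Theorem~\ref{main thm} (membership in the boundary ideals $B_I^{n,S}$) should yield an explicit basis consisting of monomial cycles. The second step is to analyze those boundary ideals well enough to choose, for each tuple of basis cycles, representatives of the iterated Massey products lying in the image of the differential. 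I expect the two hypotheses to enter in genuinely different ways: when $\ell(\lambda) > \lfloor n/2\rfloor$ the supports occurring in any product are forced to overlap so heavily (two subsets of $\{1,\dots,n\}$ of size larger than $n/2$ always meet) that the relevant monomials already lie in $I$, so the Massey products are boundaries for the cheapest possible reason; when $\lambda_1 = \lambda_2$ one must instead use the matroid-circuit description of $B_I^{n,S}$, the point being that the equality $\lambda_1 = \lambda_2$ is exactly what makes the summands of the form $x_j[I:(x_{i_1},\dots)]$ absorb the products in question. A cleaner alternative, if it can be arranged, would be to bypass Massey operations altogether by identifying $I$ in each regime with an ideal already known to be Golod --- e.g.\ by showing the polarization of $I$ becomes componentwise linear, or that $I$ lies in the linear-quotients-with-monomial-cycle-basis family (which contains the symmetric shifted ideals) --- and then quoting the corresponding known criterion.

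The crux, and the step I expect to be the real obstacle, is precisely this passage from the vanishing of binary products (already delivered by Theorem~\ref{necessary cond}/Theorem~\ref{partition conditions}) to a coherent choice of all higher Massey products: a preimage under the differential of a monomial cycle need not itself be a monomial cycle, so the defining systems must be controlled beyond the monomial setting, and it is in taming these that the dichotomy ``$\ell(\lambda) > \lfloor n/2\rfloor$'' versus ``$\lambda_1 = \lambda_2$'' has to be exploited --- support size alone suffices in the first case, while the second genuinely needs the fine structure of the boundary ideals $B_I^{n,S}$ for a single-orbit symmetric ideal.
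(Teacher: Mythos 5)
Your necessity direction and your treatment of the case $\ell(\lambda) > \lfloor n/2 \rfloor$ match the paper: the unwinding of Theorem~\ref{partition conditions} for a single orbit is exactly the paper's reduction, and the observation that two multidegree supports of size greater than $n/2$ must overlap is precisely how the paper kills all Massey operations in that regime (via Proposition~\ref{orthogonal}, i.e.\ passing to the polarization, where nonzero classes have squarefree multidegrees, so classes with overlapping supports can contribute only zero to any Massey product).

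The genuine gap is the remaining case $\ell(\lambda) \leq \lfloor n/2 \rfloor$, $\lambda_1 = \lambda_2$, which you yourself flag as the unresolved crux. Your proposed first step---produce an explicit monomial cycle basis of $H_*(K^R)$ and then build defining systems for the higher Massey products out of the boundary ideals $B_I^{n,\sigma}$---is not carried out and is unlikely to be salvageable as stated: principal symmetric ideals need not admit monomial cycle bases at all (the paper's example $\Lambda(I)=\{(2,1,1,0)\}$ exhibits a class in $H_3$ not representable by any monomial cycle), and nothing in your sketch controls the non-monomial classes or the choices of defining systems beyond binary products, which is exactly where the difficulty lies. The paper avoids constructing defining systems entirely. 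It quotes the known multigraded Betti numbers of $I_\lambda$ (Theorem~\ref{multidegs}), so it knows the possible multidegrees and homological degrees of all nonzero classes $h_1,\dots,h_r$ and of any element of $\mu_r(h_1,\dots,h_r)$; it then combines Proposition~\ref{orthogonal} (the multidegrees of the $h_s$ must have pairwise disjoint supports, so the exponent multiplicities $q_k^{(s)}$ sum to $q_k$) with the two bookkeeping identities that Massey operations add multidegrees and have homological degree $\sum_s m_s + r - 2$. Writing both sides of the homological-degree identity in terms of the data of Theorem~\ref{multidegs} produces an equation whose right-hand side is a sum of nonnegative terms while the left-hand side equals $(2-p_1)(r-1)-1$, which is negative precisely because $p_1 \geq 2$, i.e.\ $\lambda_1 = \lambda_2$. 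This contradiction shows every $\mu_r$ vanishes, hence $I_\lambda$ is Golod. Your alternative suggestion (identify $I_\lambda$ with a known Golod class such as symmetric shifted or linear-quotients ideals) also does not cover this regime: e.g.\ $\Lambda(I)=\{(2,2,0,0)\}$ is in the Golod regime but is not symmetric shifted, so some genuinely different argument---such as the paper's numerical one---is needed.
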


The resolutions of symmetric monomial ideals were studied extensively in recent years (for example, see \cite{Murai2020AnEH}, \cite{BIERMANN2020312}, \cite{MuraiSatoshi2020Btom}, \cite{RaicuClaudiu2021RoSm}, \cite{GalettoFederico2020Otig}, and \cite{KumarAshok2013MBno}). Our results are, as far as we are aware, the first investigation into resolutions over quotients of such ideals. See \cite{harada2023minimal} for a recent paper showing that generic symmetric principal ideals are Golod.
\par

In the final section, which can be read independently of the others, we return to Question 3. It was answered positively in \cite{aramova} and \cite{peeva} for stable monomial ideals, \cite{p-Borel} and \cite{POPESCU2008132} for certain $p$-Borel fixed monomial ideals, and \cite{dao-destefani-2020} for monomial ideals in three variables or fewer. As a generalization of the first of these,  we demonstrate that ideals with linear quotients which satisfy other technical conditions admit a monomial cycle basis in Koszul homology. This class includes, but is not limited to, the following well-studied ideals. 
\begin{theorem}
Stable monomial ideals, matroidal ideals, and symmetric shifted ideals  admit a monomial cycle basis in Koszul homology.
\end{theorem}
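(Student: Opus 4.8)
The plan is to deduce the theorem from the general criterion established earlier in this section, which identifies a short list of combinatorial conditions on an ideal $I$ with linear quotients under which $H_\bullet(K^R)$ has a monomial cycle basis. Concretely, suppose $I$ has an admissible order $u_1,\dots,u_m$ on its minimal generators with associated sets $\set(u_j)\subseteq\{1,\dots,n\}$, and suppose there is a decomposition function $j\mapsto t(j)$ picking for each $j$ a variable $x_{t(j)}$ dividing $u_j$ with $t(j)\notin\set(u_j)$ and $x_i u_j/x_{t(j)}\in I$ for every $i\in\set(u_j)$; assume also the regularity hypothesis making the iterated mapping cone resolution of $I$ minimal. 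Then the candidates are the monomial cycles $z_{j,\sigma}=\overline{u_j/x_{t(j)}}\;e_{t(j)}\wedge\bigwedge_{i\in\sigma}e_i$ for $\sigma\subseteq\set(u_j)$. These are cycles because, writing $v=u_j/x_{t(j)}$, every term of $\partial z_{j,\sigma}$ other than $\overline{x_{t(j)}v}\,e_\sigma=\overline{u_j}\,e_\sigma=0$ has the form $\pm\,\overline{x_i u_j/x_{t(j)}}\;e_{(\sigma\setminus\{i\})\cup\{t(j)\}}$ with $i\in\sigma\subseteq\set(u_j)$, hence vanishes by hypothesis. Granting the general criterion, for each of the three families I must (a) exhibit an admissible order realizing linear quotients and (b) produce $t(j)$ and verify the displayed containment.

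For stable monomial ideals this recovers the results of \cite{aramova} and \cite{peeva}. Order the generators by degree, breaking ties by reverse-lexicographic order; then $\set(u)=\{1,\dots,m(u)-1\}$ with $m(u)=\max\{i:x_i\mid u\}$, and we set $t(u)=m(u)$. The required containment $x_i u/x_{m(u)}\in I$ for $i<m(u)$ is precisely the defining stability property, and $m(u)\notin\set(u)$ by construction, so the criterion applies and $z_{u,\sigma}=\overline{u/x_{m(u)}}\;e_{m(u)}\wedge\bigwedge_{i\in\sigma}e_i$ is exactly the Eliahou--Kervaire basis element $\{u;\sigma\}$.

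For matroidal ideals I use that the ideal generated by the indicator monomials $u_B$ of the bases $B$ of a matroid $M$ on $\{1,\dots,n\}$ has linear quotients with respect to any order of the bases that becomes lexicographic after fixing a linear order on the ground set (Herzog--Takayama, Conca--Herzog). Here $i\in\set(u_B)$ means $B\cup\{i\}$ contains an earlier basis, equivalently there is $j\in B$ with $(B\cup\{i\})\setminus\{j\}$ a basis preceding $B$; the task is to extract from the (strong) basis-exchange axiom and the chosen order a single ``out'' element $t(B)=j$ that works simultaneously for all admissible ``in'' elements $i$, which then gives $x_i u_B/x_{t(B)}=u_{(B\cup\{i\})\setminus\{t(B)\}}\in I$. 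For symmetric shifted ideals I invoke that they have linear quotients (as shown in the paper introducing them), translate $\set$ and the candidate decomposition function into the partition language of Theorem~\ref{partition conditions}, and verify the containment as an inequality between partitions controlled by the shifting operation.

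The main obstacle is twofold. First is the production of the \emph{uniform} decomposition variable $t(j)$ in the matroidal and symmetric shifted cases: for matroids this needs a careful application of the exchange axioms compatible with the shelling/lexicographic order, and for symmetric shifted ideals it needs bookkeeping of the $\mf S_n$-orbit structure. Second is the linear-independence half of the general criterion, which I would not attempt by hand: instead I would build a comparison map $\phi\colon F_\bullet\to K^R_\bullet$ lifting the identity of $S/I$, where $F_\bullet$ is the iterated mapping cone resolution of $I$ (minimal by the regularity hypothesis) with its basis indexed by the pairs $(u_j,\sigma)$, and show, using the explicit differential of the mapping cone, that $\phi$ carries this basis to $\{z_{j,\sigma}\}$ modulo terms supported on generators earlier in the admissible order. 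Triangularity of that change of basis, together with the count $\#\{z_{j,\sigma}:\text{hom.\ degree }p\}=\sum_j\binom{|\set(u_j)|}{p-1}=\beta_p^S(S/I)=\dim_k H_p(K^R)$, upgrades the $z_{j,\sigma}$ from independent cycles to a basis of Koszul homology. Verifying that triangularity against the mapping cone differential is the technical heart of the argument, and the rest of the work is the per-family combinatorial check in the previous paragraphs.
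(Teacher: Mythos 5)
Your overall architecture matches the paper's: a general criterion for ideals with linear quotients (degree-increasing order, regular decomposition function, plus a uniformly chosen variable $x_\ell$ dividing each generator) producing the candidate cycles $\overline{u/x_\ell}\,e_\ell\wedge e_\sigma$ for $\sigma\subseteq\set(u)$, followed by per-family verification. But there are genuine gaps. First, your third hypothesis is weaker than what the criterion actually requires. You only ask that $x_t u/x_{t(j)}\in I$ for all $t\in\set(u)$; the paper's hypothesis (``nice Koszul lifts'' in Theorem \ref{mon basis}) is that $x_\ell$ divides $x_t u/g(x_t u)$, i.e.\ that the \emph{decomposition function's} generator $g(x_t u)$ divides $x_t u/x_\ell$. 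That stronger form is exactly what makes the explicit lift $\sum_{t\in\sigma}\pm\frac{x_t u}{g(x_t u)x_\ell}\gamma^{g(x_t u)}_{\sigma\setminus t}e_\ell$ have polynomial coefficients, and this lift is the engine of the proof: the zig-zag through the double complex $F\otimes K$ carries the basis elements $\gamma^u_\sigma\otimes 1$ of $H_i(F\otimes k)$ directly onto $\pm[\frac{u}{x_\ell}e_\ell\wedge e_\sigma]$, so a basis goes to a basis and no triangularity or dimension count is needed. With only the containment $x_t u/x_\ell\in I$, the generator dividing $x_t u/x_\ell$ need not be $g(x_t u)$, and your comparison-map-plus-triangularity argument — which you explicitly defer and call the technical heart — is precisely the part that would have to be redone; as written it is a plan, not a proof.

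Second, the family-by-family checks are essentially only done for stable ideals, and there only for your weak condition: stability gives $x_i u/x_{\max(u)}\in I$ immediately, but the divisibility $g(x_t u)\mid x_t u/x_{\max(u)}$ needed by the criterion requires a real argument (the paper proves it by contradiction using the revlex order together with stability). For matroidal ideals you leave the uniform choice of the ``out'' variable as an open task to be extracted from the exchange axiom; the paper resolves it in one line (Proposition \ref{revlex 1 deg}): since the ideal is generated in a single degree and has linear quotients in revlex order, $g(x_t u)$ has the same degree as $u$ and is revlex-smaller, which forces $x_{\max(u)}\mid x_t u/g(x_t u)$ — no exchange-axiom bookkeeping is needed. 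For symmetric shifted ideals you invoke linear quotients from \cite{BIERMANN2020312}, but you say nothing about the regularity of the decomposition function, which is not proved in that reference and is a nontrivial verification carried out in this paper, nor do you verify the divisibility condition (the paper establishes both, with $\ell=m(u)$, by a case analysis on exponents relative to $\lambda_1$ and $m(u)$). Until the corrected criterion is proved and these three verifications are supplied, the argument is incomplete.
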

\par
Matroidal ideals were known to have linear quotients since the introduction of the property in \cite{HerzogTakayama}, and symmetric shifted ideals are a class of symmetric monomial ideals which were introduced in \cite{BIERMANN2020312}, where it was proved that they have linear quotients. Ideals with linear quotients are always Golod, so they necessarily satisfy the conditions of Theorem \ref{necessary cond}. Furthermore, the symmetric shifted ideals satisfy the conditions of Theorem \ref{partition conditions}, which is not hard to show directly.  It would be very interesting to answer Question 3 for ideals which are not necessarily Golod to see if Theorem \ref{necessary cond} would shed any light on whether they are Golod.

\section{Preliminaries}
\label{section prelim}
In this section, we provide background on Koszul homology, particularly its multigraded and multiplicative (differential graded algebra) structures. We will also give an extended example which will be used to illustrate both these structures and the main themes of the paper.

\subsection{The Koszul Complex}
We start with the Koszul complex. Let $k$ be a field and $S = k[x_1,\dots,x_n]$ the polynomial ring with the standard $\Z_{\geq 0}^n$ multigrading. Let $I \subset (x_1,\dots,x_n)^2 \subset S$ be a monomial ideal and $R = S/I$. Let $R^n$ be a free module with basis $e_1,\dots,e_n$ and $K^R$ be the Koszul complex with respect to $x_1,\dots,x_n$:
\[
K^R:  R \xleftarrow{\partial} \bigwedge^1 R^n \xleftarrow{\partial} \bigwedge^2 R^n \xleftarrow{\partial} \dots \xleftarrow{\partial} \bigwedge^n R^n \leftarrow 0
\]
For a subset $\sigma = \{\sigma_1,\dots,\sigma_p\}$ of $[n] = \{1,\dots,n\}$, we will use $e_{\sigma}$ or $e_{\sigma_1 \dots \sigma_p}$ to denote $e_{\sigma_1} \wedge \dots \wedge e_{\sigma_p}$. With this notation, the boundary map of $K^R$ is given by
\[
\partial(\bar f e_{\sigma}) = \sum_{j = 1}^p \sgn(\sigma,\sigma_j) \overline{x_{\sigma_j} f} e_{\sigma \backslash \sigma_j}
\]
where $\bar g$ is the image in $R$ of an element $g$ in $S$, and $\sgn(\sigma,\sigma_j)$ is $-1$ to the power of $\#\{s \in \sigma : s < \sigma_j\}$. The {\bf Koszul homology} of $R$ in degree $p$ is the $k$-vector space $H_p(K^R)$. 
\par
\subsection{Multigrading and Monomial Cycles}
We give $K^R$ a multigraded structure by letting $e_i$ have multidegree $\mathbf e_i$, the $i$th basis element of $\Z^n_{\geq 0}$. That is, an element has multidegree $\bf a$ when it is a linear combination of elements of the form $\bar f e_{\sigma_1 \dots \sigma_p}$ where $f \in S$ is the monomial of multidegree $\mathbf a - \sum_{j=1}^p \mathbf e_{\sigma_j}$, and furthermore $\partial$ preserves this multidegree. Since $I$ is a monomial ideal, its generators are multigraded, so $H_*(K^R)$ inherits the multigrading. To illustrate this, we will compute a multigraded basis in Koszul homology for an ideal which we will use as a recurring example in this section.
\par 
\begin{example}
\label{eg}
Let $J = (x_1 x_3, x_1 x_4, x_2 x_3, x_2 x_4) \subset k[x_1,x_2,x_3,x_4]$. The multigraded minimal free $S$-resolution of $R = S/J$ is of the form

\begin{tikzcd}[row sep=0mm]
 & S(-1,0,-1,0) & S(-1,-1,-1,0) &  \\
 S & \arrow[l] \oplus S(-1,0,0,-1) & \arrow[l] \oplus S(-1,-1,0,-1) & \arrow[l] S(-1,-1,-1,-1) & \arrow[l] 0 \\
 & \oplus S(0,-1,-1,0) & \oplus S(-1,0,-1,-1) & \\
 & \oplus S(0,-1,0,-1) & \oplus S(0,-1,-1,-1) & 
\end{tikzcd}

We can choose a multigraded basis for Koszul homology in each homological degree:

\par
$H_0(K^{S/J}) = \text{span}_k \langle[1]\rangle$
\par
$H_1(K^{S/J}) = \text{span}_k \langle [\bar x_3 e_1],[\bar x_4 e_1],[\bar x_3 e_2], [\bar x_4 e_2] \rangle$
\par 
$H_2(K^{S/J}) = \text{span}_k \langle [\bar x_3 e_{12}],[\bar x_4 e_{12}],[\bar x_1 e_{34}],[\bar x_2 e_{34}] \rangle$
\par 
$H_3(K^{S/J}) = \text{span}_k \langle [\bar x_1 e_{234} - \bar x_2 e_{134}] \rangle$ 

To show that this is in fact a basis, we will use Theorem \ref{main thm}

\end{example}

\par
In general, a monomial cycle $\bar u e_\sigma$ must satisfy $u \in I:(x_{i} : i \in \sigma)$. For it to be non-zero, we need that $u \not \in I$. Theorem \ref{main thm} tells us that for it to represent a non-zero class in Koszul homology, $u$ must avoid the larger ideal $B_I^{n,\sigma}$. As an example in four variables, $[\bar u e_{1}]$ is non-zero if and only if $u \not\in I + (x_2,x_3,x_4)[I:x_1]$. For the ideal $J$ above, $J:x_1 = (x_3,x_4)$ and $(x_2,x_3,x_4)[J:x_1] = (x_2 x_3,x_2 x_4, x_3 x_4, x_3^2, x_4^2)$. Theorem \ref{main thm} thus implies that $[\bar x_3 e_1],[\bar x_4 e_1]$ are non-zero homology classes. Repeating the argument but permuting the indices 1 and 2 gives us that $[\bar x_3 e_2],[\bar x_4 e_2]$ are non-zero homology classes. These classes are all linearly independent, so we verify the proposed multigraded basis for Koszul homology. 
\par 
The boundary ideals can get more complicated. In four variables, if $u \in I:(x_1,x_2)$ is monomial, $[\bar u e_{12}]$ is a non-zero homology class if and only if
\[
u \not \in I + (x_3,x_4)[I:(x_1,x_2)] + (x_3x_4)[I:x_1x_2] \cap ( x_3[I:x_1] \cap x_4[I:x_2] + x_3[I:x_2] \cap x_4[I:x_1] )
\]
Now $J:(x_1,x_2) = (x_3,x_4)$ and $J:(x_1x_2) = I$, so this condition simplifies to  $u \not\in J + (x_3,x_4)^2$. $u = x_3$ and $u = x_4$ are the only monomials satisfying these conditions, so $[\bar x_3 e_{12}],[\bar x_4 e_{12}]$ are non-zero homology classes. Repeating the argument while swapping the roles of 1 and 2 with 3 and 4 shows that $[\bar x_1 e_{34}]$ and $[\bar x_2 e_{34}]$ are non-zero homology classes. In homological degree three, $J:(x_1,x_2,x_3) = J$, so there are no non-zero monomial cycles of the form $\bar u e_{123}$; permuting the variables verifies that there are no non-zero monomial cycles in homological degree three. Thus, $H_3(K^{S/J})$ does not admit a basis of monomial cycles. 
\par
$H_1(K^{S/J})$ and $H_2(K^{S/J})$ do admit a basis consisting entirely of monomial cycles. This is a general phenomenon. For any monomial ideal $I$, the generators of $I$ correspond to basis elements of $H_1(K^R)$ by sending  a minimal monomial generator $u$ of $I$ to $[\frac{u}{x_a} e_a]$ for any choice of $a$ with $x_a$ dividing $u$. So, $H_1(K^R)$ always admits a basis consisting of monomial cycles. If $H_n(K^R) \neq 0$, then a multigraded basis element $[\bar f e_{1\dots n}]$ of degree $\mathbf a$ forces $f$ to have multidegree $\mathbf a - \sum_{i=1}^n \mathbf e_i$, so $f$ can be chosen to be monomial. So, $H_n(K^R)$ has a basis consisting of monomial cycles as well. Dao and De Stefani in \cite{dao-destefani-2020} prove that $H_2(K^R)$ always admits a basis of monomial cycles (they show this for $n = 3$, but the proof is general). We use the same method in Section \ref{proof mon basis} to show that for certain ideals, $H_p(K^R)$ admits a monomial cycle basis for all $p \geq 1$.

\subsection{Multiplication on Homology}
The Koszul complex $K^R$ can be viewed as the exterior algebra of $R^n$, with multiplication given by $(\bar f e_\sigma)(\bar g e_{\tau})$ $ =$ $ \overline{fg} e_{\sigma} \wedge e_{\tau}$. Furthermore, the multiplication satisfies the (graded) Leibniz rule, making $K^R$ a differential graded algebra. This implies that multiplication on $K^R$ descends to multiplication on $H_*(K^R)$. Notice that this product respects multidegree and homological degree. 
\par
For example, take $J$ as above. The homolgy classes $[\bar x_3 e_1]$ and $[\bar x_4 e_2]$ have multidegrees $(1,0,1,0)$ and $(0,1,0,1)$, respectively, and homological degree 1. Their product is $[\overline{x_3 x_4} e_{12}]$, which has multidegree $(1,1,1,1)$ and homological degree 2. Since $H_2(K^{S/J})_{(1,1,1,1)} = 0$, this product must be a boundary. Indeed, $\partial(\bar x_4 e_{123}) = \overline{x_3 x_4} e_{12}$. This can also be verified by Theorem \ref{main thm}, as $x_3 x_4 \in J + (x_3,x_4)^2$.
\par
The multiplicative structure in $H_*(K^R)$ informs us about the structure of the resolution of $k$ over $R$. The most well-studied relationship between the two is the Golod condition. We say $R$ (or $I$) is {\bf Golod} if 
\[
\sum_{j \geq 0} \dim_k \Tor_j^R(k,k) t^j \leq \frac{(1+t)^n}{1 - \sum_{j \geq 1} \dim_k \Tor_j^S(R,k) t^{j+1} }
\]
where $``\leq"$ here is taken coefficientwise. The vanishing of products in $H_{\geq 1}(K^R)$ is a necessary condition for Golodness. In four or fewer variables, it is sufficient as well (the three variable case is well-known and easily proved; for four variables see \cite{katthan}, Thoerem 6.3(4)). 
\par 
Thoerem \ref{main thm} implies conditions in terms of ideal inclusions which determine when products of monomial cycles vanish. To illustrate this, take $R = S/J$ as above. A product of monomial cycles $\bar u e_1$ and $\bar v e_2$ is again a monomial cycle $\overline{uv} e_{12}$. By Theorem \ref{main thm}, this product is non-zero if and only if $uv \in J + (x_3,x_4)^2$. Since $[J:x_1][J:x_2] = (x_3,x_4)^2$, the product is guaranteed to be the zero class in homology. In fact, $J$ satisfies all of the inclusions of Theorem \ref{four var}, so $J$ is Golod.

\section{Boundary Ideals}
\label{section mon cycles}
In this section we define boundary ideals. The definition relies on the circuits of a particular matroid. A {\bf matroid} $\mathcal{M}$ is a pair $(A,\mathcal{C})$, where $A$ is a set and $\mathcal{C}$ is a subset of $2^A$, called the set of {\bf circuits} of $\mathcal{M}$, satisfying
\begin{itemize}
\item $\emptyset \not\in \mathcal{C}$
\item For $C_1, C_2 \in \mathcal{C}$ and $C_1 \subset C_2$, then $C_1 = C_2$.
\item For $C_1,C_2 \in \mathcal{C}$   with $C_1 \neq C_2$ and $c \in C_1 \cap C_2$, then there is a $C_3 \in \mathcal{C}$ such that $C_3 \subset C_1 \cup C_2 \backslash \{c\}$. 
\end{itemize}
The prototypical example of a matroid is where $A$ is a collection of vectors in a vector space and $\mathcal{C}$ is the collection of minimal linearly dependent subsets.
\par 
Let $\binom{[n]}{p}$ denote the collection of subsets of $[n]$ of size $p$. For any $p < n$, we define a matrix $M(n,p)$ whose rows are indexed by $\binom{[n]}{p}$ and whose columns are indexed by $\binom{[n]}{p+1}$. The entry $M(n,p)_{\sigma,\tau}$ is $0$ if $\sigma \not\subset \tau$ and $\sgn(\tau,\tau \backslash \sigma)$ otherwise. We define $\mathcal{M}^n_p[k] = (\binom{[n]}{p},\mathcal{C}^n_p[k])$ to be the matroid whose circuits are the minimal sets of dependent rows of $M(n,p)$ over the field $k$.
\par
Fix a $\sigma$ in $\binom{[n]}{p}$ and a monomial ideal $I \subset k[x_1,\dots,x_n]$. For any set $A \subset [n]$, let $x_A := \prod_{a \in A} x_a$. For any set $\sigma' \in \binom{[n]}{p}$, let $I^\sigma_{\sigma'}$ denote the ideal $x_{\sigma' \backslash \sigma} [I:x_{\sigma \backslash \sigma'}]$. The {\bf boundary ideal} of $I$ with respect to $n$ and $\sigma$ is
\[
B^{n,\sigma}_I := \bigcap_{\substack{C \in \mathcal{C}^n_p[k] \\ \text{s.t. } \sigma \in C}} \sum_{\sigma' \in C} I^\sigma_{\sigma'}
\]
and we define $B^{n,[n]}_I$ to be $I$.

\begin{example}
\label{4var bound ideals}
We compute the boundary ideals in four variables from the definition.
\[
M(4,3) = 
\begin{blockarray}{cc}
 & 1234 \\
\begin{block}{c(c)}
123 & -1  \\
124 & 1   \\
134 & -1  \\
234 & 1  \\
\end{block}
\end{blockarray}
 \]
where $ijk\ell$ denotes the set $\{i,j,k,\ell\}$. The minimal sets of linearly dependent rows are just the two element sets. Thus,
\[
B^{4,\{1,2,3\}}_I = (I + x_4[I:x_3]) \cap (I + x_4[I:x_2]) \cap (I + x_4[I:x_1]) = I + x_4[I:(x_1,x_2,x_3)]
\]
\par
$M(4,2)$ is the matrix
\[
\begin{blockarray}{ccccc}
 & 123 & 124 & 134 & 234 \\
\begin{block}{c(cccc)}
12 & 1 & 1 & 0 & 0  \\
13 & -1 & 0 & 1 & 0  \\
14 & 0 & -1 & -1 & 0  \\
23 & 1 & 0 & 0 & 1  \\
24 & 0 & 1 & 0 & -1  \\
34 & 0 & 0 & 1 & 1 \\
\end{block}
\end{blockarray}
 \]
One can check that the minimal linearly dependent sets of rows which include the row $12$ are $\{12,13,14\}$, $\{12,23,24\}$, $\{12,13,24,34\}$, and $\{12,14,23,34\}$, regardless of the characteristic of $k$. The boundary ideal is therefore the intersection of four ideals:
\[
B^{4,\{1,2\}}_I = (I + x_3 [I:x_2] + x_4 [I:x_2]) \cap (I + x_3 [I:x_1] + x_4 [I:x_1]) \cap
\]
\[
 (I + x_3 [I: x_2] + x_4 [I:x_1] + x_3 x_4 [I:x_1 x_2]) \cap (I + x_4 [I: x_2] + x_3 [I:x_1] + x_3 x_4 [I: x_1 x_2])
\]
\[
= I + (x_3,x_4)[I:(x_1,x_2)] + (x_3x_4)[I:x_1x_2] \cap ( x_3[I:x_1] \cap x_4[I:x_2] + x_3[I:x_2] \cap x_4[I:x_1] )
\]
$M(4,1)$ is the matrix
\[
\begin{blockarray}{ccccccc}
 & 12 & 13 & 14 & 23 & 24 & 34 \\
\begin{block}{c(cccccc)}
1 & -1 & -1 & -1 & 0 & 0 & 0  \\
2 & 1 & 0 & 0 & -1 & -1 & 0  \\
3 & 0 & 1 & 0 & 1 & 0 & -1  \\
4 & 0 & 0 & 1 & 0 & 1 & 1  \\
\end{block}
\end{blockarray}
 \]
The only set of dependent rows is the set of all rows, so
\[
B^{4,\{1\}}_I = I + x_2[I:x_1] + x_3[I:x_1] + x_4[I:x_1] = I + (x_2,x_3,x_4)[I:x_1]
\]
\end{example}

\subsection{Full Simplicial Matroids and the Structure of Boundary Ideals}
The matroid $\mathcal{M}^n_p[k]$ is isomorphic to a well-studied matroid called the full simplicial matroid. We defer to \cite{cordovillindstrom} for background on full simplicial matroids. In this subsection, we will translate theorems about the structure of full simplicial matroids into facts about the structure of boundary ideals. We give explicit descriptions of $B_I^{n,\sigma}$ for $|\sigma| = 1,n-1$. Using facts about full simplicial matroids and (co)graphic matroids, we give explicit descriptions of $\mathcal{C}^n_p[k]$ for $p = 2,n-2$. These descriptions will not depend on the characteristic; however, when $3 \leq |\sigma| \leq n-3$, we show that $B^{n,\sigma}_I$ does depend on the characteristic. 
\par
The {\bf full simplicial matroid} $S^n_p[k]$ is the matroid with base set $\binom{[n]}{p}$ and circuits the minimal linearly dependent rows of (in our notation) $M(n,p-1)^t$  \cite[Proposition 6.2.5]{cordovillindstrom} \footnote{While it is standard to use columns for matroid representations, we choose to use rows for the sake of proving Theorem \ref{main thm}. }. 
\begin{proposition}
\label{isos}
$\mathcal{M}^n_p[k]$ is isomorphic to the matroid dual $S^n_p[k]^*$ and also to $S^n_{n-p}[k]$.
\end{proposition}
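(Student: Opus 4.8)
The plan is to exhibit both isomorphisms via matroid duality, using the combinatorial description of $\mathcal{M}^n_p[k]$ in terms of the matrix $M(n,p)$ and the description of $S^n_p[k]$ in terms of $M(n,p-1)^t$. First I would recall the standard fact that a matroid represented (over $k$) by the rows of a matrix $A$ with full column rank has as its dual the matroid represented by the rows of any matrix whose row space is the orthogonal complement of the row space of $A$; equivalently, if $\ker$ and $\operatorname{coker}$ are taken appropriately, the circuits of $\mathcal{M}(A)$ and $\mathcal{M}(A^*)$ are governed by the null space versus the column space. Concretely, $\mathcal{M}^n_p[k]$ is defined as the matroid on $\binom{[n]}{p}$ whose circuits are the minimal dependent \emph{rows} of $M(n,p)$, i.e.\ the matroid of the row vectors of $M(n,p)$; and $S^n_p[k]$ is the matroid on $\binom{[n]}{p}$ whose circuits are the minimal dependent rows of $M(n,p-1)^t$, i.e.\ the matroid of the row vectors of $M(n,p-1)^t$, which are the column vectors of $M(n,p-1)$.

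The key observation that ties these together is that $M(n,p)$ and $M(n,p-1)$ are consecutive differentials in the (co)chain complex of the full simplex on $[n]$: up to sign conventions, $M(n,p-1) \cdot M(n,p) = 0$, and in fact over any field the row space of $M(n,p)$ and the column space of $M(n,p-1)$ are \emph{complementary orthogonal} subspaces of $k^{\binom{[n]}{p}}$ — this is exactly the statement that simplicial (co)homology of a simplex is trivial, so that $\operatorname{rank} M(n,p) + \operatorname{rank} M(n,p-1) = \binom{n}{p}$ and $\operatorname{im}(M(n,p-1)) = \ker(M(n,p)^t)^{\perp}$, with the pairing being the standard dot product on $k^{\binom{[n]}{p}}$. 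Granting this, the matroid of the rows of $M(n,p)$ is dual to the matroid of a matrix whose row space is the orthogonal complement of $\operatorname{rowspace}(M(n,p))$, namely $\operatorname{colspace}(M(n,p-1))$; but the matroid on the columns of $M(n,p-1)$ is precisely $S^n_p[k]$. Hence $\mathcal{M}^n_p[k] = \left(S^n_p[k]\right)^*$. The second isomorphism $S^n_p[k]^* \cong S^n_{n-p}[k]$ is a known property of full simplicial matroids recorded in \cite{cordovillindstrom} — it comes from the complementation map $\sigma \mapsto [n]\setminus\sigma$ on $\binom{[n]}{p} \to \binom{[n]}{n-p}$, which induces (up to a diagonal sign change, hence a matroid isomorphism) an identification of $M(n,p)$ with $M(n,n-p-1)^t$ — so I would cite it and, if needed, sketch the sign bookkeeping.

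So the steps, in order, are: (1) set up the language of representable matroids and their duals in terms of row spaces and orthogonal complements; (2) prove (or cite, since it is the triviality of the reduced homology of a simplex) that $\operatorname{rowspace}(M(n,p))$ and $\operatorname{colspace}(M(n,p-1))$ are orthogonal complements in $k^{\binom{[n]}{p}}$; (3) conclude $\mathcal{M}^n_p[k] \cong S^n_p[k]^*$; (4) invoke the complementation isomorphism $S^n_p[k]^* \cong S^n_{n-p}[k]$ from the simplicial matroid literature. The main obstacle I anticipate is step (2) together with the sign conventions: one must verify that the incidence-sign function $\sgn(\tau,\tau\setminus\sigma)$ used to define $M(n,p)$ really assembles into a genuine chain complex (so that the product of consecutive matrices vanishes) and that the ranks add up correctly over an arbitrary field $k$ — this last point is what makes the isomorphism characteristic-independent even though, as the paper later notes, individual boundary ideals are not. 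The complementation step is essentially bookkeeping once one trusts the cited simplicial-matroid duality, so I would keep that brief.
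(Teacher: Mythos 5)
Your argument is correct in substance, but it takes a different route from the paper on the first isomorphism: the paper simply cites \cite[Proposition 6.2.7]{cordovillindstrom}, which states outright that the circuits of $S^n_p[k]^*$ are the minimal dependent rows of $M(n,p)$, and then cites \cite[Proposition 11.4]{cr} for $S^n_p[k]^*\cong S^n_{n-p}[k]$; you instead reprove the first fact from scratch, observing that $M(n,p-1)$ and $M(n,p)$ are consecutive boundary matrices of the full simplex, so $M(n,p-1)\,M(n,p)=0$ and acyclicity of the simplex gives $\rk M(n,p)+\rk M(n,p-1)=\binom{n}{p}$, whence the two relevant subspaces of $k^{\binom{[n]}{p}}$ are orthogonal complements and representable-matroid duality applies. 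That self-contained derivation is a genuine gain (it makes the characteristic-independence of the duality transparent), at the cost of redoing a standard fact the paper outsources. One bookkeeping caveat: the orthogonal complementarity you need is between the \emph{column} space of $M(n,p)$ and the \emph{row} space of $M(n,p-1)$, both subspaces of $k^{\binom{[n]}{p}}$ (equivalently, the row spaces of $M(n,p)^t$ and of $M(n,p-1)$, the two matrices whose columns are indexed by the common ground set $\binom{[n]}{p}$); as written you pair the row space of $M(n,p)$ with the column space of $M(n,p-1)$, which live in $k^{\binom{[n]}{p+1}}$ and $k^{\binom{[n]}{p-1}}$ respectively, so the statement does not parse until the transposes are straightened out. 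This is a slip of labels rather than of ideas, and the rest of your outline, including citing the complementation isomorphism exactly as the paper does, goes through.
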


\begin{proof}
The circuits of the dual $S^n_p[k]^*$ are the minimal dependent rows of $M(n,p)$ \cite[Proposition 6.2.7]{cordovillindstrom}, which define $\mathcal{M}^n_p[k]$. Furthermore, $S^n_p[k]^*$ is isomorphic to $S^n_{n-p}[k]$ by taking complements of sets \cite[Proposition 11.4]{cr}.  
\end{proof}

\begin{proposition}
\[
B^{n,\{1\}}_I = I + (x_2,\dots,x_n)[I:x_1]
\]
\end{proposition}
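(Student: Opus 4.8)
The plan is to reduce the statement to an explicit description of the circuits of the matroid $\mathcal{M}^n_1[k]$ and then unwind the definition of the boundary ideal. First I would identify the matrix $M(n,1)$: its rows are indexed by the singletons $\{1\},\dots,\{n\}$ and its columns by the two-element sets $\{j,k\}$, and a short sign check (matching the displayed matrix $M(4,1)$ in Example \ref{4var bound ideals}) shows that the column indexed by $\{j,k\}$ with $j<k$ has entry $-1$ in row $\{j\}$, $+1$ in row $\{k\}$, and $0$ in every other row. Thus $M(n,1)$ is, up to sign, the oriented vertex--edge incidence matrix of the complete graph $K_n$.

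Next I would determine its minimal dependent sets of rows. Because each column contains exactly one $+1$ and one $-1$, the sum of all $n$ rows is the zero vector, so the set of all singletons is dependent. Conversely, if $T$ is a proper subset of $\binom{[n]}{1}$ and $\sum_{\{i\}\in T} c_i\,(\text{row }\{i\}) = 0$, then for each $j$ with $\{j\}\in T$ one can choose $k$ with $\{k\}\notin T$ (possible since $T \ne \binom{[n]}{1}$), and the column indexed by $\{j,k\}$ receives a contribution only from row $\{j\}$, forcing $c_j = 0$; hence $T$ is independent. Therefore $\mathcal{C}^n_1[k] = \big\{\{\{1\},\dots,\{n\}\}\big\}$ consists of a single circuit, independent of $\operatorname{char} k$. (Alternatively, Proposition \ref{isos} identifies $\mathcal{M}^n_1[k]$ with $S^n_{n-1}[k]$, which is the uniform matroid $U_{n-1,n}$.)

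Finally I would substitute into the definition. With $p=1$ and $\sigma = \{1\}$, the unique circuit $C = \{\{1\},\dots,\{n\}\}$ contains $\sigma$, so $B^{n,\{1\}}_I = \sum_{j=1}^n I^{\{1\}}_{\{j\}}$. Here $I^{\{1\}}_{\{1\}} = x_\emptyset[I:x_\emptyset] = I$, and for $j \ne 1$ one has $I^{\{1\}}_{\{j\}} = x_{\{j\}\setminus\{1\}}[I:x_{\{1\}\setminus\{j\}}] = x_j[I:x_1]$, so that $B^{n,\{1\}}_I = I + \sum_{j=2}^n x_j[I:x_1] = I + (x_2,\dots,x_n)[I:x_1]$, as claimed. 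I expect no real obstacle here: the only points needing care are the sign bookkeeping in identifying $M(n,1)$ and the verification that no proper set of rows is dependent, i.e.\ that the incidence matrix of the connected graph $K_n$ has rank $n-1$.
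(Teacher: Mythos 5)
Your proof is correct and follows essentially the same route as the paper: analyze $M(n,1)$ directly, observe that each column has exactly two nonzero entries of opposite sign so the full set of rows is the unique circuit (the paper only checks candidates containing row $\{1\}$, while you verify independence of all proper subsets), and then sum the ideals $I^{\{1\}}_{\{j\}}$ to get $I + (x_2,\dots,x_n)[I:x_1]$. The incidence-matrix/$U_{n-1,n}$ remarks are accurate but not needed.
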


\begin{proof}

$M(n,1)_{1,1i} = -1$, and, $M(n,1)_{j,1i} \neq 0$ if and only if $j = i$, in which case $M(n,1)_{i,1i} = 1$. So the only candidate for a set of linearly dependent rows is the entire set $\binom{[n]}{1}$. Indeed, each column of $M(n,1)$ has two non-zero entries, each with opposite signs, so the sum of all rows is zero. We conclude that the boundary ideal is the sum of ideals
\[
I + x_2[I:x_1] + \dots + x_n[I:x_1] = I + (x_2,\dots,x_n)[I:x_1]
\]
\end{proof}

\begin{proposition}
\[
B^{n,\{1,\dots,n-1\}}_I = I + x_n[I:(x_1,\dots,x_{n-1})]
\]
\end{proposition}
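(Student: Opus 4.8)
The plan is to read off the matroid $\mathcal{M}^n_{n-1}[k]$ directly from its defining matrix, list the circuits through $\sigma=\{1,\dots,n-1\}$, and then simplify the resulting intersection of sums of ideals by exploiting that every ideal involved is monomial.

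First I would note that $M(n,n-1)$ is a single-column matrix: its column is indexed by the unique set $[n]\in\binom{[n]}{n}$, its rows by $\binom{[n]}{n-1}$, and for $\sigma'=[n]\backslash\{j\}$ the entry is $M(n,n-1)_{\sigma',[n]}=\sgn([n],j)=(-1)^{j-1}\neq 0$. Since every row is a nonzero vector in the one-dimensional space $k$, no single row is linearly dependent while every pair of rows is, and minimally so; hence the circuits of $\mathcal{M}^n_{n-1}[k]$ are exactly the two-element subsets of $\binom{[n]}{n-1}$, regardless of the characteristic of $k$. (Equivalently, via Proposition~\ref{isos}, $\mathcal{M}^n_{n-1}[k]\cong S^n_1[k]$, whose circuits are again the pairs.) Consequently the circuits containing $\sigma=[n]\backslash\{n\}$ are precisely the pairs $C_j=\{\sigma,\,[n]\backslash\{j\}\}$ for $j=1,\dots,n-1$.

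Next I would compute the ideal attached to each such circuit. Writing $\sigma'=[n]\backslash\{j\}$, one has $\sigma'\backslash\sigma=\{n\}$ and $\sigma\backslash\sigma'=\{j\}$, so $I^\sigma_{\sigma'}=x_n[I:x_j]$, while $I^\sigma_\sigma=I$ by definition. Hence $\sum_{\sigma''\in C_j}I^\sigma_{\sigma''}=I+x_n[I:x_j]$, and therefore
\[
B^{n,\{1,\dots,n-1\}}_I=\bigcap_{j=1}^{n-1}\bigl(I+x_n[I:x_j]\bigr).
\]
It then remains to identify the right-hand side with $I+x_n[I:(x_1,\dots,x_{n-1})]=I+x_n\bigl(\bigcap_{j=1}^{n-1}[I:x_j]\bigr)$. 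The containment $\supseteq$ is immediate, since $I$ and $x_n\bigl(\bigcap_j[I:x_j]\bigr)\subseteq x_n[I:x_j]$ both lie in the $j$-th factor. For $\subseteq$ I would argue on monomials: all ideals in sight are monomial, and a monomial lies in a finite sum of monomial ideals iff it lies in one of the summands. So a monomial $m$ in the intersection either lies in $I$, and we are done, or for every $j$ it lies in $x_n[I:x_j]$, forcing $x_n\mid m$ and $m/x_n\in[I:x_j]$; in the latter case $m/x_n\in\bigcap_{j=1}^{n-1}[I:x_j]=[I:(x_1,\dots,x_{n-1})]$, so $m\in x_n[I:(x_1,\dots,x_{n-1})]$, as required.

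The only step that requires genuine care is this last distributivity-type identity, which fails for general ideals and really uses the monomial structure together with the special shape $I+x_n(\cdot)$ of the factors; the rest is routine sign-and-complement bookkeeping, plus the (elementary) analysis of the one-column matrix $M(n,n-1)$.
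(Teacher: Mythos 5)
Your proof is correct and follows the same route as the paper: read off from the one-column matrix $M(n,n-1)$ that the circuits are the two-element subsets of $\binom{[n]}{n-1}$, so the boundary ideal is $\bigcap_{j=1}^{n-1}\bigl(I+x_n[I:x_j]\bigr)$, which equals $I+x_n[I:(x_1,\dots,x_{n-1})]$. The only difference is that you spell out the final intersection identity via the monomial-ideal membership argument, a step the paper states without further justification.
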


\begin{proof}
 The matrix $M(n,n-1)$ is a one-column matrix with a non-zero entry in each row. Thus, the circuits of $\mathcal{M}^n_{n-1}[k]$ are the two-element subsets of $\binom{[n]}{n-1}$. The boundary ideal is then the intersection of $n-1$ ideals
\[
B^{n,\{1,\dots,n-1\}}_I = \bigcap_{i = 1}^{n-1} (I + x_n [I : x_i]) = I + x_n [I:(x_1,\dots,x_{n-1})]
\]

\end{proof}

It is easy to prove that the circuits of $\mathcal{M}^n_p[k]$ containing $\pi(\sigma)$ for some permutation $\pi$ of $[n]$ are just the permutations under $\pi$ of the circuits containing $\sigma$. So, the boundary ideals for any $\sigma$ with  $|\sigma| =1,n-1$ can be calculated by permuting the indices of the previous propositions. 
\par
We can leverage Thoerem \ref{main thm} to show that certain ideals, which correspond to boundaries of monomials, are contained in every boundary ideal.

\begin{proposition}
\label{mon bound ideal}
For $|\sigma| < n$, $I + (x_j : j \not \in \sigma)[I:(x_i : i \in \sigma)] \subset B^{n,\sigma}_I$. 

\end{proposition}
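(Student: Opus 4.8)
The plan is to reduce the inclusion to Theorem \ref{main thm}. Since $B^{n,\sigma}_I$ is an ideal, it suffices to show that every monomial $u$ in $I + (x_j : j\notin\sigma)[I:(x_i : i\in\sigma)]$ lies in $B^{n,\sigma}_I$; by Theorem \ref{main thm} this will follow once we check that (i) $\bar u e_\sigma$ is a cycle in $K^R$ and (ii) it is a boundary.

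For (i) I would first record the elementary containment $I + (x_j : j\notin\sigma)[I:(x_i : i\in\sigma)] \subseteq [I:(x_i : i\in\sigma)]$: the summand $I$ sits inside the colon ideal, and if a monomial $w$ lies in $[I:(x_i:i\in\sigma)]$ then so does $x_j w$ and hence every monomial multiple of it. Thus any such $u$ satisfies $x_{\sigma_\ell}u\in I$ for all $\sigma_\ell\in\sigma$, so $\partial(\bar u e_\sigma)=0$ in $K^R$. For (ii) it is enough, because the ideal is monomial, to treat a monomial generator $u$. If $u\in I$ then $\bar u e_\sigma = 0$ is a boundary (equivalently, $I=I^\sigma_\sigma\subseteq B^{n,\sigma}_I$ is immediate from the definition, since $\sigma$ lies in every circuit indexing the defining intersection). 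Otherwise $u = x_j w$ with $j\notin\sigma$ and $w$ a monomial in $[I:(x_i:i\in\sigma)]$, and the key point is the computation
\[
\partial(\bar w\, e_{\sigma\cup\{j\}}) \;=\; \sgn(\sigma\cup\{j\},\,j)\,\overline{x_j w}\,e_\sigma \;=\; \pm\,\bar u\, e_\sigma ,
\]
where every other term $\sgn(\sigma\cup\{j\},\sigma_\ell)\,\overline{x_{\sigma_\ell}w}\,e_{(\sigma\cup\{j\})\backslash\sigma_\ell}$ with $\sigma_\ell\in\sigma$ drops out because $w\in[I:x_{\sigma_\ell}]$ forces $\overline{x_{\sigma_\ell}w}=0$ in $R$. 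Hence $\bar u e_\sigma$ is a boundary and Theorem \ref{main thm} yields $u\in B^{n,\sigma}_I$.

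I do not expect a real obstacle; the only care needed is verifying that $\bar u e_\sigma$ is a cycle so that Theorem \ref{main thm} applies, which is precisely what step (i) accomplishes. A more hands-on alternative would work directly from the definition of $B^{n,\sigma}_I$ and try to show $x_j[I:(x_i:i\in\sigma)]\subseteq\sum_{\sigma'\in C}I^\sigma_{\sigma'}$ for each circuit $C\ni\sigma$ by exhibiting a $\sigma'\in C$ of the form $(\sigma\backslash\{a\})\cup\{j\}$; but guaranteeing that such a $\sigma'$ always occurs in $C$ is a nontrivial statement about full simplicial matroids, so passing through Theorem \ref{main thm} is the cleaner route.
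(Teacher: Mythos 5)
Your argument is correct and is essentially the paper's proof: you write $u = x_j w$ with $w \in [I:(x_i : i \in \sigma)]$, observe that $\partial(\bar w\, e_{\sigma\cup\{j\}}) = \pm\,\bar u\, e_\sigma$ because the terms $\overline{x_i w}\,e_{(\sigma\cup\{j\})\setminus i}$ vanish in $R$, and conclude via Theorem \ref{main thm}, exactly as in the paper (which applies $\partial$ to $\sgn(\sigma\cup\ell,\ell)\frac{\bar u}{x_\ell} e_{\sigma\cup\ell}$). Your extra checks --- that $\bar u e_\sigma$ is indeed a cycle and that the summand $I$ lands in $B^{n,\sigma}_I$ directly from the definition --- are fine and only make explicit what the paper leaves implicit.
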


\begin{proof}
Suppose $u$ is a monomial such that $\bar u e_{\sigma}$ is a monomial cycle and $u \in (x_j : j \not\in \sigma)[I:(x_i : i \in \sigma)]$.  Choosing $\ell \not \in \sigma$, we can look at the boundary of a ``monomial" element:
\[
\partial(\sgn(\sigma \cup \ell, \ell) \frac{\bar u}{x_\ell} e_{\sigma \cup \ell}) = \bar u e_{\sigma} + \sum_{i \in \sigma} \sgn(\sigma \cup \ell,\ell) \sgn(\sigma,i) \frac{\overline{x_i u}}{x_\ell} e_{\sigma \backslash i \cup \ell}
\]
By assumption, all but possibly the first term of the sum are zero in $R$. Regardless, $\bar u e_{\sigma}$ is a boundary and, by Theorem \ref{main thm}, $u \in B_I^{n,\sigma}$.
\end{proof}

\begin{example}
$B^{4,\sigma}_I$ with $|\sigma| = 2$ is the first boundary ideal where $I + (x_j : j \not \in \sigma)[I:(x_i : i \in \sigma)] \neq B^{n,\sigma}_I$. For example, when $I = (x_1 x_2, x_1 x_4, x_2 x_3)$, then $u = x_3 x_4$ is in $x_3 x_4[I:x_1 x_2] \cap x_3[I:x_1] \cap  x_4[I:x_2] \subset B^{4,\{1,2\}}_I$, so $\bar u e_{12}$ a boundary. An example of a preimage under the boundary map is $\bar x_4 e_{123} + \bar x_2 e_{134}$. However, $\partial(\bar x_4 e_{123}) \neq \overline{x_3 x_4} e_{12} \neq \partial(\bar x_3 e_{124})$, and $\bar u e_{12}$ can't be the boundary of any other monomial element in $K^R_3$ for multidegree reasons.
\end{example}

\begin{proposition}
Let $\mathcal{C}(K_n)$ denote the set of circuits (in the sense of graphs) of the complete graph on $n$ vertices. Then $\mathcal{C}^n_{n-2}[k] = \{ \{[n] \backslash e : e \in C\}: C \in \mathcal{C}(K_n)\}$.  
\end{proposition}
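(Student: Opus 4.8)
The plan is to identify $\mathcal{C}^n_{n-2}[k]$ with the cographic matroid of $K_n$ and then invoke the classical description of cocircuits of a graph as minimal edge cuts, or equivalently circuits of the planar/cographic dual. First I would use Proposition~\ref{isos}: $\mathcal{M}^n_{n-2}[k] \cong S^n_{n-(n-2)}[k] = S^n_2[k]$, and the full simplicial matroid $S^n_2[k]$ is exactly the \emph{graphic matroid} of the complete graph $K_n$ — its ground set $\binom{[n]}{2}$ is the edge set of $K_n$, and (over any field) its circuits are the edge sets of graph cycles. This identification is standard (see \cite{cordovillindstrom}, and note the matrix $M(n,1)^t$ representing $S^n_2[k]$ is, up to signs, the incidence matrix of $K_n$, whose minimal dependencies are precisely cycles; this is why the statement is characteristic-free). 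Combined with the other half of Proposition~\ref{isos}, namely $\mathcal{M}^n_{n-2}[k] \cong S^n_{2}[k]^*$... wait — more directly, $\mathcal{C}^n_{n-2}[k]$ are the circuits of $\mathcal{M}^n_{n-2}[k]$, which by Proposition~\ref{isos} equals $S^n_2[k]^*$, the \emph{dual} of the graphic matroid of $K_n$, i.e. the cographic matroid; its circuits are the cocircuits of $K_n$, which are the minimal edge cuts of $K_n$.

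The second step is the combinatorial heart: translating the index bijection. The isomorphism $S^n_2[k]^* \cong S^n_{n-2}[k]$ of Proposition~\ref{isos} is realized by $e \mapsto [n]\setminus e$ for $e \in \binom{[n]}{2}$. So I would show that, under this bijection, minimal edge cuts of $K_n$ correspond exactly to the sets $\{[n]\setminus e : e \in C\}$ for $C$ a cycle of $K_n$. For the complete graph this is a clean fact: a minimal edge cut of $K_n$ is the set of all edges between a vertex $v$ and the rest — no, that is only true for the \emph{connected} minimal cuts when... actually for $K_n$ the minimal edge cuts (cocircuits) are precisely the "stars" $\{vw : w \neq v\}$ together with more complex bipartition cuts; but the cographic matroid circuits are minimal \emph{bonds}, and I want to match these against complements-of-cycles. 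The cleanest route is to bypass cuts entirely: use that in a graphic matroid $M(G)$, a set $D$ of edges is a cocircuit iff it is a minimal set meeting every spanning tree, and then directly verify $\{[n]\setminus e : e \in C\}$ is such a minimal set when $C$ is a cycle, using that $C$ meets every spanning tree of the "complement" structure — I will instead argue at the matrix level, matching $M(n,n-2)$ columns/rows against the incidence matrix of $K_n$ and reading off that the row dependencies of $M(n,n-2)$ are indexed by complements of cycles.

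Concretely, the cleanest self-contained argument I would write: by Proposition~\ref{isos}, $\mathcal{C}^n_{n-2}[k]$ is the set of circuits of $S^n_2[k]$ transported along $e \mapsto [n]\setminus e$. The circuits of $S^n_2[k]$ are the cycles of $K_n$ by \cite[Ch.~6]{cordovillindstrom} (or direct incidence-matrix computation, valid in every characteristic). Therefore $\mathcal{C}^n_{n-2}[k] = \{\{[n]\setminus e : e \in C\} : C \in \mathcal{C}(K_n)\}$, which is exactly the claim. The one point requiring care — and which I expect to be the main obstacle — is pinning down \emph{which} of the two isomorphisms in Proposition~\ref{isos} to use and tracking the index relabeling precisely, since $\mathcal{M}^n_{n-2}[k]$ is simultaneously $S^n_2[k]^*$ and $S^n_2[k]$-with-relabeled-ground-set, and one must not conflate "dual matroid" (cographic) with "dual indexing" (complementation). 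Once it is clear that the relevant isomorphism $\mathcal{M}^n_{n-2}[k] \cong S^n_2[k]$ sends $\sigma \in \binom{[n]}{n-2}$ to $[n]\setminus\sigma \in \binom{[n]}{2}$ and carries circuits to circuits, and that circuits of $S^n_2[k]$ are graph cycles of $K_n$, the statement follows immediately; I would also remark that this matches the earlier $n=4$ computation, where $\mathcal{C}^4_2[k]$ was described via $2$-colored graphs on $4$ vertices (the duality $S^4_2 \cong S^4_2$ making that case self-dual).
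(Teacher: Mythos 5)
Your final, settled argument is correct and is essentially the paper's own proof: Proposition \ref{isos} gives $\mathcal{M}^n_{n-2}[k] \cong S^n_2[k]$ via complementation of index sets, and $S^n_2[k]$ is the graphic matroid of $K_n$ (with $M(n,1)^t$ as incidence matrix, hence characteristic-free), so its circuits are exactly the graph cycles and transporting them along $e \mapsto [n]\setminus e$ yields the claim. You should simply delete the detour asserting $\mathcal{M}^n_{n-2}[k] \cong S^n_2[k]^*$ (cographic, bonds/minimal cuts): the dual statement in Proposition \ref{isos} is $\mathcal{M}^n_{n-2}[k] \cong S^n_{n-2}[k]^*$, and the bond description you reached for there is the content of the companion proposition about $\mathcal{C}^n_2[k]$, not this one.
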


\begin{proof}
By Proposition \ref{isos}, $\mathcal{M}^n_{n-2}[k]$ is isomorphic to $S^n_2[k]$ by taking complements. Now $S^n_2[k]$ is a graphic matroid (with $M(n,1)^t$ the incidence matrix) corresponding to the complete graph $K_n$. The circuits of graphic matroids are simply the circuits (in the graph sense) of the graph.
\end{proof}

\begin{proposition}
 $\mathcal{C}^n_2[k]$ is the set of bonds of the complete graph $K_n$. That is, $\mathcal{C}^n_2[k]$ is the set of all $E(A,B)$ for $A,B$ a partition of $[n]$, where $E(A,B)$ is the set of all edges with one vertex in $A$ and the other in $B$.
\end{proposition}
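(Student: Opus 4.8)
The plan is to run the same duality argument that was used for $\mathcal{C}^n_{n-2}[k]$, but on the opposite side of the isomorphism in Proposition \ref{isos}. By that proposition, $\mathcal{M}^n_2[k]$ is isomorphic to $S^n_2[k]^*$, and $S^n_2[k]$ is the graphic matroid of the complete graph $K_n$: its representing matrix $M(n,1)^t$ is precisely the (signed) vertex--edge incidence matrix of $K_n$, with rows indexed by the vertices $[n]$ and columns by the edges $\binom{[n]}{2}$. Hence the circuits of $\mathcal{M}^n_2[k]$ are, by definition, the cocircuits of the graphic matroid $M(K_n)$, i.e.\ the circuits of its dual (the cographic matroid of $K_n$).

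Next I would invoke the standard fact that the cocircuits of a graphic matroid $M(G)$ are exactly the bonds of $G$ (the minimal nonempty edge cuts); see \cite{cordovillindstrom} or any matroid reference. Combined with the previous paragraph this already yields $\mathcal{C}^n_2[k] = \{\text{bonds of } K_n\}$, with no dependence on the characteristic of $k$, since $S^n_2[k]$ is graphic and therefore representable over every field.

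It then remains to identify the bonds of $K_n$ explicitly. The general fact is that a set of edges of a connected graph $G$ is a bond if and only if it has the form $E(A,B)$ for a partition $V(G) = A \sqcup B$ into two nonempty parts with both $G[A]$ and $G[B]$ connected. Since every induced subgraph of $K_n$ is again complete, hence connected, the condition on $G[A],G[B]$ is automatic: every partition $[n] = A \sqcup B$ into nonempty parts produces a bond $E(A,B)$, and conversely every bond arises this way. The one point deserving a line of care is the minimality of the cut $E(A,B)$ — deleting any proper subset of it must leave $K_n$ connected — which follows because after such a deletion there is still an $A$--$B$ edge and $K_A$, $K_B$ are each connected. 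I do not expect a genuine obstacle here; the only things to watch are the row-versus-column representation convention flagged in the footnote (so that $M(n,1)^t$ really is the incidence matrix used to present the graphic matroid) and the routine verification that each $E(A,B)$ is a \emph{minimal} edge cut rather than merely an edge cut.
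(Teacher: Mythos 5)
Your proposal is correct and follows essentially the same route as the paper: identify $\mathcal{M}^n_2[k]$ with the dual of the graphic matroid $S^n_2[k]$ of $K_n$ via Proposition \ref{isos}, use that cocircuits of a graphic matroid are the bonds of the graph, and observe that the bonds of $K_n$ are exactly the sets $E(A,B)$ since every induced subgraph of $K_n$ is connected. The extra care you give to minimality of $E(A,B)$ is the same point the paper makes, just spelled out slightly more explicitly.
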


\begin{proof}
Since $\mathcal{M}^n_2[k] \cong S^n_2[k]^*$, it is the dual of a graphic matroid. The circuits of the dual matroid are the bonds of the original matroid. For graphic matroids, the bonds are the bonds of the graph, i.e. the minimal cutting sets. Cutting sets are the sets of edges which, when deleted, disconnect the set. Since $K_n$ is connected, the minimal cutting sets, when removed, divide $K_n$ into two disconnected sets of vertices $A$ and $B$. To do this minimally, we have to remove exactly the edges in $E(A,B)$. 

\end{proof}

\begin{example}
We can compute $B^{4,\{1,2\}}_I$ in two ways. Consider the complete graph $K_4$. The cycles containing $34$ are $\{13,14,34\}$, $\{23,24,34\}$, $\{12,13,24,34\}$, and $\{12,14,23,34\}$. Taking the complements of each edge in the cycle gives us the collections $\{12,23,24\}$, $\{12,13,14\}$, $\{12,13,24,34\}$, and $\{12,14,23,34\}$, which are the circuits we found in the computation of $B^{4,\{1,2\}}_I$. These are also the bonds of $K_4$ containing $12$, being the cutting sets $E(2,134),E(1,234),E(14,23)$, and $E(13,24)$, respectively. The cutting sets can also be viewed as the 2-color graphs on four vertices, with 1 and 2 being distinct colors.

\end{example}

\begin{proposition}
$\mathcal{C}^{n}_p[k]$ is characteristic independent for $p = 1,2,n-2,n-1$ or $n$, but depends on the characteristic of $k$ for $3 \leq p \leq n-3$.

\end{proposition}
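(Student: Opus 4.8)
For the five exceptional values of $p$ the claim follows from the propositions above. For $p=n$ we have set $B^{n,[n]}_I=I$ by definition — the matroid $\mathcal{M}^n_p[k]$ is only constructed for $p<n$ — so there is nothing to vary. For $p=1$ the unique circuit is all of $\binom{[n]}{1}$; for $p=n-1$ the circuits are exactly the two-element subsets of $\binom{[n]}{n-1}$; for $p=2$ the circuits are the bonds of $K_n$; and for $p=n-2$ they are the sets $\{[n]\setminus e : e\in C\}$ with $C$ a circuit of $K_n$. Since a matroid is determined by its circuit set and none of these descriptions involves the field, $\mathcal{M}^n_p[k]$ — and with it every boundary ideal $B^{n,\sigma}_I$ with $|\sigma|=p$ — is the same for all $k$ in these cases.

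Now fix $p$ with $3\le p\le n-3$, so in particular $n\ge 6$. By Proposition \ref{isos} we have $\mathcal{M}^n_p[k]\cong S^n_{n-p}[k]$, and writing $q=n-p$ the hypothesis reads $3\le q\le n-3$. Hence it suffices to prove that the circuit set of the full simplicial matroid $S^n_q[k]$ — equivalently, the set of minimal dependent rows of the \emph{fixed} integer matrix $M(n,q-1)^t$ — genuinely changes with the characteristic of $k$ when $3\le q\le n-3$. I would deduce this from the classical theory of full simplicial matroids developed in \cite{cr}, \cite{lind}, and surveyed in \cite{cordovillindstrom}.

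The base case is $n=6$, $q=3$. Here $S^6_3[\mathbb{Q}]$ is the rank-$10$ matroid on $\binom{[6]}{3}$ cut out by the minimal dependent rows of $M(6,2)^t$; it is the first full simplicial matroid in the relevant range, and it is \emph{not regular} — this is the classical obstruction that first appears here, cf.\ \cite{cr}, \cite{lind}. Because $S^6_3[\mathbb{Q}]$ is the matroid of a fixed integer matrix, any set of its rows that is independent over $\mathbb{Q}$ remains independent modulo every prime; on the other hand, a non-regular matroid that is representable over $\mathbb{Q}$ must fail to be representable over $\mathbb{F}_p$ for some prime $p$, and hence there must be a set of rows of $M(6,2)^t$ that is independent over $\mathbb{Q}$ but dependent over $\mathbb{F}_p$. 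Thinning this to a minimal such set exhibits a circuit of $S^6_3[\mathbb{F}_p]$ that is not a circuit of $S^6_3[\mathbb{Q}]$, so $\mathcal{C}^6_3[\mathbb{F}_p]\ne\mathcal{C}^6_3[\mathbb{Q}]$. For general $3\le q\le n-3$ I would reduce to this case using that full simplicial matroids are minor-closed along the operations lowering $n$ or $q$: deleting from $\binom{[n]}{q}$ all $q$-sets through a fixed vertex of $[n]$ yields $S^{n-1}_q[k]$ as a restriction, and contracting all $q$-sets through a fixed vertex yields $S^{n-1}_{q-1}[k]$ (\cite{cr}, \cite{cordovillindstrom}); the point is that each of these identifications is given by the same combinatorial recipe over every field, hence is natural in $k$. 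Iterating — contractions until $q$ drops to $3$, then deletions until $n$ drops to $6$, the inequalities $3\le q\le n-3$ keeping us in range at every step — realizes $S^6_3[k]$ as a minor of $S^n_q[k]$ with respect to a choice of deleted and contracted ground-set elements that does not depend on $k$. So if $S^n_q[k]$ were independent of $k$, so would be this fixed minor $S^6_3[k]$, contradicting the base case; therefore $\mathcal{C}^n_p[k]$ depends on the characteristic of $k$ whenever $3\le p\le n-3$.

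The routine ingredients are the reductions through Proposition \ref{isos} and the earlier propositions, together with the naturality-in-$k$ of simplicial-matroid minors, which is standard. The main obstacle is the base case: one must be certain that the concrete integer matrix $M(6,2)^t$ really represents a non-regular matroid — i.e.\ that the excluded-minor statements in the cited literature apply to exactly this matrix and not merely to an abstract copy of $S^6_3$ — and, for an effective version of the statement, to pin down an explicit prime $p$ and an explicit subset of $\binom{[6]}{3}$ whose status as a circuit flips between characteristic $0$ and characteristic $p$.
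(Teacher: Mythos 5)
Your treatment of the exceptional values $p=1,2,n-2,n-1,n$ is fine and matches the paper: the explicit circuit descriptions from the earlier propositions are visibly field-free. The problems are in the range $3\le p\le n-3$, where your argument contains two concrete errors. First, the claim that for a fixed integer matrix ``any set of rows independent over $\mathbb{Q}$ remains independent modulo every prime'' is backwards: reduction mod $p$ can only lower rank, so it is $\mathbb{Q}$-\emph{dependence} that persists mod $p$ (clear denominators in a rational relation and reduce a primitive integer relation). As written your base case is internally inconsistent — if your stated principle were true there could be no set independent over $\mathbb{Q}$ and dependent over $\mathbb{F}_p$, which is exactly what you then produce; the conclusion you want does follow, but only from the correct direction of semicontinuity. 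Second, the minor recipe driving your induction is false: contracting all $q$-sets through a fixed vertex does not give $S^{n-1}_{q-1}[k]$. The star of a vertex has $\binom{n-1}{q-1}$ elements, supports no $(q-1)$-cycles (it is a cone), and $\binom{n-1}{q-1}$ is precisely the rank of $S^n_q[k]$, so the star is a \emph{basis} and contracting it yields the rank-zero matroid; note also that the ground-set sizes in your identification do not match. The correct relation, obtained by dualizing the deletion statement through $S^n_q[k]^*\cong S^n_{n-q}[k]$, is that contracting the $q$-sets \emph{avoiding} a fixed vertex yields $S^{n-1}_{q-1}[k]$ on the star. With these two repairs your minor-induction strategy can be made to work, but it needs these fixes before it is a proof.

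It is also far more roundabout than necessary. The results you would cite already cover every $(n,p)$ in the range directly: Cordovil--Las Vergnas \cite{clv} and Lindstr\"om \cite{lind} show that for $3\le p\le n-3$ the matroid $S^n_p[k]$ is representable over a field of characteristic $2$ if and only if $\operatorname{char}k=2$; since $S^n_p[\mathbb{F}_2]$ is representable over $\mathbb{F}_2$ by construction, equality of the circuit sets in characteristic $2$ and characteristic $\neq 2$ is immediately impossible, with no base case or minor propagation needed. This, together with Cordovil's theorem \cite{cordovil} that representability over two fields forces $S^n_p[k]\cong S^n_p[k']$ (used for the characteristic-independence half), is exactly the paper's proof. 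Your worry about pinning down an explicit prime and circuit is also already addressed in the paper: the example following the proposition exhibits the ten minimal non-faces of the $6$-vertex triangulation of $\mathbb{RP}^2$ as rows of $M(6,3)$ whose sum is twice another row, a circuit in characteristic $2$ only.
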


\begin{proof}
Cordovil and Las Vergnas \cite{clv} and Lindstr\"om \cite{lind} showed that $S^n_p[k]$ is representable over every field if and only if $p \leq 2$ or $p \geq n-2$, and that it is representable over a field of characteristic 2 if and only if the characteristic of $k$ is 2. Cordovil \cite{cordovil} showed that if the full simplicial matroid was representable over fields $k$ and $k'$, then $S^n_p[k] \cong S^n_p[k']$.

\end{proof}

\begin{example}
In $M(6,3)$, the sum of rows corresponding to $123$, $124$, $135$, $146$, $156$, $236$, $245$, $256$, $345$, and $346$, the minimal non-faces of a triangulation of the real projective plane, is twice the row corresponding to $456$. The sum is then 0 in characteristic 2, but not otherwise. One can check (as in the proof of Theorem \ref{main thm}) that when $I$ is the Stanley-Reisner ring associated to this triangulation of the real projective plane, $[\overline{x_4 x_5 x_6} e_{123}] \neq 0$ when $\text{char } k = 2$ but is zero otherwise.

\end{example}

\subsection{Proof of Theorem \ref{main thm}}

We rephrase Theorem \ref{main thm} in terms of our present notation. 

\begin{theorem*}
    Let $u$ be a monomial of $S$ and $\sigma \in \binom{[n]}{p}$ such that $u \not \in I$ and $u \in I:(x_j : j \in \sigma)$ (that is, $\bar u e_\sigma$ is a non-zero monomial cycle in $K^R_p$). Then $[\bar u e_\sigma]$ is the zero class in Koszul homology if and only if $u \in B_I^{n,\sigma}$
\end{theorem*}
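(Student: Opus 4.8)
The plan is to analyze the homology one multidegree at a time, reduce the boundary condition to a statement in a full simplicial matroid on $\supp(x^{\mathbf a})$, and then transport that statement to the matroid $\mathcal M^n_p[k]$ appearing in the definition of $B^{n,\sigma}_I$. Since $I$ is monomial, $K^R$ is $\Z^n$-graded, so $\bar u e_\sigma$ is a boundary in $K^R$ iff it is a boundary in the strand of multidegree $\mathbf a := \deg u + \sum_{i\in\sigma}\mathbf e_i$. Writing $V:=\supp(x^{\mathbf a})=\sigma\cup\supp(u)$, one checks that $(K^R_q)_{\mathbf a}$ has $k$-basis the symbols $e_\tau$ with $\tau\subseteq V$, $|\tau|=q$ and $x^{\mathbf a}/x_\tau\notin I$; call such $\tau$ \emph{live} and the remaining $q$-subsets of $V$ \emph{killed}. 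The killed subsets of $V$ form a simplicial complex $\Gamma$ on $V$ (it is downward closed and contains $\emptyset$ since $x^{\mathbf a}=ux_\sigma\in I$), and the Koszul differential is literally the simplicial boundary, so $(K^R)_{\mathbf a}$ is the augmented relative chain complex $\widetilde C_{\bullet-1}(\Delta_V,\Gamma;k)$ of the full simplex $\Delta_V$ on $V$. Under this identification $\bar u e_\sigma$ is the relative chain $[\sigma]$, and the cycle hypothesis $u\in I:(x_i:i\in\sigma)$ is exactly what makes $\partial_\Delta\sigma=\sum_{j\in\sigma}\pm[\sigma\setminus j]$ supported on $\Gamma$, i.e. $[\sigma]$ a relative cycle.

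Next, since $\Delta_V$ is a simplex, hence acyclic, the connecting homomorphism of the pair gives an isomorphism $H_p(K^R)_{\mathbf a}\cong\widetilde H_{p-2}(\Gamma;k)$ sending $[\bar u e_\sigma]$ to $[\partial_\Delta\sigma]$. Thus $\bar u e_\sigma$ is a boundary iff $\partial_\Delta\sigma$ bounds in $\Gamma$, i.e. iff the column $\partial_\Delta\sigma$ of the simplicial boundary matrix $M(|V|,p-1)$ lies in the span of the columns indexed by killed $p$-subsets of $V$. Equivalently, $\sigma$ lies in the matroid closure of the set of killed $p$-subsets of $V$ in the full simplicial matroid $S^V_p[k]$ (the column matroid of $M(|V|,p-1)$, ground set $\binom Vp$). (For $p=1$ this works using the augmented complex throughout; the case $\sigma=[n]$, where $B^{n,[n]}_I=I$, is immediate since $K^R_{n+1}=0$.)

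Now I transport this to $\mathcal M^n_p[k]$. Using the matroid fact that $\sigma\notin\overline A$ iff some cocircuit through $\sigma$ misses $A$, together with $\mathcal M^V_p[k]=S^V_p[k]^*$ (Proposition \ref{isos} applied over $V$), the condition becomes: $\bar u e_\sigma$ is \emph{not} a boundary iff there is a circuit of $\mathcal M^V_p[k]$ containing $\sigma$ and avoiding all killed $p$-subsets of $V$. Since $S^V_p[k]=S^n_p[k]|_{\binom Vp}$ (a full simplicial matroid restricts to a full simplicial matroid, visible on the defining matrix because rows outside $V$ vanish on columns inside $V$), duality gives $\mathcal M^V_p[k]=S^n_p[k]^*/E_0=\mathcal M^n_p[k]/E_0$, the contraction by $E_0:=\binom{[n]}p\setminus\binom Vp$. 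Elements of $E_0$ are never killed, and for any circuit $C$ of a matroid $N$ and $e\in C\setminus E_0$ the element $e$ lies in a circuit of $N/E_0$ contained in $C\setminus E_0$ (check via closures), while conversely every circuit of $N/E_0$ has the form $C\setminus E_0$ for a circuit $C$ of $N$; since $C\cap E_0$ consists of non‑killed sets, "containing $\sigma$ and avoiding killed $p$-sets" is preserved in both directions. Hence there is such a circuit in $\mathcal M^V_p[k]$ iff there is a circuit $C$ of $\mathcal M^n_p[k]$ containing $\sigma$ and avoiding all killed $p$-sets.

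Finally, a direct computation shows that for a monomial $u\notin I$ one has $u\in I^\sigma_{\sigma'}=x_{\sigma'\setminus\sigma}[I:x_{\sigma\setminus\sigma'}]$ precisely when $\sigma'$ is a killed $p$-set, and, these being monomial ideals, $u\in\sum_{\sigma'\in C}I^\sigma_{\sigma'}$ precisely when $C$ contains a killed $p$-set. Therefore $u\in B^{n,\sigma}_I$ iff every circuit of $\mathcal M^n_p[k]$ through $\sigma$ contains a killed $p$-set, which by the previous paragraph is equivalent to $[\bar u e_\sigma]=0$, completing the proof. I expect the main obstacle to be the combination of the two geometric inputs in the middle steps: fixing the identification $(K^R)_{\mathbf a}\cong\widetilde C_{\bullet-1}(\Delta_V,\Gamma)$ with correct signs and computing the connecting map, and—above all—the matroid bridge, since circuits of $\mathcal M^n_p[k]$ may legitimately use $p$-sets outside $\binom Vp$ that are not killed, so one genuinely needs the contraction identity $\mathcal M^V_p[k]=\mathcal M^n_p[k]/E_0$ (and the stability of "avoiding killed sets" under it) to match the intrinsic homological description with the definition of $B^{n,\sigma}_I$.
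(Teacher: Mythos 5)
Your proof is correct, but it takes a genuinely different route from the paper's. The paper never leaves the full ground set $\binom{[n]}{p}$: it writes the most general potential preimage in the multidegree of $\bar u e_\sigma$ with unknown coefficients $c_\tau$ (setting $c_\tau=0$ when $x_\tau\nmid x_\sigma u$), reads off the boundary condition as a linear system whose equations are rows of $M(n,p)$ --- the row $v_\sigma$ set equal to $1$, and the row of $\sigma'$ set equal to $0$ exactly when $u\notin I^\sigma_{\sigma'}$ --- and settles solvability by a rank/augmented-matrix argument: the system is inconsistent precisely when $v_\sigma$ lies in a minimal dependent set among the unforced rows, i.e.\ when some circuit of $\mathcal{M}^n_p[k]$ through $\sigma$ avoids all $\sigma'$ with $u\in I^\sigma_{\sigma'}$. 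Since the rows of $M(n,p)$ are by definition the representation of $\mathcal{M}^n_p[k]$, no dualization, no restriction to $V=\supp(x_\sigma u)$, and no contraction are needed. You work instead in the column picture: identify the multidegree strand with the relative augmented chain complex of $(\Delta_V,\Gamma)$, use acyclicity of the simplex to turn ``boundary'' into ``$\partial\sigma$ bounds in $\Gamma$,'' reformulate that as closure in $S^V_p[k]$, dualize via the closure--cocircuit criterion, and transport circuits through the contraction $\mathcal{M}^V_p[k]=\mathcal{M}^n_p[k]/E_0$. Each of these steps checks out, including the refined fact that an element of $C\setminus E_0$ lies in a circuit of the contraction inside $C\setminus E_0$, and the key observations that elements of $E_0$ (and $\sigma$ itself) are never killed, which is what makes ``contains $\sigma$ and avoids killed sets'' stable in both directions; the $p=1$ and $\sigma=[n]$ edge cases are also handled. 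What your route buys is conceptual content --- the Hochster-type identification $H_p(K^R)_{\mathbf a}\cong\widetilde H_{p-2}(\Gamma;k)$ sending $[\bar u e_\sigma]$ to $[\partial\sigma]$, and the realization that membership in $B^{n,\sigma}_I$ is a closure statement in a full simplicial matroid --- at the cost of extra machinery (long exact sequence, cocircuit duality, minors) that the paper's short consistency-of-a-linear-system argument avoids entirely.
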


\begin{proof}
    When $\sigma = [n]$, there are no boundaries in $K^R_n$, so $\bar u e_\sigma$ is the trivial class in homology if and only if $u \in I = B_I^{n,[n]}$. 
    Let $1 \leq p < n$. Since $\partial$ preserves multidegree, $\bar u e_\sigma$ is a boundary if and only if it is the image under the Koszul differential of some element of the same multidegree. That is, $\bar u e_\sigma$ is a boundary if and only if
    \[
    \bar u e_\sigma = \partial( \sum_{\tau \in \binom{[n]}{p+1}} c_\sigma \overline{\frac{x_\sigma u}{x_\tau}} e_\tau ) = \sum_{\sigma' \in \binom{[n]}{p}} (\sum_{j \not\in \sigma'} \sgn(\sigma' \cup j, j) c_{\sigma' \cup j})\overline{\frac{x_\sigma u}{x_{\sigma'}}} e_{\sigma'}
    \]
    where $c_\tau \in k$ are coefficients, with the understanding that $c_\tau = 0$ if $x_\tau$ does not divide $x_\sigma u$. Thus, $\bar u e_\sigma$ is a boundary if and only if the following conditions hold
    \begin{itemize}
    \item
    $\sum_{j \not \in \sigma} \sgn(\sigma \cup j, j) c_{\sigma \cup j} = 1$.
    \item
    For all $\sigma' \neq \sigma$, $\sum_{j \not \in \sigma'} \sgn(\sigma' \cup j, j) c_{\sigma' \cup j} = 0$ or $\frac{x_\sigma u}{x_\sigma'} \in I$, i.e. $ u \in I^\sigma_{\sigma'}$.
    \end{itemize}
     These conditions are equivalent to the following linear system being consistent:
    \begin{itemize}
    \item
    $\sum_{j \not \in \sigma} \sgn(\sigma \cup j, j) c_{\sigma \cup j} = 1$.
    \item
    $\sum_{j \not \in \sigma'} \sgn(\sigma' \cup j, j) c_{\sigma' \cup j} = 0$ for all $\sigma' \neq \sigma$ such that $u \not \in I^{\sigma}_{\sigma'}$.
    \end{itemize}
    Now these linear equations correspond to rows of $M(n,p)$. Let $M^u(n,p)$ denote the submatrix of $M(n,p)$ which includes only the rows corresponding to $\sigma$ and to $\sigma' \neq \sigma$ such that $u \not \in I^{\sigma}_{\sigma'}$. Let $\widetilde{M}^u(n,p)$ be $M^u(n,p)$ with the row $v_\sigma$ corresponding to $\sigma$ removed. The linear system is consistent, then, if and only if the rank of $M^u(n,p)$ is the rank of the augmented matrix
\[
\left(\begin{array}{c|c}  
 v_\sigma & 1 \\  
 \widetilde{M}^u(n,p) & 0  
\end{array}\right) 
\] 
Now the rank of the augmented matrix is $1 + \rk(\widetilde{M}^u(n,p))$, and $\rk(M^u(n,p)) = 1 + \rk(\widetilde{M}^u(n,p)))$ if and only if $v_\sigma$ is linearly independent of the rows of $\widetilde{M}^u(n,p)$. Alternatively, the system is inconsistent if and only if $M^u(n,p)$ has a (minimal) set $C$ of linearly dependent rows containing $v_\sigma$. Now $C$ is a subset of rows of $M^u(n,p)$ if and only if $u \not \in \sum_{\sigma' \in C} I^\sigma_{\sigma'}$. So, $\bar u e_\sigma$ is a boundary if and only if $u \in \sum_{\sigma' \in C} I^\sigma_{\sigma'}$ for all possible $C$. But these $C$ are simply the circuits of $\mathcal{M}^n_p[k]$ containing $\sigma$, which concludes the proof.
\end{proof}

\section{Products of Monomial Cycles}
\label{section four vars}

A product of monomial cycles is again a monomial cycle. We can therefore use Theorem \ref{main thm} to derive Theorem \ref{necessary cond}, which establish conditions in terms of ideal inclusions that determine when products of monomial cycles vanish. These are new necessary conditions for monomial ideals to be Golod. We then show that these conditions are in fact sufficient for Golodness in four variables.

\subsection{Necessary Conditions for Golodness}

\begin{proposition}
\label{product condition}
Let $2 \leq p \leq n$ and $1 \leq q < p$. Products of monomial cycles in $H_q(K^R)$ and $H_{p-q}(K^R)$, respectively are trivial if and only if the following inclusion of ideals holds for all permutations of the indices:
\[
[I:(x_1,\dots,x_q)][I:(x_{q+1},\dots,x_{p})] \subset B^{n,\{1,\dots,p\}}_I
\]
\end{proposition}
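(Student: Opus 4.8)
The plan is to reduce Proposition~\ref{product condition} to Theorem~\ref{main thm} (in its restated form) by unwinding what a product of monomial cycles looks like. First I would set up the two sides. A general monomial cycle in $H_q(K^R)$ has the form $\bar u e_{\{1,\dots,q\}}$ up to a permutation of the indices, where $u$ is a monomial with $u \in I:(x_1,\dots,x_q)$; similarly a monomial cycle in $H_{p-q}(K^R)$ can be written (after permuting) as $\bar v e_{\{q+1,\dots,p\}}$ with $v \in I:(x_{q+1},\dots,x_p)$. Their product in $H_*(K^R)$ is the class of $\overline{uv}\,e_{\{1,\dots,q\}} \wedge e_{\{q+1,\dots,p\}} = \overline{uv}\,e_{\{1,\dots,p\}}$, which is again a monomial cycle (and the sign is $+1$ because the index sets are already in increasing order and disjoint). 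So ``all such products vanish'' means exactly: for every monomial $u \in I:(x_1,\dots,x_q)$ and every monomial $v \in I:(x_{q+1},\dots,x_p)$, the class $[\overline{uv}\,e_{\{1,\dots,p\}}]$ is zero — and of course the same for every permutation of $\{1,\dots,n\}$ applied throughout, since the $\mf S_n$-action permutes monomial cycles to monomial cycles.

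Next I would invoke Theorem~\ref{main thm}: $[\overline{uv}\,e_{\{1,\dots,p\}}]$ is zero if and only if $uv \in B^{n,\{1,\dots,p\}}_I$ (note that when $uv \in I$ the cycle is already zero as a chain, and indeed $I \subset B^{n,\{1,\dots,p\}}_I$, so Theorem~\ref{main thm} still applies on the nose — or one uses the trivial observation that zero chains are boundaries). Therefore the vanishing of all these products is equivalent to $uv \in B^{n,\{1,\dots,p\}}_I$ for all monomials $u \in I:(x_1,\dots,x_q)$ and all monomials $v \in I:(x_{q+1},\dots,x_p)$. Since both colon ideals are monomial ideals, they are spanned as $k$-vector spaces by their monomials, and the set of products $uv$ of such monomials generates the product ideal $[I:(x_1,\dots,x_q)]\cdot[I:(x_{q+1},\dots,x_p)]$. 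Because $B^{n,\{1,\dots,p\}}_I$ is an ideal (it is a finite intersection of sums of monomial ideals, hence an ideal), containing all the generators $uv$ is equivalent to containing the whole product ideal. This gives exactly the stated inclusion, and ranging over all permutations of the indices on both sides matches the ``for all permutations'' clause.

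One point that deserves care, and which I expect to be the only real subtlety, is making sure the correspondence between ``monomial cycles in $H_q(K^R)$'' and ``monomials in $I:(x_{i_1},\dots,x_{i_q})$'' is exactly right: a monomial cycle $\bar u e_\sigma$ requires $x_iu \in I$ for each $i \in \sigma$, i.e. $u \in I:(x_i : i \in \sigma)$, and conversely; this is already recorded in the preliminaries (``a monomial cycle $\bar u e_\sigma$ must satisfy $u \in I:(x_i : i \in \sigma)$''), so I would just cite that. I would also remark that we do not need $u \notin I$ or $uv \notin I$: if $uv \in I$ the product is trivially zero and the inclusion $uv \in I \subset B^{n,\{1,\dots,p\}}_I$ holds automatically, so the ``non-zero monomial cycle'' hypothesis in the restated Theorem~\ref{main thm} causes no trouble — one handles the $uv \in I$ case separately in one line and applies the theorem only when $uv \notin I$. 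With these observations the proof is essentially a translation, the content being entirely in Theorem~\ref{main thm}; the main obstacle is purely bookkeeping of signs and of the equivalence between ``generators lie in the ideal'' and ``the product ideal lies in the ideal.''
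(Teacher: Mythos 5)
Your proposal is correct and follows essentially the same route as the paper's proof: both directions reduce to Theorem \ref{main thm} applied to the product cycle $\overline{uv}\,e_{\{1,\dots,p\}}$, combined with the observation that the monomials $uv$ generate the product of the two colon ideals. Your extra remarks (the sign being $+1$, and handling the degenerate case $uv \in I$ separately) are sound bookkeeping that the paper leaves implicit.
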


\begin{proof}
    Suppose the inclusion fails for some permutation $\pi$ of the indices. Let $\sigma = \{\pi(1),\dots,\pi(q)\}$ and $\tau = \{\pi(q+1),\dots,\pi(p)\}$ and let $u,v$ be monomials in $I:(x_\ell : \ell \in \sigma)$ and $I:(x_m: m \in \tau)$, respectively, such that $uv \not \in B_I^{n,\sigma \cup \tau}$. Then $\bar u e_\sigma$ and $\bar v e_\tau$ are monomial cycles, and their product $\overline{uv} e_\sigma \wedge e_\tau$ is a monomial cycle in $Z_{p}(K^R)$ which, by Theorem 1, is not a boundary. So, $[\overline{uv} e_\sigma \wedge e_\tau]$ is a nontrivial homology class.
    \par
    Suppose all of the ideal inclusions hold. Then for monomial cycles $\bar u e_\sigma \in Z_q(K^R)$ and $\bar v e_\tau \in Z_{p-q}(K^R)$, their product is the monomial cycle $\overline{uv} e_\sigma \wedge e_\tau$. Since $uv \in B_I^{n,\sigma \cup \tau}$, Theorem \ref{main thm} implies that $[\overline{uv} e_\sigma \wedge e_\tau]$ is the trivial homology class. 
\end{proof}

As a corollary to Proposition \ref{product condition}, we obtain Theorem \ref{necessary cond}. As further corollaries, we obtain necessary conditions for Golodness. To write these corollaries we first record some definitions. We say $I$ {\bf admits a monomial basis in Koszul homology} if, for all $1 \leq p \leq n$, either $H_p(K^R)$ has a basis consisting of monomial cycles or $H_p(K^R) = 0$. We listed some examples of such ideals in the introduction, and we will find more in Section \ref{section mon basis}. An ideal $I$ is {\bf weakly Golod} if all products in Koszul homology vanish. 

\begin{corollary}
Let $I$ be a monomial ideal that admits a monomial cycle basis in Koszul homology. Then $I$ is weakly Golod if and only if
\[
[I:(x_1,\dots,x_q)][I:(x_{q+1},\dots,x_p)] \subset B_I^{n,\{1,2,\dots,p\}}
\]
holds for all $2 \leq p \leq n$ and for all $1 \leq q < p$ and for all permutations of the indices.
\end{corollary}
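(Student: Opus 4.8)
The plan is to deduce this corollary directly from Proposition \ref{product condition} (equivalently, from Theorem \ref{necessary cond}), using the hypothesis that $I$ admits a monomial cycle basis in Koszul homology to bridge the gap between "products of monomial cycles vanish" and "all products vanish." The forward direction is immediate: if $I$ is weakly Golod, then in particular all products of \emph{monomial} cycles vanish, so Proposition \ref{product condition} gives the stated inclusions for every $2 \le p \le n$, every $1 \le q < p$, and every permutation of the indices. No hypothesis on the homology basis is needed here.

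For the converse, suppose all the displayed inclusions hold. By Proposition \ref{product condition}, every product of a monomial cycle in $H_q(K^R)$ with a monomial cycle in $H_{p-q}(K^R)$ is trivial, for all relevant $p,q$. The key point is that this suffices to kill \emph{all} products once we know $H_*(K^R)$ has a monomial cycle basis. Concretely: fix homological degrees $a,b \ge 1$ and arbitrary classes $\alpha \in H_a(K^R)$, $\beta \in H_b(K^R)$; we must show $\alpha\beta = 0$ in $H_{a+b}(K^R)$. If $H_a(K^R) = 0$ or $H_b(K^R) = 0$ there is nothing to prove, so assume both are nonzero, hence each has a basis of monomial cycles by hypothesis. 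Write $\alpha = \sum_i c_i [\bar u_i e_{\sigma_i}]$ and $\beta = \sum_j d_j [\bar v_j e_{\tau_j}]$ as $k$-linear combinations of monomial cycle basis elements, with $|\sigma_i| = a$ and $|\tau_j| = b$. Then by bilinearity of the product on Koszul homology, $\alpha\beta = \sum_{i,j} c_i d_j [\bar u_i e_{\sigma_i}][\bar v_j e_{\tau_j}]$, and each term $[\bar u_i e_{\sigma_i}][\bar v_j e_{\tau_j}]$ is a product of monomial cycles living in $H_a$ and $H_b$, hence is zero by Proposition \ref{product condition} applied with $p = a+b$ and $q = a$ (after renaming indices by the appropriate permutation so that $\sigma_i = \{1,\dots,a\}$ and $\tau_j = \{a+1,\dots,a+b\}$ — note $\sigma_i$ and $\tau_j$ may overlap, but if they do then $e_{\sigma_i}\wedge e_{\tau_j} = 0$ and the product is trivially zero, so we may assume they are disjoint). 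Therefore $\alpha\beta = 0$, so all products in Koszul homology vanish and $I$ is weakly Golod.

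The only subtlety — and it is minor — is handling the degenerate cases in the bilinearity argument: products where $\sigma_i \cap \tau_j \neq \emptyset$ vanish on the nose in the exterior algebra, and products where $a + b > n$ also vanish since $\bigwedge^{a+b} R^n = 0$; these do not require the inclusions at all. There is also a bookkeeping point that Proposition \ref{product condition} is stated for the specific subsets $\{1,\dots,q\}$ and $\{q+1,\dots,p\}$ but "for all permutations of the indices," which is exactly what lets us match an arbitrary disjoint pair $(\sigma_i,\tau_j)$ to the canonical pair. I do not expect any real obstacle here; the content of the corollary is entirely carried by Proposition \ref{product condition}, and the monomial-cycle-basis hypothesis is used purely to reduce a general product to a $k$-linear combination of monomial-cycle products via bilinearity.
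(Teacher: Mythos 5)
Your proof is correct and is exactly the argument the paper intends: the forward direction is immediate from Proposition \ref{product condition}, and the converse uses the monomial cycle basis plus bilinearity to reduce arbitrary products to products of monomial cycles (with the degenerate cases $\sigma_i \cap \tau_j \neq \emptyset$ and $a+b>n$ vanishing trivially). No gaps; this matches the paper's (unwritten but evident) reasoning.
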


If all products in Koszul homology vanish, then certainly all products of monomial cycles do. We therefore obtain the following necessary conditions for vanishing of products in Koszul homology. 

\begin{corollary}
If the product $H_q(K^R) \otimes H_{p-q}(K^R) \to H_{p}(K^R)$ is trivial, then the following inclusion of ideals holds for all permutations of the indices:
\[
[I:(x_1,\dots,x_q)][I:(x_{q+1},\dots,x_{p})] \subset B_I^{n,\{1,\dots,p\}}
\]
\end{corollary}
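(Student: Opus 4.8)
The plan is to deduce this immediately from Proposition \ref{product condition}, exploiting the fact that the hypothesis of the corollary — triviality of the full product map $H_q(K^R) \otimes H_{p-q}(K^R) \to H_p(K^R)$ — is strictly stronger than the hypothesis appearing on the left-hand side of the equivalence in Proposition \ref{product condition}, which asks only that products of \emph{monomial} cycles vanish. So first I would fix an arbitrary permutation $\pi$ of the indices and arbitrary monomials $u \in [I:(x_{\pi(1)},\dots,x_{\pi(q)})]$ and $v \in [I:(x_{\pi(q+1)},\dots,x_{\pi(p)})]$. Setting $\sigma = \{\pi(1),\dots,\pi(q)\}$ and $\tau = \{\pi(q+1),\dots,\pi(p)\}$, the elements $\bar u e_\sigma$ and $\bar v e_\tau$ are monomial cycles in $Z_q(K^R)$ and $Z_{p-q}(K^R)$ respectively, hence represent classes $[\bar u e_\sigma] \in H_q(K^R)$ and $[\bar v e_\tau] \in H_{p-q}(K^R)$.

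Next, applying the hypothesis to these two classes and using that the multiplication on $H_*(K^R)$ is induced by the wedge product (as recorded in the Preliminaries), we get that $[\bar u e_\sigma]\cdot[\bar v e_\tau] = [\overline{uv}\, e_\sigma \wedge e_\tau]$ is the zero class in $H_p(K^R)$. Since $\pi$, $u$, and $v$ were arbitrary, every product of monomial cycles in $H_q(K^R)$ and $H_{p-q}(K^R)$ is trivial. Proposition \ref{product condition} then yields exactly the inclusion
\[
[I:(x_1,\dots,x_q)][I:(x_{q+1},\dots,x_p)] \subset B_I^{n,\{1,\dots,p\}}
\]
for all permutations of the indices, which is the claim.

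I expect no genuine obstacle: the only point that requires even a moment's thought is that monomial cycles are bona fide cycles representing Koszul homology classes, so that the ambient vanishing hypothesis applies to them — and this is immediate from the definitions. Alternatively, one could route the argument through the named notions: the hypothesis (for all $p$, $q$) says precisely that $I$ is weakly Golod, weak Golodness trivially forces all products of monomial cycles to vanish, and by Theorem \ref{necessary cond} this is equivalent to the stated family of inclusions, of which the displayed one is a single member.
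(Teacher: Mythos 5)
Your proposal is correct and is essentially the paper's argument: the paper derives this corollary in one line from Proposition \ref{product condition}, observing that triviality of the full product map forces all products of monomial cycles to vanish, and then invoking the equivalence of the proposition. Your unwinding of that observation (fixing $\pi$, $u$, $v$ and checking the monomial classes lie in the relevant homology groups) is just a more explicit rendering of the same step.
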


\begin{corollary}
Let $2 \leq p \leq n$ and $1 \leq q < p$. If products of Koszul homology classes into $H_p(K^R)$ are trivial, then the following holds for all permutations of the indices:
\[
[I:(x_1,\dots,x_q)][I:(x_{q+1},\dots,x_p)] \subset B_I^{n,\{1,2,\dots,p\}}
\]

\end{corollary}

\begin{corollary}
\label{weak golod cond}
Let $I$ be a (weakly) Golod monomial ideal. Then the following inclusions hold for all $2 \leq p \leq n$ and for all $1 \leq q < p$ and for all permutations of the indices:
\[
[I:(x_1,\dots,x_q)][I:(x_{q+1},\dots,x_p)] \subset B_I^{n,\{1,2,\dots,p\}}
\]

\end{corollary}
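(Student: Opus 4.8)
The plan is to deduce this immediately from Proposition \ref{product condition} (equivalently, from Theorem \ref{necessary cond}) once we observe that the hypothesis forces all products of monomial cycles to vanish. Recall that a weakly Golod ideal is, by definition, one for which all products in Koszul homology vanish, and that a Golod ideal is weakly Golod: this is the standard fact that Golodness implies the triviality of all products of Koszul homology classes of positive homological degree (the degree-zero part $H_0(K^R) = k$ acts as the identity and is irrelevant). So in either case we may assume that the multiplication map $H_q(K^R) \otimes H_{p-q}(K^R) \to H_p(K^R)$ is zero for every $1 \le q < p \le n$.

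Next I would specialize this vanishing to monomial cycles. A monomial cycle $\bar u e_\sigma$ with $\sigma \in \binom{[n]}{q}$ represents a class in $H_q(K^R)$, and likewise $\bar v e_\tau$ with $\tau \in \binom{[n]}{p-q}$ represents a class in $H_{p-q}(K^R)$; their product in the differential graded algebra structure is the monomial cycle $\overline{uv}\, e_\sigma \wedge e_\tau$, which (when $\sigma$ and $\tau$ are disjoint) represents the product class in $H_p(K^R)$. Since all products vanish, in particular $[\overline{uv}\, e_\sigma \wedge e_\tau] = 0$ for all such monomial cycles. This is precisely the hypothesis of the ``only if'' direction of Proposition \ref{product condition}.

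Finally I would invoke Proposition \ref{product condition} directly: it states that the products of monomial cycles in $H_q(K^R)$ and $H_{p-q}(K^R)$ are trivial if and only if the inclusion
\[
[I:(x_1,\dots,x_q)][I:(x_{q+1},\dots,x_p)] \subset B^{n,\{1,\dots,p\}}_I
\]
holds for all permutations of the indices. Having just established that these products are trivial, we conclude that all of these inclusions hold, for every $2 \le p \le n$ and every $1 \le q < p$, which is exactly the assertion of Corollary \ref{weak golod cond}.

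There is essentially no obstacle here — the content is entirely packaged into Proposition \ref{product condition} and into the well-known implication that Golod ideals are weakly Golod. The only point requiring a word of care is bookkeeping on homological degrees: one should note that every pair $(q, p-q)$ with $1 \le q < p$ yields monomial cycles in strictly positive homological degrees, so the Golod hypothesis (which controls products of positive-degree classes) applies to all of them, and conversely that ranging over all such $(p,q)$ and all index permutations recovers exactly the family of inclusions in the statement.
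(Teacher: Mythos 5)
Your proposal is correct and follows exactly the route the paper intends: Golod (hence weakly Golod) implies all Koszul homology products vanish, so in particular products of monomial cycles vanish, and Proposition \ref{product condition} then yields the stated inclusions for every admissible $p$, $q$, and permutation. This matches the paper's own (implicit) derivation of the corollary.
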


\par

\subsection{Golod Monomial Ideals in Four Variables}

Dao and De Stefani showed in \cite{dao-destefani-2020} that monomial ideals in three or fewer variables admit a monomial basis in Koszul homology. It is easy to show, as we did in Section \ref{section prelim}, that any monomial ideal $I$ admits a monomial basis in Koszul homology in degrees $1$ and $n$; the authors showed that degree $2$ admits such a basis as well. With this, they classify Golod monomial ideals in terms of ideal inclusions, equivalent to those in Theorem \ref{necessary cond}. We show that having vanishing products of monomial cycles is sufficient for being Golod in four variables, thus classifying Golod four variable monomial ideals in terms of the inclusions Theorem \ref{necessary cond}. We show this despite the fact that not all monomial ideals in four variables admit a monomial basis in Koszul homology in degree three, as Example \ref{eg} showed.

\par
When $n=4$,  $H_1(K^R),H_2(K^R)$, and $H_4(K^R)$ admit a monomial cycle basis, so most products in Koszul homology can be considered as products of monomial cycles. The only exception is $H_1(K^R) \otimes H_3(K^R) \to H_4(K^R)$ where the homology class in $H_3(K^R)$ may not be represented by a monomial cycle. It turns out that having vanishing monomial products is still sufficient to determine vanishing in this case. We show this by exploiting multidegrees.

\begin{proposition}
\label{h1h3 vanish}
Let $I$ be a monomial ideal in $k[x_1,x_2,x_3,x_4]$. The product $H_1(K^R) \otimes H_3(K^R) \to H_4(K^R)$ is trivial if and only if the inclusion
\[
[I:x_1][I:(x_2,x_3,x_4)] \subset I
\]
holds under all permutations of the indices.
\end{proposition}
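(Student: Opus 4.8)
The plan is to prove both directions by understanding multigraded components of $H_4(K^R)$. Since $n=4$, a class in $H_4(K^R)$ of multidegree $\mathbf{a}$ is represented by $\bar f e_{1234}$ with $f$ the monomial of multidegree $\mathbf a - (1,1,1,1)$, so $H_4(K^R)_{\mathbf a} \neq 0$ forces $\bar f \ne 0$, i.e.\ $f \notin I$, together with $x_i f \in I$ for all $i$, i.e.\ $f \in I:(x_1,x_2,x_3,x_4)$. Thus $H_4(K^R)$ is spanned by monomial cycles $\bar f e_{1234}$ with $f \in (I:\mathfrak m)\setminus I$, and each nonzero multidegree strand is at most one-dimensional. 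The key observation is that although a class $\xi \in H_3(K^R)$ need not be a monomial cycle, once we fix a multidegree $\mathbf b$ the strand $H_1(K^R)_{\mathbf b - \mathbf a}$ is spanned by monomial cycles $\bar x_j e_i$ (with $j$ determined by the multidegree), so the product $H_1 \otimes H_3 \to H_4$ lands in multidegree-homogeneous pieces that are controlled.

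**The forward direction.** Suppose the product $H_1(K^R)\otimes H_3(K^R)\to H_4(K^R)$ is trivial. Take monomials $u \in I:x_1$ and $v \in I:(x_2,x_3,x_4)$; I must show $uv \in I$. If either $u\in I$ or $v\in I$ this is immediate, so assume $u,v\notin I$. Then $\bar u e_1$ is a nonzero class in $H_1(K^R)$ (it is a cycle since $x_1 u\in I$), and $\bar v e_{234}$ is a cycle since $x_2v, x_3v, x_4v \in I$; moreover $\bar v e_{234}$ is a \emph{nonzero} class in $H_3(K^R)$ --- here I would invoke Theorem \ref{main thm}: $[\bar v e_{234}]=0$ iff $v \in B_I^{4,\{2,3,4\}} = I + x_1[I:(x_2,x_3,x_4)]$, and if $v\in I + x_1[I:(x_2,x_3,x_4)]$ then I claim $uv\in I$ already (write $v = w + x_1 w'$ with $w\in I$, $w'\in I:(x_2,x_3,x_4)$; then $uv = uw + x_1 u w'$, and $x_1 u w' \in I$ since $x_1 u \in I$, so reduce to the smaller case $w'$). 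So by Noetherian induction on $v$ we may assume $[\bar v e_{234}]\neq 0$. Then the product $[\bar u e_1][\bar v e_{234}] = [\overline{uv}\, e_1\wedge e_{234}] = \pm[\overline{uv}\, e_{1234}]$ must vanish by hypothesis, and since in multidegree $n$ there are no boundaries this forces $\overline{uv}=0$, i.e.\ $uv\in I$.

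**The reverse direction.** Suppose $[I:x_1][I:(x_2,x_3,x_4)]\subset I$ holds under all permutations. Let $\xi \in H_1(K^R)$ and $\eta\in H_3(K^R)$ be homogeneous of multidegrees $\mathbf c$ and $\mathbf d$; it suffices to show $\xi\eta = 0$ in $H_4(K^R)_{\mathbf c + \mathbf d}$. Write $\xi = [\bar x_j e_i]$ (up to scalar; the multidegree pins down which), so $x_i\mid$ is not needed and $x_i\cdot x_j \in I$ forces... more precisely $\bar x_j e_i$ a cycle means $x_j\in I:x_i$. Write $\eta = [\sum_{\ell} c_\ell \overline{g_\ell}\, e_{\tau_\ell}]$ with $|\tau_\ell|=3$; homogeneity forces $\overline{x_i x_j g_\ell}$... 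Rather than track this directly, I would argue: $\xi\eta \in H_4(K^R)_{\mathbf c+\mathbf d}$, which is spanned by $\overline{f}e_{1234}$ with $f = $ the monomial of degree $\mathbf c + \mathbf d - (1,1,1,1) \in I:\mathfrak m$, $f\notin I$; and $\overline{x_i x_j g_\ell} e_{1234}$ contributes $\pm \overline{x_j g_\ell} e_{1234}$ only when $\{i\}\cup\tau_\ell = [4]$, say $\tau_\ell = [4]\setminus\{i\}$, in which case $x_j g_\ell = $ a monomial $f$ with $x_j \in I:x_i$ and $g_\ell \in I:(x_m : m\neq i)$ (since $\overline{g_\ell}e_{\tau_\ell}$ being a cycle, or appearing in a cycle, forces $x_m g_\ell \in I$ for $m\in\tau_\ell$). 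Then $f = x_j g_\ell \in [I:x_i][I:(x_m:m\neq i)]\subset I$ by hypothesis, so $\overline f = 0$, hence $\xi\eta=0$. The technical care is in showing each term $\overline{g_\ell}e_{\tau_\ell}$ of a representing cycle $\eta$ of multidegree $\mathbf d$ individually satisfies $x_m g_\ell\in I$ for $m\in\tau_\ell$ --- but this is automatic, since $\overline{g_\ell}e_{\tau_\ell}$ is the unique (up to scalar) monomial in multidegree $\mathbf d$ supported on $e_{\tau_\ell}$, and the condition $x_m g_\ell\in I$ is exactly the condition that $g_\ell\notin 0$ forces... I would instead just note $g_\ell = \frac{(\text{mon. of deg }\mathbf d)}{x_{\tau_\ell}}$ and multiplying by $x_j$ over $x_i$ lands in the monomial $f$ of degree $\mathbf c+\mathbf d-(1,1,1,1)$, with the colon conditions inherited from $\eta$ being a cycle. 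The main obstacle is this bookkeeping: making precise that the non-monomial parts of $\eta$ cannot ``conspire'' to produce a nonzero product, which comes down to the fact that $H_4$ strands are one-dimensional and $\partial$ strictly preserves multidegree, so the product is computed strand-by-strand and each strand reduces to the stated inclusion.
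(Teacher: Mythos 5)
Your forward direction is fine, though the Noetherian induction is unnecessary: since $B_4(K^R)=\partial(K^R_5)=0$, the hypothesis applied to the classes $[\bar u e_1]$ and $[\bar v e_{234}]$ (whether or not they are nonzero) gives $[\overline{uv}e_{1234}]=0$, hence $uv\in I$ directly; equivalently, the paper just takes the contrapositive via Theorem \ref{main thm}.

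The reverse direction, however, has a genuine gap at exactly the point you flag as ``bookkeeping.'' You claim that the coefficient $g_\ell$ of $e_{[4]\setminus\{i\}}$ in a cycle representing $\eta\in H_3(K^R)$ satisfies $x_m g_\ell\in I$ for all $m\neq i$ because it ``appears in a cycle.'' That is false: individual terms of a cycle need not be cycles, since their boundaries can cancel against boundaries of the other terms. The paper's own Example \ref{eg} is a counterexample: for $J=(x_1x_3,x_1x_4,x_2x_3,x_2x_4)$ the class $[\bar x_1 e_{234}-\bar x_2 e_{134}]$ spans $H_3$, and its $e_{234}$-coefficient $x_1$ is \emph{not} in $J:(x_2,x_3,x_4)$ (indeed $x_1x_2\notin J$), so your claimed membership $x_j g_\ell\in[I:x_i][I:(x_m:m\neq i)]$ simply does not hold in this case. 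Your closing appeal to one-dimensionality of $H_4$-strands does not repair this, because the issue is a property of the degree-$3$ representative, not of $H_4$. The paper's proof closes this hole with a dichotomy you are missing: if $x_i$ divides $g_\ell$, then $u g_\ell\in I$ already because $x_iu\in I$ (no inclusion hypothesis needed, and indeed $g_\ell$ may fail the colon condition, as in the example above); if $x_i$ does not divide $g_\ell$, then the multidegree of the class is zero in the $i$th coordinate, while every other component $\bar v_j e_{[4]\setminus\{j\}}$, $j\neq i$, has multidegree strictly positive there, so those components must vanish, the class is the monomial cycle $c_i\bar g_\ell e_{[4]\setminus\{i\}}$, and only then is $g_\ell\in I:(x_m:m\neq i)$ so that the hypothesis applies. (A smaller slip: a multigraded basis element of $H_1$ is $[\bar u e_i]$ with $u$ an arbitrary monomial in $I:x_i$, not a single variable $x_j$, and a fixed multidegree does not by itself pin the class to a single $e_i$; one should instead invoke the existence of a monomial cycle basis of $H_1$ and check products on basis elements, as the paper does.)
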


\begin{proof}
    If one of the inclusions doesn't hold, then we can find a non-trivial product as in the proof of Proposition \ref{product condition}. So, assume the inclusions do hold. 
    \par
    We can choose $[\bar u e_i]$ to be an element of a monomial cycle basis of $H_1(K^R)$. Let $[c_1 \bar v_1 e_{234} + c_2 \bar v_2 e_{134} + c_3 \bar v_3 e_{124} + c_4 \bar v_4 e_{123}]$ be a multigraded basis element in $H_3(K^R)_{\mathbf a}$, where the $c_j$ are coefficients in $k$. The product of the two basis elements is $[\pm c_4 \overline{u v_i} e_{1234}]$, which is a monomial cycle up to scaling. Since $\bar u e_i$ is a cycle, $x_i u \in I$. So, if $x_i$ divides $ v_i$, the product vanishes. If $x_i$ does not divide $v_i$, then the multidegree of $c_i \bar v_i e_{[4]\backslash \{i\}}$ has 0 in the $i$th place, so $\mathbf a$ has 0 in the $i$th place as well. But the other $e_\tau$ for $\tau \neq [4] \backslash \{i\}$ have multidegrees which are positive in the $i$th place. So, we can assume $c_j = 0$ for $j \neq i$. But then $c_i \bar v_i e_{[4] \backslash \{i\}}$ is a monomial cycle, so $v_i \in I:(x_j : j \neq i)$ and, by assumption, the product vanishes.
\end{proof}

With this, we can prove Theorem \ref{four var}, which we restate here for convenience.

\begin{theorem*}
Let $I \subset (x_1,\dots,x_4)^2$ be a monomial ideal in $k[x_1,x_2,x_3,x_4]$. Then $I$ is Golod if and only if the following hold under all permutations of the indices:
    \[
    [I:x_1][I:(x_2,x_3,x_4)] \subset I
    \]
    \[
    [I:(x_1,x_2)][I:(x_3,x_4)] \subset I
    \]
    \[
    [I:x_1][I:(x_2,x_3)] \subset I + x_4[I:(x_1,x_2,x_3)] 
    \]
    \[
    [I:x_1][I:x_2] \subset I + (x_3,x_4)[I:(x_1,x_2)] + (x_3x_4)[I:x_1x_2] \cap ( x_3[I:x_1] \cap x_4[I:x_2] + x_3[I:x_2] \cap x_4[I:x_1] )
    \]
\end{theorem*}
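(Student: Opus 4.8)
The plan is to prove both implications. The forward direction ($I$ Golod $\Rightarrow$ the four inclusions) is immediate from Corollary \ref{weak golod cond}: a Golod ideal is weakly Golod, hence satisfies $[I:(x_1,\dots,x_q)][I:(x_{q+1},\dots,x_p)]\subset B^{n,\{1,\dots,p\}}_I$ for all $2\le p\le 4$, $1\le q<p$, and all permutations. Unwinding this for $n=4$ using the explicit boundary ideals computed in Example \ref{4var bound ideals} — namely $B^{4,\{1,2,3,4\}}_I=I$, $B^{4,\{1,2,3\}}_I=I+x_4[I:(x_1,x_2,x_3)]$, and the displayed formula for $B^{4,\{1,2\}}_I$ — yields exactly the four families of inclusions in the statement (the $(p,q)=(3,1)$ and $(4,1),(4,2)$ and $(2,1)$ cases, respectively, with all permutations).

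The substantive direction is the converse. Assume the four inclusions hold; I want to conclude $I$ is Golod. Since $\operatorname{char}$-independently $n=4$, by Katthän \cite[Theorem 6.3(4)]{katthan} it suffices to show that all products in $H_{\ge 1}(K^R)$ vanish. The products to kill are: $H_1\otimes H_1\to H_2$, $H_1\otimes H_2\to H_3$, $H_2\otimes H_2\to H_4$, $H_1\otimes H_3\to H_4$, and $H_1\otimes H_1\otimes H_1 \to H_3$ type iterated products (which reduce to the binary ones by associativity, and in any case the triple Massey-type products are subsumed under Katthän's criterion as stated). The first strategic point is that $H_1(K^R)$, $H_2(K^R)$, $H_4(K^R)$ all admit monomial cycle bases (this is recalled in Section \ref{section prelim}: degree $1$ and $n$ always, degree $2$ by \cite{dao-destefani-2020}). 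Therefore the products $H_1\otimes H_1\to H_2$, $H_1\otimes H_2\to H_3$, and $H_2\otimes H_2\to H_4$ can be checked on basis elements which are monomial cycles, so by Proposition \ref{product condition} they vanish precisely because of, respectively, the fourth inclusion (via $B^{4,\{1,2\}}_I$), the third inclusion (via $B^{4,\{1,2,3\}}_I$), and the second inclusion (via $B^{4,\{1,2,3,4\}}_I=I$). The one product where a factor need not be a monomial cycle is $H_1\otimes H_3\to H_4$, and that is handled by Proposition \ref{h1h3 vanish}: the first inclusion $[I:x_1][I:(x_2,x_3,x_4)]\subset I$ is exactly what makes it vanish, by the multidegree argument given there (a multigraded $H_3$-class pairs nontrivially with $\bar u e_i$ only through its $e_{[4]\setminus\{i\}}$-component, and that component, when it survives, is forced to be a monomial cycle).

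The main obstacle — and the place where care is needed — is making sure the list of products above is genuinely complete for deducing Golodness in four variables, i.e. that Katthän's theorem really does reduce Golodness to the vanishing of these bilinear products (with no surviving higher Massey operations), and then that every such bilinear product decomposes over a basis in which at most one factor fails to be a monomial cycle. The bilinear products landing in $H_2$, $H_3$ (for $H_1\otimes H_2$), and $H_4$ (for $H_2\otimes H_2$) are fine since both the source spaces and (for $p<n$) enough of the target have monomial bases; the delicate case is any product with an $H_3$ factor, of which the only one is $H_1\otimes H_3\to H_4$ since $H_3\otimes H_j$ for $j\ge 1$ lands in $H_{\ge 4}=H_4$ only when $j=1$. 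So $H_1\otimes H_3\to H_4$ is the unique genuinely non-monomial case, and it is dispatched by Proposition \ref{h1h3 vanish}. Assembling these: all of $H_1\otimes H_1$, $H_1\otimes H_2$, $H_2\otimes H_2$, $H_1\otimes H_3$ vanish, hence all products in $H_{\ge1}(K^R)$ vanish, hence by \cite[Theorem 6.3(4)]{katthan} $I$ is Golod. I would write the converse as a short paragraph invoking Propositions \ref{product condition} and \ref{h1h3 vanish} on each product type in turn, then cite Katthän to finish.
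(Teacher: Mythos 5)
Your proposal is correct and follows essentially the same route as the paper: the four inclusions are the $n=4$ instances of Theorem \ref{necessary cond} via the boundary ideals of Example \ref{4var bound ideals}, the products with both factors in $H_1$, $H_2$, or $H_4$ are handled through their monomial cycle bases and Proposition \ref{product condition}, the lone non-monomial case $H_1\otimes H_3\to H_4$ is dispatched by Proposition \ref{h1h3 vanish}, and Katth\"an's Theorem 6.3(4) converts the vanishing of products into Golodness. Your write-up just makes explicit the bookkeeping of which inclusion kills which product, which the paper leaves implicit.
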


\begin{proof}
These inclusions are exactly the ones given Theorem \ref{necessary cond} when $n=4$, given our calculation of the boundary ideals for $n=4$ in Example \ref{4var bound ideals}. They are then equivalent to the vanishing of products of monomial cycles in Koszul homology. Proposition \ref{h1h3 vanish} and the first inclusion cover the only other possible non-trivial products. 
\end{proof}

Having vanishing monomial products is not sufficient for Golodness in five variables. In fact, it is not even sufficient for weak Golodness, as a modified version of Example \ref{eg} shows. 

\begin{example}
Let $I = (x_1 x_3,x_1 x_4,x_2 x_3,x_2 x_4,x_5^2) \subset k[x_1,\dots,x_5]$. The following product of cycles
\[
(\bar x_5 e_5) \wedge (\bar x_1 e_{234} - \bar x_2 e_{134}) = \overline{x_2 x_5} e_{1345} - \overline{x_1 x_5} e_{2345} 
\]
is not a boundary. If it was the boundary of an element of the same multidegree, then we would have
\[
\partial( \pm \bar x_5 e_{12345}) = \overline{ x_2 x_5} e_{1345} - \overline{x_1 x_5} e_{2345} 
\]
which is not the case. Furthermore, $[\overline{x_2 x_5} e_{1345} - \overline{x_1 x_5} e_{2345}] = [\overline{x_3 x_5} e_{1245} - \overline{x_4 x_5} e_{1235}]$ cannot be written as a monomial cycle. Moreover, a Macaulay2 computation \cite{M2} shows that the fourth total Betti number of $I$ is 1. Thus, any product of monomial cycles into $H_4(K^R)$ must vanish (otherwise the above product could be chosen to be a monomial cycle). We conclude that the multiplication $H_1(K^R) \otimes H_3(K^R) \to H_4(K^R)$ is non-trivial, while the multiplication of monomial cycles is trivial.

\end{example}

\par
Even if all products in Koszul homology vanish in five variables, $I$ may not be Golod. Katthan \cite{katthan} gave an example of a weakly Golod ideal $I$ in five variables which is not Golod. We can conclude that this $I$ satisfies the conditions of Theorem \ref{necessary cond}, but is not Golod.

\section{Symmetric Monomial Ideals}
\label{section symm mon}
In this section, we apply Theorem \ref{necessary cond} to symmetric monomial ideals. We establish nice conditions in terms of the generators classifying vanishing of products of monomial cycles. We conjecture that these conditions in fact classify Golodness as well.
\par
Recall that a symmetric monomial ideal is a monomial ideal that is stable under all permutations of the variables. For example, the ideal $I = (x_1^2x_2, x_1^2 x_3, x_1x_2^2,x_2^2x_3,x_1x_3^2,x_2x_3^2,x_1x_2x_3)$ is a symmetric monomial ideal in $k[x_1,x_2,x_3]$ with generating partitions $\Lambda(I) = \{(2,1,0),(1,1,1)\}$.

\subsection{Vanishing Products of Monomial Cycles}
\par 
Checking the conditions of Theorem \ref{necessary cond} for a symmetric monomial ideal $I$ is simpler than for a general monomial ideal, as we don't need bother checking the ideal inclusions under permutations of variables. We can make even further simplifications. In the end, it suffices to show only $n-2$ ideal inclusions:

\begin{theorem}
\label{symm conditions}
Let $I$ be a symmetric monomial ideal. Then all products of monomial cycles in Koszul homology vanish if and only if
\[
[I:(x_1,\dots,x_{\lfloor p/2 \rfloor})][I:(x_{\lfloor p/2 \rfloor +1},\dots,x_p)] \subset I + (x_{p+1},\dots,x_n)[I:(x_1,\dots,x_p)] 
\]
for all $2 \leq p \leq n$.

\end{theorem}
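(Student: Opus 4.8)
The plan is to derive Theorem~\ref{symm conditions} from Proposition~\ref{product condition} (equivalently Theorem~\ref{necessary cond}) by using the $\mf S_n$-action to collapse its family of inclusions, and then replacing each boundary ideal by a more explicit subideal. First, since $\pi(I)=I$ for every $\pi\in\mf S_n$, applying $\pi^{-1}$ to the inclusion $[I:(x_{\pi(1)},\dots,x_{\pi(q)})][I:(x_{\pi(q+1)},\dots,x_{\pi(p)})]\subseteq B_I^{n,\{\pi(1),\dots,\pi(p)\}}$ returns it in standard position, both sides being permutation-equivariant ideals fixed (as a family) by $\pi^{-1}$. So all products of monomial cycles vanish iff $[I:(x_1,\dots,x_q)][I:(x_{q+1},\dots,x_p)]\subseteq B_I^{n,\{1,\dots,p\}}$ for all $2\le p\le n$ and $1\le q<p$. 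Moreover the product of ideals is commutative, and both $B_I^{n,\{1,\dots,p\}}$ and $I+(x_{p+1},\dots,x_n)[I:(x_1,\dots,x_p)]$ are fixed by every permutation of $\{1,\dots,p\}$, so the inclusion for $q$ is equivalent to that for $p-q$; hence it suffices to treat $1\le q\le\lfloor p/2\rfloor$.

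Two elementary observations then cut $B_I^{n,\{1,\dots,p\}}$ down to size on the ideals at hand. (i) If $u\in[I:(x_1,\dots,x_q)]$ and $v\in[I:(x_{q+1},\dots,x_p)]$ are monomials then $x_i(uv)\in I$ for every $i\le p$, so the product $uv$ always lies in $[I:(x_1,\dots,x_p)]$. (ii) A monomial $w$ with $\supp(w)\subseteq\{1,\dots,p\}$ lies in $B_I^{n,\{1,\dots,p\}}$ iff $w\in I$: a monomial in a sum of monomial ideals lies in one summand $x_{\sigma'\setminus\sigma}[I:x_{\sigma\setminus\sigma'}]$, and unless $\sigma'=\{1,\dots,p\}$ this forces some $x_j$ with $j>p$ to divide $w$, which is impossible; as every circuit contains $\sigma=\{1,\dots,p\}$ and $I^\sigma_\sigma=I$, the claim follows. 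Combining (i), (ii), Proposition~\ref{mon bound ideal}, and the circuits $\{(\{1,\dots,p\}\setminus\{i\})\cup\{k\}:k=i\text{ or }k>p\}$ of $\mathcal M^n_p[k]$ (whose associated ideal is $I+(x_{p+1},\dots,x_n)[I:x_i]$), one shows that for the ideals being tested, containment in $B_I^{n,\{1,\dots,p\}}$ is equivalent to containment in $I+(x_{p+1},\dots,x_n)[I:(x_1,\dots,x_p)]$. The target thus becomes: all products of monomial cycles vanish iff $[I:(x_1,\dots,x_q)][I:(x_{q+1},\dots,x_p)]\subseteq I+(x_{p+1},\dots,x_n)[I:(x_1,\dots,x_p)]$ for all $2\le p\le n$ and $1\le q\le\lfloor p/2\rfloor$.

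It remains to show the inclusion at $q=\lfloor p/2\rfloor$ implies it at every smaller $q$. Take monomials $u\in[I:(x_1,\dots,x_q)]$, $v\in[I:(x_{q+1},\dots,x_p)]$. If $uv\in I$ there is nothing to prove; if some $x_j$ with $j>p$ divides $uv$, one uses $uv\in[I:(x_1,\dots,x_p)]$ together with the symmetry of $I$ to see that $uv/x_j\in[I:(x_1,\dots,x_p)]$ for a suitable such $j$, whence $uv\in(x_{p+1},\dots,x_n)[I:(x_1,\dots,x_p)]$. The remaining case is $\supp(uv)\subseteq\{1,\dots,p\}$ with $uv\notin I$; here one restricts scalars to $k[x_1,\dots,x_p]$ — in which $I\cap k[x_1,\dots,x_p]$ is again symmetric and the colon ideals and their products restrict correctly — reducing to the statement: for a symmetric monomial ideal $I$ in $n$ variables, $[I:(x_1,\dots,x_{\lfloor n/2\rfloor})][I:(x_{\lfloor n/2\rfloor+1},\dots,x_n)]\subseteq I$ implies $[I:(x_1,\dots,x_q)][I:(x_{q+1},\dots,x_n)]\subseteq I$ for all $q$.

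I expect this last implication to be the main obstacle; equivalently it asks that a monomial $uv\notin I$ with $x_iu\in I$ for $i\le q$ and $x_jv\in I$ for $q<j\le n$ can be replaced by a monomial $u'v'\notin I$ witnessing the balanced split. I would attack it through the Hall-type description of membership in a symmetric monomial ideal — namely $x^a\in I$ iff the decreasing rearrangement of $a$ dominates some $\lambda\in\Lambda(I)$ coordinatewise — and a careful transfer of ``excess'' exponents between the two factors that preserves every colon condition while keeping the product outside $I$. Once this combinatorial core is in place, assembling the theorem is routine: Steps~1--2 above and the ``$x_j\mid uv$'' case of Step~3 are bookkeeping with the explicit circuits and the symmetry, and running the reduced inclusions over all $2\le p\le n$ recovers the stated $n-1$ (equivalently, for $p=n$ the right side is $I$, so effectively $n-1$, though one checks $p=2$ is vacuous or subsumed, leaving $n-2$) conditions.
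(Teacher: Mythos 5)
Your overall architecture matches the paper's (reduce the boundary ideal to $I+(x_{p+1},\dots,x_n)[I:(x_1,\dots,x_p)]$, then reduce to the balanced split $q=\lfloor p/2\rfloor$), but both reductions are asserted rather than proved, and in one place the justification you cite is actually insufficient. The critical step you wave through is the claim that if $x_j\mid uv$ for some $j>p$, then ``$uv\in[I:(x_1,\dots,x_p)]$ together with the symmetry of $I$'' yields $uv/x_j\in[I:(x_1,\dots,x_p)]$ for a suitable $j$. Those hypotheses alone do not suffice: take $n=3$, $p=2$, $\Lambda(I)=\{(2,1,0),(1,1,1)\}$ and $w=x_1x_3$. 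Then $x_1w,x_2w\in I$, $w\notin I$, $x_3\mid w$, but $x_1w/x_3=x_1^2\notin I$, so $w\notin I+(x_3)[I:(x_1,x_2)]$. What saves the theorem is that such a $w$ is not a product of monomials $u\in I:(x_1,\dots,x_q)$, $v\in I:(x_{q+1},\dots,x_p)$; to use that, you must exploit the two-factor structure, which is exactly the paper's Lemma \ref{lemma c} (transpositions swapping a variable of $u$ with one of $v$ force all exponents in positions $1,\dots,p$ to equal a common $c$, and only then can one kill the exponents beyond $p$). The same issue undercuts your Step 2: the circuits $\{[p]\}\cup\{([p]\setminus i)\cup\{k\}:k>p\}$ only give membership in $\bigcap_{i\le p}\bigl(I+(x_{p+1},\dots,x_n)[I:x_i]\bigr)$, i.e.\ a possibly different witness $x_{k_i}$ for each $i$, not the single-$j$ membership defining $(x_{p+1},\dots,x_n)[I:(x_1,\dots,x_p)]$. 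So the passage from $B_I^{n,[p]}$ to the explicit ideal is not bookkeeping; it is where the real work (Lemma \ref{lemma c}, or in the paper's Proposition \ref{reduce boundary ideal} a multidegree-support count replacing your observation (ii)) happens, and your proposal does not supply it.

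The second gap you name yourself: the implication from the balanced split to all splits $1\le q<\lfloor p/2\rfloor$ is left as an ``expected main obstacle'' with only a proposed attack (restriction to $k[x_1,\dots,x_p]$ plus a transfer of excess exponents). Without it the backward direction of the theorem is unproved. The paper closes this by Proposition \ref{equivalence}, which converts the inclusion for the split $(q,p-q)$ into the partition conditions (a)/(b), followed by the short monotonicity proposition showing the condition for $(q,p-q)$ implies that for $(q-1,p-q+1)$ because the relevant minima of first terms over partitions of length at most $p-q$ and $p-q+1$ coincide. Your symmetry reduction to standard position, the restriction-of-scalars observation, and observation (ii) are fine, but as it stands the proposal is missing the two substantive lemmas that carry the proof.
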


We can translate these inclusion conditions into conditions on $\Lambda(I)$. For $p \geq 2$, we say that $I$ satisfies $V_p$ if one of the following holds:

\begin{enumerate}
\item[$V_p^{(1)}$:] For all partitions $\lambda \in \Lambda(I)$ of length at most $\lfloor p/2 \rfloor$ with minimal first term among partitions in $\Lambda(I)$ of length at most $\lceil p/2 \rceil$, $\lambda_1 = \lambda_2$.

\item[$V_p^{(2)}$:] There exists a $\mu \in \Lambda(I)$ with $\lceil p/2 \rceil < \ell(\mu) \leq p$ such that $\mu_1$ smaller than the minimal first term among partitions of length at most $\lceil p/2 \rceil$ in $\Lambda(I)$.

\end{enumerate}

The following is a restatement of Theorem \ref{partition conditions} with this notation.
\begin{theorem*}
Let $I$ be a symmetric monomial ideal. Then all products of monomial cycles in Koszul homology vanish if and only if $I$ satisfies $V_p$ for all $2 \leq p \leq n$. 

\end{theorem*}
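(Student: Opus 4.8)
The plan is to deduce Theorem~\ref{partition conditions} from Theorem~\ref{symm conditions} by a direct translation, for each fixed $p$, of the single ideal inclusion
\[
[I:(x_1,\dots,x_{\lfloor p/2\rfloor})]\,[I:(x_{\lfloor p/2\rfloor+1},\dots,x_p)]\subset I+(x_{p+1},\dots,x_n)[I:(x_1,\dots,x_p)]
\]
into the combinatorial statement ``$I$ satisfies $V_p$''. The first task is to compute the three colon ideals appearing above in terms of $\Lambda(I)$. For a symmetric monomial ideal and a variable set $x_{A}$ with $|A|=a$, the colon $[I:x_A]$ is again symmetric, and a monomial $m=x_1^{b_1}\cdots x_n^{b_n}$ lies in it iff for some $\lambda\in\Lambda(I)$ and some permutation $\pi$ one has $x^{\lambda_\pi}\mid x_A m$; I would repackage this as: $m\in[I:x_A]$ iff the partition obtained by sorting $(b_i)$ ``absorbs'' some $\lambda\in\Lambda(I)$ after adding $1$ to $a$ of the coordinates in the cheapest way. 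The key quantitative invariant that emerges is $\beta_a:=\min\{\lambda_1:\lambda\in\Lambda(I),\ \ell(\lambda)\le a\}$, the minimal first term among partitions of length at most $a$; this is exactly the number appearing in $V_p^{(1)}$ and $V_p^{(2)}$ (with $a=\lfloor p/2\rfloor$ and $a=\lceil p/2\rceil$). The extremal monomials to test the inclusion on are the ``corner'' monomials of the colon ideals, and for $[I:x_A]$ with $|A|=a$ the relevant smallest-degree witnesses are (up to permutation) $x_{j_1}^{\mu_1-1}\cdots$ coming from partitions $\mu$ with $\ell(\mu)\le a+1$, so the product of two such witnesses from the left and right colons has a controlled support.

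Next I would split into the two halves of $V_p$. For the ``if'' direction: assume $V_p^{(1)}$ holds, i.e.\ every $\lambda\in\Lambda(I)$ with $\ell(\lambda)\le\lfloor p/2\rfloor$ that attains $\beta_{\lceil p/2\rceil}$ has $\lambda_1=\lambda_2$. Take monomials $u\in[I:(x_1,\dots,x_{\lfloor p/2\rfloor})]$, $v\in[I:(x_{\lfloor p/2\rfloor+1},\dots,x_p)]$; I want $uv$ in the right-hand side. Because the two colon ideals are obtained from each other by the permutation swapping $\{1,\dots,\lfloor p/2\rfloor\}$ with a size-$\lfloor p/2\rfloor$ block of $\{\lfloor p/2\rfloor+1,\dots,p\}$, and because symmetric ideals are closed under that permutation, I can normalize $u$ and $v$ to ``sorted'' representatives and then show that $uv$ already has enough multiplicity on the first $p$ variables, or else receives a factor of $x_{p+1},\dots,x_n$, to land in $I+(x_{p+1},\dots,x_n)[I:(x_1,\dots,x_p)]$; the hypothesis $\lambda_1=\lambda_2$ is precisely what guarantees that the ``doubled'' multiplicities on the cheap generator survive the product. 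If instead $V_p^{(2)}$ holds, there is a $\mu\in\Lambda(I)$ with $\lceil p/2\rceil<\ell(\mu)\le p$ and $\mu_1<\beta_{\lceil p/2\rceil}$; I would argue that then $[I:(x_1,\dots,x_p)]$ already contains a power of every single variable small enough that $(x_{p+1},\dots,x_n)[I:(x_1,\dots,x_p)]$, together with $I$ itself, swallows every product $uv$ — here one uses that the left and right colons are ``small'' (their generators have first term $\ge\beta_{\lceil p/2\rceil}$, essentially by definition of $\beta$), while $[I:(x_1,\dots,x_p)]$ is ``large'' because $\mu$ with $\ell(\mu)\le p$ forces low powers into it.

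For the ``only if'' direction I would prove the contrapositive: if both $V_p^{(1)}$ and $V_p^{(2)}$ fail, exhibit monomials $u,v$ in the two colons with $uv$ outside the right-hand side. Failure of $V_p^{(1)}$ yields a partition $\lambda$ with $\ell(\lambda)\le\lfloor p/2\rfloor$, $\lambda_1=\beta_{\lceil p/2\rceil}$, and $\lambda_1>\lambda_2$ (after checking $\lambda_1=\beta_{\lfloor p/2\rfloor}=\beta_{\lceil p/2\rceil}$, which also uses the failure of $V_p^{(2)}$, since $V_p^{(2)}$ failing means no short-ish partition of length in $(\lceil p/2\rceil,p]$ beats $\beta_{\lceil p/2\rceil}$, forcing the two $\beta$'s to agree). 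From $\lambda$ I build $u=x_1^{\lambda_1-1}x_2^{\lambda_2}\cdots x_{\lfloor p/2\rfloor}^{\lambda_{\lfloor p/2\rfloor}}\in[I:(x_1,\dots,x_{\lfloor p/2\rfloor})]$ and the mirror-image $v$ supported on $x_{\lfloor p/2\rfloor+1},\dots,x_p$, so that $uv$ has first two exponents roughly $\lambda_1-1$ and then drops — not enough multiplicity anywhere to lie in $I$, and with no $x_{p+1},\dots,x_n$ factor, so not in $(x_{p+1},\dots,x_n)[I:(x_1,\dots,x_p)]$ either; the failure of $V_p^{(2)}$ is what prevents the term $(x_{p+1},\dots,x_n)[I:(x_1,\dots,x_p)]$ from rescuing $uv$. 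I expect the main obstacle to be exactly this bookkeeping: verifying that the explicit witness monomial $uv$ is genuinely outside $I+(x_{p+1},\dots,x_n)[I:(x_1,\dots,x_p)]$ requires simultaneously ruling out membership in $I$ (a multiplicity/partition-absorption check against \emph{every} $\lambda\in\Lambda(I)$) and in the second summand (which reduces to a colon computation), and making the two failure hypotheses interact correctly — in particular pinning down when $\beta_{\lfloor p/2\rfloor}=\beta_{\lceil p/2\rceil}$ versus $\beta_{\lfloor p/2\rfloor}>\beta_{\lceil p/2\rceil}$, the latter occurring exactly when $p$ is odd and the unique ``new'' partition of length $\lceil p/2\rceil$ lowers the minimum, which is the $V_p^{(2)}$-type phenomenon in disguise. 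Throughout I would lean on the fact that, $I$ being symmetric, it suffices to test the inclusion on sorted monomials, which cuts the combinatorics down to comparisons of partitions rather than arbitrary exponent vectors.
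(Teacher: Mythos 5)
Your overall route coincides with the paper's: the paper also deduces this statement from Theorem \ref{symm conditions} by translating, for $q=\lfloor p/2\rfloor$, the single inclusion into the partition conditions (its Proposition \ref{equivalence}, built on Lemma \ref{lemma c}). But that translation is the only new content you have to supply, and it is exactly where your sketch breaks down. In the ``only if'' direction your explicit witness is wrong: from a partition $\lambda$ witnessing the failure of $V_p^{(1)}$ you take $u=x_1^{\lambda_1-1}x_2^{\lambda_2}\cdots x_{\lfloor p/2\rfloor}^{\lambda_{\lfloor p/2\rfloor}}$, but this need not lie in $I:(x_1,\dots,x_{\lfloor p/2\rfloor})$. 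For instance, with $\Lambda(I)=\{(3,1,0,0)\}$ and $p=4$ you would take $u=x_1^2x_2$, yet $x_2u=x_1^2x_2^2\notin I$, so $u\notin I:(x_1,x_2)$. The correct witness has \emph{all} exponents equal to $c=\beta_{\lceil p/2\rceil}-1$ (in your notation), i.e.\ $u=x_1^c\cdots x_{\lfloor p/2\rfloor}^c$ and $v=x_{\lfloor p/2\rfloor+1}^c\cdots x_p^c$; membership in the colons then follows from $\lambda_1=c+1$, $\lambda_2\le c$, $\ell(\lambda)\le\lfloor p/2\rfloor$ and symmetry. You also misassign the roles of the hypotheses: $uv$ avoids $(x_{p+1},\dots,x_n)[I:(x_1,\dots,x_p)]$ automatically, simply because no variable of index $>p$ divides it, while the failure of $V_p^{(2)}$ is what you need in order to conclude $uv\notin I$ (it rules out a partition of length in $(\lceil p/2\rceil,p]$ with small first part dividing the all-equal-exponent monomial).

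In the ``if'' direction the argument is only heuristic, and the missing idea is precisely the paper's Lemma \ref{lemma c}: any product $fg$ of monomials from the two colon ideals lying outside $I+(x_{p+1},\dots,x_n)[I:(x_1,\dots,x_p)]$ must equal $x_1^c\cdots x_p^c$ with $c=\beta_{\lceil p/2\rceil}-1$. This is proved by applying transpositions $(i,j)$ exchanging an exponent of $f$ with one of $g$ and using that $I$ is symmetric, and once it is in hand either $V_p^{(1)}$ or $V_p^{(2)}$ immediately forbids such a witness. Your substitutes do not fill this hole: ``normalizing $u$ and $v$ to sorted representatives'' is not available, since the needed conclusion is a comparison between the exponents of $u$ and those of $v$ (and the right-hand ideal is only invariant under permutations respecting the blocks $\{1,\dots,\lfloor p/2\rfloor\}$, $\{\lfloor p/2\rfloor+1,\dots,p\}$, $\{p+1,\dots,n\}$); moreover your description of the colon conflates the ideal colon $I:(x_i:i\in A)=\bigcap_{i\in A}(I:x_i)$ with the colon by the single monomial $\prod_{i\in A}x_i$, which is strictly larger in general. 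Testing the inclusion on products of monomial generators of the two colons is legitimate, but without the structural lemma the claims that ``$\lambda_1=\lambda_2$ guarantees the doubled multiplicities survive'' and that $V_p^{(2)}$ makes the right-hand side ``swallow every product'' remain assertions rather than proofs.
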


\begin{example}

In the course of proving Theorem \ref{partition conditions}, we will show that $V_p$ is equivalent to vanishing of products of monomial cycles into homological degree $p$. Let us translate of $V_2,V_3,V_4$ which, by Theorem \ref{four var}, classify Golod symmetric monomial ideals in four or fewer variables.
\par
\begin{enumerate}
\item[$V_2^{(1)}$:] For all partitions $\lambda$ of length 1 in $\Lambda(I)$, $\lambda_1 = \lambda_2$ in $\Lambda(I)$ (equivalently, there no partitions of length 1 in $\Lambda(I)$).
\item[$V_2^{(2)}$:] There is a $\mu \in \Lambda(I)$ of length 2 in $\Lambda(I)$.
\item[$V_3^{(1)}$:] For all partitions $\lambda$ of length 1 with minimal first term among partitions of length at most 2 in $\Lambda(I)$, $\lambda_1 = \lambda_2$ (equivalently, either there is no partition of length 1 or there is a partition of length 2 in $\Lambda(I)$).
\item[$V_3^{(2)}$:] There is a $\mu \in \Lambda(I)$ of length 3 with $\mu_1$ smaller than the minimial first term among partitions of length at most $2$ in $\Lambda(I)$.
\item[$V_4^{(1)}$:] For all partitions $\lambda$ of length at most 2 with minimal first term among partitions of length at most 2 in $\Lambda(I)$, $\lambda_1 = \lambda_2$.
\item[$V_4^{(2)}$:] There is a $\mu \in \Lambda(I)$ of length 3 or 4 with $\mu_1$ smaller than the minimial first term among partitions of length at most $2$ in $\Lambda(I)$.

\end{enumerate}

We can see that $V_2$ implies $V_3$, as both $V_2^{(1)}$ and $V_2^{(2)}$ imply $V_3^{(1)}$. Most other combinations of $V_p$ for $p = 2,3,4$ are independent, as the following table of examples in four variables shows:

\begin{center}
    
    \begin{tabular}{|c|c|}
        \hline 
        $\Lambda(I)$ &  $p$ such that $I$ is $V_p$\\ \hline \hline
        $(3,1,0,0),(2,2,0,0)$ & 2,3,4 - Golod \\ \hline
        $(3,0,0,0),(2,1,0,0)$ & 2,3 \\ \hline
        $(3,0,0,0),(2,1,1,0)$ & 3,4 \\ \hline
        $(2,0,0,0),(1,1,1,1)$ & 4 \\ \hline
        $(2,0,0,0)$ & none \\ \hline
    \end{tabular}
    
\end{center}

\end{example}
There are no symmetric monomial ideals in four variables which only satisfy $V_3$. If $I$ satisfies $V_3$ but not $V_2$, then there are partitions of legnths 1 and 3 in $\Lambda(I)$, but not of length 2, which implies $V_4^{(2)}$. 

\par
The $V_p$ are necessary conditions for $I$ to be Golod. One interesting class of Golod symmetric monomial ideals are the symmetric shifted ideals (see Section \ref{section mon basis} for more details). The ideal $I$ with $\Lambda(I) = \{(3,1,0,0),(2,2,0,0)\}$ from Example \ref{nonvanishing prod} shows that not all Golod symmetric monomial ideals are symmetric shifted.
\par

The reductions we use to prove Theorems \ref{symm conditions} and \ref{partition conditions} rely on the following lemma.

\begin{lemma}
\label{lemma c}
Let $I$ be a symmetric monomial ideal. Let $p$ be an integer with $2 \leq p \leq n$ and $q$ be an integer with $1 \leq q \leq \lfloor p/2 \rfloor$. Suppose that $f \in I:(x_1,\dots,x_q)$ and $g \in  I:(x_{q+1},\dots,x_p)$ be monomials such that $fg \not\in I +(x_{p+1},\dots,x_n)[I:(x_1,\dots,x_p)]$. Then there exists a $c$ such that $f = x_1^c \dots x_q^c$ and $g = x_{q+1}^c \dots x_p^c$. Furthermore, $c = \min \{\lambda_1: \lambda \in \Lambda(I) \text{ and } \ell(\lambda) \leq p-q\} - 1$.
\end{lemma}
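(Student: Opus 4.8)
The plan is to analyze the condition $fg \notin I + (x_{p+1},\dots,x_n)[I:(x_1,\dots,x_p)]$ by carefully tracking which variables can appear in $f$ and $g$ and with what exponents, exploiting the symmetry of $I$ heavily. First I would observe that since $fg \notin I$, the monomial $fg$ must have support contained in $\{x_1,\dots,x_p\}$: indeed if some $x_j$ with $j > p$ divides $fg$, then writing $fg = x_j \cdot m$ where $m \in I:(x_1,\dots,x_p)$ would need to fail, but I claim $m$ actually does lie in $I:(x_1,\dots,x_p)$ using that $f \in I:(x_1,\dots,x_q)$, $g \in I:(x_{q+1},\dots,x_p)$, and symmetry — more precisely, one shows $fg \in I:(x_i)$ for every $i \in \{1,\dots,p\}$ by applying a transposition of $\mf S_n$ to move $i$ into the appropriate block, using that $I$ is symmetric. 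So $fg \in I:(x_1,\dots,x_p)$, and then any variable $x_j$ ($j>p$) dividing $fg$ would put $fg$ into $(x_{p+1},\dots,x_n)[I:(x_1,\dots,x_p)]$, a contradiction. Hence $\supp(fg) \subseteq \{x_1,\dots,x_p\}$, so $\supp(f) \subseteq \{x_1,\dots,x_q\}$ and $\supp(g) \subseteq \{x_{q+1},\dots,x_p\}$ (since $f,g$ are monomials with $f$ only constrained on the first block and $g$ on the second — wait, one needs to be slightly careful, but the point is $f,g$ together only involve $x_1,\dots,x_p$, and I would argue each of $f,g$ individually is supported where expected, or at worst reorganize).

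Next I would pin down the exponents. Write $f = \prod_{i=1}^q x_i^{a_i}$ and $g = \prod_{i=q+1}^p x_i^{b_i}$. The key symmetry trick: since $f \in I:(x_1,\dots,x_q)$ and $I$ is symmetric, applying any permutation fixing $\{1,\dots,q\}$ setwise shows all the relevant data is symmetric in the $a_i$; combined with the requirement that $fg \notin I$ while $x_i \cdot fg \in I$ for each $i \le p$, I would extract that all $a_i$ are equal to a common value $c'$ and all $b_i$ equal to a common value $c''$, and then that $c' = c''=: c$. The cleanest route is probably: $fg \notin I$ means no permutation of any $\lambda \in \Lambda(I)$ divides $fg$; but $x_i fg \in I$ for all $i \le p$ means that after bumping any single exponent by one, some such permutation does divide. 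Symmetry of the setup forces the exponent vector of $fg$ to be the "staircase-flat" vector $(c,c,\dots,c,0,\dots,0)$ with $c$ in the first $p$ slots — any non-flat vector would have an $i$ whose bump is "useless." Then from $x_1 fg \in I$ we get a partition $\lambda$ with $\ell(\lambda) \le p$ whose sorted version is dominated by $(c+1,c,\dots,c,0,\dots)$, and minimality considerations will force $\ell(\lambda) \le p - q$ and $\lambda_1 \le c+1$; conversely $fg \notin I$ prevents $\lambda_1 \le c$ for any such $\lambda$. This gives $c+1 = \min\{\lambda_1 : \lambda \in \Lambda(I),\ \ell(\lambda) \le p-q\}$, i.e. $c = \min\{\lambda_1 : \ell(\lambda) \le p-q\} - 1$.

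The main obstacle I anticipate is the bookkeeping in the previous paragraph: precisely controlling, via the divisibility-by-a-permutation-of-a-partition relation, why the exponent vector must be exactly flat and why the relevant partitions must have length at most $p-q$ rather than just at most $p$. The subtlety is that a partition $\lambda$ with $q < \ell(\lambda) \le p$ and small $\lambda_1$ could in principle also divide $x_1 fg$; one must show that if such a $\lambda$ existed it would already force $fg \in I$ (using $f \in I:(x_1,\dots,x_q)$ to cover the "extra" $\ell(\lambda) - q$ or so variables beyond the first block), contradicting our hypothesis — this is exactly where $V_p^{(2)}$-type phenomena are lurking, and handling it carefully is the crux. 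I would structure this as a short sub-lemma: \emph{if $fg \notin I$ with $\supp(fg) = \{x_1,\dots,x_p\}$ and all exponents equal to $c$, and $f \in I:(x_1,\dots,x_q)$, then no $\lambda \in \Lambda(I)$ with $\ell(\lambda) > p - q$ has $\lambda_1 \le c$.} Once that is in place, assembling the final formula for $c$ is routine. Throughout, the recurring tool is: \emph{$m \in I$ iff some coordinate permutation of some $\lambda \in \Lambda(I)$ divides $m$}, plus the freedom to permute $f$, $g$, and the two blocks by elements of $\mf S_n$ under which $I$ is invariant.
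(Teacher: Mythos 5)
There is a genuine gap at the very first step, where you rule out variables $x_j$ with $j>p$. You argue: $fg\in I:(x_1,\dots,x_p)$ (which is indeed immediate from $x_if\in I$ for $i\le q$ and $x_ig\in I$ for $q<i\le p$), and therefore any $x_j\mid fg$ with $j>p$ would put $fg$ into $(x_{p+1},\dots,x_n)[I:(x_1,\dots,x_p)]$. That implication is false: membership of the monomial $fg$ in $x_j[I:(x_1,\dots,x_p)]$ requires $fg/x_j\in I:(x_1,\dots,x_p)$, and colon membership is not inherited by divisors, so knowing $fg$ itself lies in the colon ideal gives you nothing. Concretely, take $n=3$, $p=2$, $q=1$, $\Lambda(I)=\{(2,1,0)\}$, and $w=x_3^2$: then $x_1w,x_2w\in I$, so $w\in I:(x_1,x_2)$ and $x_3\mid w$, yet $w\notin I+x_3[I:(x_1,x_2)]$ because $x_3\notin I:(x_1,x_2)$. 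So the deduction you invoke fails for a general monomial in the colon; to exclude $x_j$ ($j>p$) from $\supp(fg)$ you must genuinely use the factorization $fg$ with $f\in I:(x_1,\dots,x_q)$ and $g\in I:(x_{q+1},\dots,x_p)$ separately, not just $fg\in I:(x_1,\dots,x_p)$.

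The paper's proof does this in the opposite order, and the order matters. It first proves the exponents of $x_1,\dots,x_p$ in $fg$ are all equal (by applying a transposition $(i,j)$ with $i\le q<j\le p$: if $a_i>b_j$ the permuted monomial is divisible by $x_jg\in I$, contradicting $fg\notin I$; your ``any non-flat vector has a useless bump'' sketch should be replaced by this two-block comparison). Only then, for $k>p$, does the hypothesis $fg\notin x_k[I:(x_1,\dots,x_p)]$ give some $i\le p$ with $x_i\,fg/x_k\notin I$, and the flatness plus symmetry upgrades ``some $i$'' to ``all $i$''; taking $i=1$ and using $x_1f\in I$ forces $x_k\nmid g$, and taking $i=p$ and using $x_pg\in I$ forces $x_k\nmid f$. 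Finally, note that your sketch of the formula for $c$ does not make clear where $q\le\lfloor p/2\rfloor$ enters: in the paper it is exactly the inequality $q\le p-q$ that makes $x_1f=x_1^{c+1}x_2^c\cdots x_q^c$ detectable only by partitions of length at most $p-q$, which rules out $c$ being smaller than the claimed minimum; your proposed sub-lemma (no $\lambda$ with $\ell(\lambda)>p-q$ has $\lambda_1\le c$) is trivially true for all $\ell(\lambda)\le p$ since such a $\lambda$ would divide $x_1^c\cdots x_p^c$, and it is not the statement that closes the argument.
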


\begin{proof}
Since $fg \not\in I$, we can write $f = x_1^{a_1} \dots x_q^{a_q} x_{p+1}^{\alpha_{p+1}} \dots x_n^{\alpha_n}$ and $g = x_{q+1}^{b_{q+1}} \dots x_p^{b_p} x_{p+1}^{\beta_{p+1}} \dots x_n^{\beta_n}$.
\par
Suppose that $a_i > b_j$ for some $i$ and $j$. Then, applying the permutation $(i,j)$ to $fg$, we get
\[
x_j^{a_i-b_j} g (\frac{f}{x_i^{a_i - b_j}})
\]
which is in $I$, as $x_j g \in I$. Since $fg \not \in I$, we conclude that $a_i \leq b_j$ for all $i$ and $j$. A similar argument yields the other inequality, so all $a_i$ and $b_j$ are the same constant $c$.
\par
Next, we show that all $\alpha_k = \beta_k = 0$ for $k > p$. Since $fg \not \in (x_{p+1},\dots,x_n)[I:(x_1,\dots,x_p)]$ we know that for all $k > p$, $fg \not \in x_k[I:(x_1,\dots,x_p)]$. This means that either $\alpha_k = \beta_k = 0$ or 
\[
x_i \frac{fg}{x_k} = x_1^c \dots x_i^{c+1} \dots x_p^c x_{p+1}^{\alpha_{p+1} + \beta_{p+1}} \dots x_{k}^{\alpha_{k} + \beta_{k}-1} \dots  x_{n}^{\alpha_{n} + \beta_{n}}
\]
is not in $I$ for some $i, 1 \leq i \leq p$. The latter case is equivalent to $x_i \frac{fg}{x_k} \not\in I$ for \textit{all} $i, 1 \leq i \leq p$ since $I$ is symmetric. In particular, $x_1 fg \not \in I$, so $\beta_k = 0$ (lest $x_1 f$, which is in $I$, divide $x_1 \frac{fg}{x_k}$). Since $x_p \frac{fg}{x_k} \not\in I$, we similarly conclude $\alpha_k = 0$. In either case, $\alpha_k = \beta_k = 0$. This proves the first statement.
\par
We now prove the last statement. Let $c' = \min \{\lambda_1 : \lambda \in \Lambda(I) \text{ and } \ell(\lambda) \leq p-q \} - 1$. Suppose $c' < c$. Then there exists a $\mu \in \Lambda(I)$ of length at most $p-q$ such that $\mu_i \leq c$ for all $i$.  But then $x_{q+1}^{\mu_1} \dots x_p^{\mu_{p-q}}$, which is in $I$, divides $g = x_{q+1}^c \dots x_q^c$, which is a contradiction. If $c < c'$ then $c + 1 < \lambda_1$ for all $\lambda \in \Lambda(I)$ of length at most $p-q$. But then $x_1 f = x_1^{c+1} x_2^c \dots x_q^c$ is not in $I$, so $f \not\in I:(x_1,\dots,x_q)$, which is another contradiction. We conclude that $c = c'$. 

\end{proof} 

\begin{proposition}
\label{reduce boundary ideal}
Let $I$ be a symmetric monomial ideal, $2 \leq p \leq n$ and $1 \leq q \leq \lfloor p/2 \rfloor$. Then
\[
[I:(x_1,\dots,x_q)][I:(x_{q+1},\dots,x_p)] \subset B^{n,[p]}_I
\]
 if and only if
\[
[I:(x_1,\dots,x_q)][I:(x_{q+1},\dots,x_p)] \subset  I +(x_{p+1},\dots,x_n)[I:(x_1,\dots,x_p)]
\]

\end{proposition}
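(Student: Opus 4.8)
\textbf{Proof proposal for Proposition \ref{reduce boundary ideal}.}

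The plan is to prove the two inclusions of ideals are equivalent by showing that the only obstruction to the first inclusion already lives in the much smaller ideal $I + (x_{p+1},\dots,x_n)[I:(x_1,\dots,x_p)]$. One direction is immediate: by Proposition \ref{mon bound ideal} (applied with $\sigma = [p]$), we have $I + (x_{p+1},\dots,x_n)[I:(x_1,\dots,x_p)] \subset B^{n,[p]}_I$, so if $[I:(x_1,\dots,x_q)][I:(x_{q+1},\dots,x_p)]$ is contained in the smaller ideal it is automatically contained in $B^{n,[p]}_I$. The content is the reverse implication: assuming the product ideal is contained in $B^{n,[p]}_I$, I must deduce it is contained in $I + (x_{p+1},\dots,x_n)[I:(x_1,\dots,x_p)]$.

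For the reverse implication I would argue by contraposition. Suppose the second (smaller) inclusion fails, so there exist monomials $f \in I:(x_1,\dots,x_q)$ and $g \in I:(x_{q+1},\dots,x_p)$ with $fg \notin I + (x_{p+1},\dots,x_n)[I:(x_1,\dots,x_p)]$. Now invoke Lemma \ref{lemma c}: this forces $f = x_1^c \cdots x_q^c$ and $g = x_{q+1}^c \cdots x_p^c$, where $c = \min\{\lambda_1 : \lambda \in \Lambda(I),\ \ell(\lambda) \le p-q\} - 1$, so that $fg = x_1^c \cdots x_p^c = x_{[p]}^c$. The goal is then reduced to a single, very concrete claim: that the monomial $x_{[p]}^c$ is \emph{not} in $B^{n,[p]}_I$, which by Theorem \ref{main thm} is the assertion that $\bar x_{[p]}^c e_{[p]}$ is a nonzero class in Koszul homology (note it is a genuine cycle, since $x_i x_{[p]}^c$ is divisible by $x_i^{c+1}$, and $x_i^{c+1}$ times the appropriate other variables lies in $I$ because $c+1 \geq \lambda_1$ for a suitable short partition — here one must be slightly careful and use $f \in I:(x_1,\dots,x_q)$ directly, which gives $x_i f \in I$ hence $x_i fg \in I$). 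Given $fg \notin B^{n,[p]}_I$ and $f,g$ in the respective colon ideals, we exhibit a failure of the first inclusion, completing the contrapositive.

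The main obstacle is establishing $x_{[p]}^c \notin B^{n,[p]}_I$, i.e. that $\bar x_{[p]}^c e_{[p]}$ is not a boundary. By the definition of the boundary ideal this means producing, for \emph{every} circuit $C \in \mathcal C^n_p[k]$ containing $[p]$, a witness that $x_{[p]}^c \notin \sum_{\sigma' \in C} I^{[p]}_{\sigma'}$ — or, equivalently and more efficiently, running the linear-algebra argument in the proof of Theorem \ref{main thm}: showing the row $v_{[p]}$ of $M(n,p)$ is linearly independent of the rows $v_{\sigma'}$ with $x_{[p]}^c \notin I^{[p]}_{\sigma'}$. The key computation is to determine exactly which $\sigma'$ satisfy $x_{[p]}^c \in I^{[p]}_{\sigma'} = x_{\sigma' \setminus [p]}[I : x_{[p] \setminus \sigma'}]$: writing $\sigma' = ([p] \setminus T) \cup U$ with $T \subseteq [p]$ and $U \subseteq [n]\setminus[p]$ of equal size, membership forces $U = \emptyset$ (since $x_{[p]}^c$ has no variables outside $[p]$ and $fg \notin (x_{p+1},\dots,x_n)[\cdots]$ rules out the relevant cancellations, reusing the Lemma \ref{lemma c} analysis), so $\sigma' \subsetneq$-comparable configurations reduce to $\sigma' = [p]$ itself. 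One then checks that removing all rows $v_{\sigma'}$ with $\sigma' \ne [p]$ and $x_{[p]}^c \in I^{[p]}_{\sigma'}$ still leaves $v_{[p]}$ independent — because, as in the $|\sigma|=n-1$ and $|\sigma|=1$ computations in Example \ref{4var bound ideals}, no circuit of $\mathcal M^n_p[k]$ containing $[p]$ survives once those rows are deleted. I expect this matroid-theoretic bookkeeping — identifying precisely the surviving rows and confirming $[p]$ is a coloop of the restricted matroid — to be the technical heart of the argument, but it is exactly parallel to the monomial-cycle computations already carried out in the preliminaries and in Example \ref{4var bound ideals}.
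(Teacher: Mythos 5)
Your setup matches the paper exactly: the easy containment via Proposition \ref{mon bound ideal}, the contrapositive, and the use of Lemma \ref{lemma c} to reduce everything to showing that $x_1^c\cdots x_p^c \notin B^{n,[p]}_I$, i.e.\ that $\overline{x_1^c\cdots x_p^c}\, e_{[p]}$ is not a boundary. But the key step is where the proposal goes wrong, in two ways. First, the criterion you propose to verify is inverted. By the definition $B^{n,[p]}_I = \bigcap_C \sum_{\sigma'\in C} I^{[p]}_{\sigma'}$, non-membership requires a witness for \emph{some} circuit containing $[p]$, not every circuit; and in the linear-algebra formulation from the proof of Theorem \ref{main thm}, the cycle is a boundary precisely when $v_{[p]}$ is linearly \emph{independent} of the rows $v_{\sigma'}$ with $u\notin I^{[p]}_{\sigma'}$. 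So ``showing the row $v_{[p]}$ is linearly independent of those rows'' and ``confirming $[p]$ is a coloop of the restricted matroid'' are exactly the conditions you must show \emph{fail}; establishing them would prove $x_{[p]}^c \in B^{n,[p]}_I$, the opposite of what you need. Second, even with the orientation fixed, the step is not carried out. Your correct observation that $x_{[p]}^c$ involves no variable $x_j$ with $j>p$ shows $x_{[p]}^c \notin I^{[p]}_{\sigma'}$ for every $\sigma'$ (including $\sigma'=[p]$, since $x_{[p]}^c\notin I$), so no rows get deleted at all and what remains to be shown is that $v_{[p]}$ is linearly \emph{dependent} on the remaining rows of $M(n,p)$ — equivalently, that some circuit of $\mathcal M^n_p[k]$ contains $[p]$ (true for $p<n$, e.g.\ by the transitive $\mathfrak S_n$-action together with the fact that the rows of $M(n,p)$ are dependent, since $\rk M(n,p)=\binom{n-1}{p}<\binom{n}{p}$), but this is never argued in the proposal, and your claim that ``no circuit containing $[p]$ survives'' is false since nothing is deleted.

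The paper sidesteps all of this matroid bookkeeping with a two-line multidegree argument: $\overline{x_1^c\cdots x_p^c}\, e_{[p]}$ has multidegree nonzero in exactly $p$ coordinates, whereas any multihomogeneous element of $K^R_{p+1}$ has multidegree nonzero in at least $p+1$ coordinates; since $\partial$ preserves multidegree, the cycle cannot be a boundary. If you want to keep your route through Theorem \ref{main thm}, the fix is to flip the goal as above (exhibit dependence / exhibit one circuit through $[p]$); but the multidegree argument is both simpler and avoids having to know anything about the circuits of $\mathcal M^n_p[k]$.
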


\begin{proof}
By Proposition \ref{mon bound ideal}, $(x_{p+1},\dots,x_n)[I:(x_1,\dots,x_p)] \subset B^{n,[p]}_I$. It is enough, then, to check that if $f$ and $g$ are defined as in Lemma \ref{lemma c}, then $[\bar f e_{1 \dots q}][\bar g e_{q+1 \dots p}]$ is not a boundary. 
\par
By Lemma \ref{lemma c}, $fg = x_1^c \dots x_p^c$ for some $c$. So, $[\bar f e_{1 \dots q}][\bar g e_{q+1 \dots p}] = [\overline{x_1^c \dots x_p^c} e_{1 \dots p}]$.  Now $x_1^c \dots x_p^c \not \in I$ by assumption. $\overline{x_1^c \dots x_p^c} e_{1 \dots p}$ is not a boundary, because its multidegree is non-zero in exactly $p$ places. But multihomogeneous elements of $K^R_{p+1}$ with non-zero multidegrees have multidegrees which are non-zero in at least $p+1$ places, so summands of their images under $\partial$ will as well.

\end{proof}

Proposition \ref{reduce boundary ideal} shows that if a product of monomial cycles is non-zero, it must have a precise form. See Example \ref{nonvanishing prod} for a concrete example. Futhermore, if a product of monomial cycles is a boundary, it is the boundary of a monomial element. 
\par
We now establish a condition on the generators of $I$ determining monomial product vanishing:

\begin{proposition}
\label{equivalence}
Let $I,p,q$ as above. Then
\[
[I:(x_1,\dots,x_q)][I:(x_{q+1},\dots,x_p)] \subset  I +(x_{p+1},\dots,x_n)[I:(x_1,\dots,x_p)]
\]
if and only if one of the following hold:

\begin{itemize}
    \item [(a)] For all partitions $\lambda \in \Lambda(I)$ of length at most $q$ with first term minimal among partitions in of length at most $p-q$ in $\Lambda(I)$, $\lambda_1 = \lambda_2$.
    
    \item [(b)] There exists a $\mu \in \Lambda(I)$ with $p-q < \ell(\mu) \leq p$ and $\mu_1$ smaller than the minimal first term among partitions of length at most $p-q$ in $\Lambda(I)$.
\end{itemize}

\end{proposition}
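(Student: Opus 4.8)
The plan is to analyze when the inclusion
\[
[I:(x_1,\dots,x_q)][I:(x_{q+1},\dots,x_p)] \subset I + (x_{p+1},\dots,x_n)[I:(x_1,\dots,x_p)]
\]
can \emph{fail}, using Lemma \ref{lemma c} to pin down the only possible witness, and then to match this failure condition with the negation of ``(a) or (b).'' First I would observe that, by Lemma \ref{lemma c}, any product $fg$ of monomials $f \in I:(x_1,\dots,x_q)$ and $g \in I:(x_{q+1},\dots,x_p)$ that is \emph{not} in $I + (x_{p+1},\dots,x_n)[I:(x_1,\dots,x_p)]$ must have the rigid shape $f = x_1^c\cdots x_q^c$, $g = x_{q+1}^c\cdots x_p^c$, with $c$ forced to equal $c_0 - 1$ where $c_0 := \min\{\lambda_1 : \lambda \in \Lambda(I),\ \ell(\lambda) \le p-q\}$. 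So the inclusion fails if and only if there exist monomials of exactly this form lying in the respective colon ideals but with $x_1^c\cdots x_p^c \notin I + (x_{p+1},\dots,x_n)[I:(x_1,\dots,x_p)]$; and since $x_1^c\cdots x_p^c$ is supported only on $x_1,\dots,x_p$, membership in the second summand is equivalent to membership in $I$. Thus failure is equivalent to: (i) $x_1^c\cdots x_q^c \in I:(x_1,\dots,x_q)$, (ii) $x_{q+1}^c\cdots x_p^c \in I:(x_{q+1},\dots,x_p)$, and (iii) $x_1^c\cdots x_p^c \notin I$, with $c = c_0 - 1$.

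Next I would translate each of (i), (ii), (iii) into statements about $\Lambda(I)$. For (i): $x_1 \cdot x_1^c\cdots x_q^c = x_1^{c+1}x_2^c\cdots x_q^c \in I$ means some $\lambda \in \Lambda(I)$ has a permutation dividing it; because the exponent vector has one coordinate equal to $c+1 = c_0$ and $q-1$ coordinates equal to $c \le c_0$, such a $\lambda$ must have $\ell(\lambda) \le q$, $\lambda_1 \le c_0$, and $\lambda_2 \le c_0 - 1 < \lambda_1$ forced whenever $\lambda_1 = c_0$ — more precisely (i) holds iff there is $\lambda \in \Lambda(I)$ with $\ell(\lambda)\le q$ and $\lambda_1 \le c_0$, and for membership to genuinely require the ``$x_1$-bump'' we need $\lambda_1 = c_0$ with $\lambda_2 < \lambda_1$ (if some $\lambda$ of length $\le q$ already had all parts $\le c = c_0-1$ it would divide $f$ itself, contradicting (iii) via $x_1^c\cdots x_p^c$). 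The key point to extract is: (i) and (iii) together force the existence of $\lambda \in \Lambda(I)$ with $\ell(\lambda) \le q$, $\lambda_1 = c_0$ (hence $\lambda_1$ minimal among lengths $\le p-q$, since by definition $c_0$ is that minimum and $q \le p-q$), and $\lambda_1 \ne \lambda_2$ — i.e. the \emph{negation} of (a). Then I would check the converse: if $V_p^{(1)}$ = (a) fails, the offending $\lambda$ produces exactly the monomials making the inclusion fail. This establishes ``(a) fails'' $\Rightarrow$ ``inclusion fails,'' modulo the interference of condition (b).

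The remaining work is to see that (b) rescues the inclusion precisely when it would otherwise fail for lack of (a). Here I would argue: if (b) holds, then there is $\mu \in \Lambda(I)$ with $p-q < \ell(\mu) \le p$ and $\mu_1 < c_0$, i.e. $\mu_1 \le c_0 - 1 = c$. Then $x_1^{\mu_1}\cdots x_{\ell(\mu)}^{\mu_{\ell(\mu)}}$ (a permutation of $\mu$ placing its support inside $\{1,\dots,p\}$, possible since $\ell(\mu) \le p$) divides $x_1^c\cdots x_p^c$, so in fact $x_1^c\cdots x_p^c \in I$, contradicting (iii) — hence no witness exists and the inclusion holds. Conversely, if both (a) and (b) fail, I would reconstruct a witness: take $\lambda \in \Lambda(I)$ of length $\le q$ with $\lambda_1 = c_0$ minimal and $\lambda_1 \ne \lambda_2$, set $c = c_0 - 1$, and verify (i) via the $\lambda$-bump, (ii) symmetrically (using $\ell(\lambda)\le q \le p-q$ and the same $c$), and (iii) because ($\neg$(b)) guarantees no $\mu$ of length between $p-q+1$ and $p$ has $\mu_1 \le c$, while no $\mu$ of length $\le p-q$ has $\mu_1 \le c = c_0-1 < c_0$ by definition of $c_0$, and $\mu$ of length $> p$ cannot divide a monomial supported on $p$ variables. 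I expect the \textbf{main obstacle} to be the careful bookkeeping in this last step: showing that $\neg$(a) genuinely \emph{does} produce a valid witness requires simultaneously checking (i), (ii), and especially (iii), and the subtle interplay is that we must use that $c_0$ is computed over partitions of length $\le p-q$ (not $\le q$), so there is an asymmetry between the two colon ideals that needs to be handled — one must confirm that a $\lambda$ of length $\le q$ with $\lambda_1 = c_0$ actually exists (not merely one of length $\le p-q$), which is where the precise phrasing ``$\lceil p/2\rceil$ vs $\lfloor p/2\rfloor$'' in the statement comes from and must be tracked.
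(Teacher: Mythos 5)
Your proposal is correct and takes essentially the same route as the paper: both reduce, via Lemma \ref{lemma c}, to the unique candidate witness $f = x_1^c\cdots x_q^c$, $g = x_{q+1}^c\cdots x_p^c$ with $c+1 = \min\{\lambda_1 : \lambda \in \Lambda(I),\ \ell(\lambda)\le p-q\}$, and then show the inclusion fails exactly when both (a) and (b) fail (the paper phrases this as the equivalence of the negations, using the same observations that $x_1 f\in I$ forces a length-$\le q$ partition with $\lambda_1=c+1>\lambda_2$, and that $fg\notin I$ forces $\mu_1\ge c+1$ for all $\mu$ of length in $(p-q,p]$). No substantive difference.
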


\begin{proof}
We will show that the negations are equivalent, namely,
\[
[I:(x_1,\dots,x_q)][I:(x_{q+1},\dots,x_p)] \not\subset  I +(x_{p+1},\dots,x_n)[I:(x_1,\dots,x_p)]
\]
if and only if both of the following hold
\begin{enumerate}

\item There exists a $\lambda \in \Lambda(I)$ of length at most $q$ with first term minimal among partitions of length at most $p-q$ in $\Lambda(I)$ such that $\lambda_1 > \lambda_2$.

\item For all $\mu \in \Lambda(I)$ with $p-q < \ell(\mu) \leq p$, we have $\mu_1 \geq \min\{\lambda_1: \lambda \in \Lambda(I) \text{ and } \ell(\lambda) \leq p-q\}$.

\end{enumerate} 

For the forward direction, we apply Lemma \ref{lemma c} to say that $f = x_1^c \dots x_q^c \in I:(x_1,\dots,x_q)$, $g = x_{q+1}^c \dots x_p^c \in I:(x_{q+1},\dots,x_p)$, and $fg = x_1^c \dots x_p^c \not\in I$, where $c + 1 = \min\{\lambda_1: \lambda \in \Lambda(I) \text{ and } \ell(\lambda) \leq p-q\}$. Since $x_1 f$ and $x_{q+1} g$ are in $I$, there is a partition $\lambda$ of length at most $q$ such that $x^\lambda$ divides $x_1 f$. A permutation of the same $x^\lambda$ will $x_{q+1} g$. Since $f$ and $g$ are not in $I$, we must have $\lambda_1 = c+1$ and $\lambda_1 > \lambda_2$. This proves 1.
\par
Since $x_1^c \dots x_p^c$ is not in $I$, we know that for any partition $\mu \in \Lambda(I)$ with $p-q< \ell(\mu) \leq p$, we have $\mu_1 \geq c+1$, lest $x^\mu$ divide $x_1^c \dots x_p^c$. This proves 2.
\par 
For the backward direction, assume 1 and 2 hold. Let $c = \min\{\lambda_1: \lambda \in \Lambda(I) \text{ and } \ell(\lambda) \leq p-q\} - 1$, $f := x_1^c \dots x_q^c$, and $g := x_{q+1}^c \dots x_p^c$. By 1, both $f \in I:(x_1,\dots,x_q)$ and $g \in I:(x_{q+1},\dots,x_p)$. By 2, $fg$ is not in $I$. Since $x_j$ does not divide $fg$ for $j > p$, $fg$ is not in $(x_{p+1},\dots,x_n)[I:(x_1,\dots,x_p)]$.

\end{proof}

\begin{proposition}
Let $I,p,q$ as above. When $q \geq 2$, 
\[
[I:(x_1,\dots,x_q)][I:(x_{q+1},\dots,x_p)] \subset  I +(x_{p+1},\dots,x_n)[I:(x_1,\dots,x_p)]
\]
implies
\[
[I:(x_1,\dots,x_{q-1})][I:(x_q,\dots,x_p)] \subset  I +(x_{p+1},\dots,x_n)[I:(x_1,\dots,x_p)]
\]
\end{proposition}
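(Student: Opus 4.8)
The plan is to translate both the hypothesis and the desired conclusion into the combinatorial conditions on $\Lambda(I)$ furnished by Proposition \ref{equivalence}, and then compare them. Since $q \geq 2$ we have $q - 1 \geq 1$, and $q - 1 < q \leq \lfloor p/2 \rfloor$, so Proposition \ref{equivalence} applies both to the pair $(p,q)$ and to the pair $(p,q-1)$. Writing $m_r := \min\{\lambda_1 : \lambda \in \Lambda(I),\ \ell(\lambda) \leq r\}$, with the convention $m_r := \infty$ when no partition of $\Lambda(I)$ has length $\leq r$ (so that $m_1 \geq m_2 \geq \cdots$), the hypothesis becomes: either (a$_q$) every $\lambda \in \Lambda(I)$ with $\ell(\lambda) \leq q$ and $\lambda_1 = m_{p-q}$ has $\lambda_1 = \lambda_2$; or (b$_q$) there is a $\mu \in \Lambda(I)$ with $p - q < \ell(\mu) \leq p$ and $\mu_1 < m_{p-q}$. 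The conclusion becomes the same statement with every subscript $q$ shifted down by one: (a$_{q-1}$) with $\ell(\lambda) \leq q-1$ and $\lambda_1 = m_{p-q+1}$, or (b$_{q-1}$) with $p - q + 1 < \ell(\mu) \leq p$ and $\mu_1 < m_{p-q+1}$. I will use throughout that $q \leq \lfloor p/2 \rfloor$ forces $2q \leq p$, hence $q - 1 \leq p - q - 1$, so any partition of length $\leq q - 1$ has length strictly less than $p - q$.

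Next I would split into two cases according to whether $m_{p-q} = m_{p-q+1}$ or $m_{p-q} > m_{p-q+1}$; these are the only possibilities since $m_r$ is weakly decreasing in $r$. In the equality case, one checks that (a$_q$) $\Rightarrow$ (a$_{q-1}$) — a partition of length $\leq q-1$ has length $\leq q$ and $m_{p-q+1} = m_{p-q}$, so the hypothesis of (a$_{q-1}$) is a special case of that of (a$_q$) — and that (b$_q$) $\Rightarrow$ (b$_{q-1}$): if the witness $\mu$ for (b$_q$) had $\ell(\mu) = p-q+1$, it would be a partition of length $\leq p-q+1$ with $\mu_1 < m_{p-q} = m_{p-q+1}$, contradicting the definition of $m_{p-q+1}$, so $\ell(\mu) \geq p - q + 2$ and $\mu$ also witnesses (b$_{q-1}$); thus the hypothesis transfers verbatim. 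In the strict case $m_{p-q} > m_{p-q+1}$, I would show the conclusion holds unconditionally: if (b$_{q-1}$) fails then (a$_{q-1}$) holds vacuously, because any $\lambda \in \Lambda(I)$ with $\ell(\lambda) \leq q-1$ has length $< p - q$, hence $\lambda_1 \geq m_{p-q} > m_{p-q+1}$, so it cannot satisfy $\lambda_1 = m_{p-q+1}$. Combining the two cases gives the desired implication.

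I expect the only delicate point to be the bookkeeping: keeping straight which minimal first term $m_r$ governs which condition, handling the degenerate possibility $m_r = \infty$, and verifying that the length inequality $q - 1 < p - q$ — precisely where the hypotheses $q \geq 2$ and $q \leq \lfloor p/2 \rfloor$ are used — really does exclude the problematic partitions in the vacuous case. It is worth noting why a more hands-on route fails: one could instead take the monomials $f = x_1^c \cdots x_{q-1}^c$ and $g = x_q^c \cdots x_p^c$ supplied by Lemma \ref{lemma c} for the $(q-1)$-split and try to promote them to witnesses $x_1^c \cdots x_q^c \in I:(x_1,\dots,x_q)$ and $x_{q+1}^c \cdots x_p^c \in I:(x_{q+1},\dots,x_p)$ for the $q$-split, but a generator certifying $x_q g \in I$ may have length as large as $p - q + 1$, which is too long to be repositioned into the $p-q$ slots indexed by $q+1,\dots,p$; passing through Proposition \ref{equivalence} sidesteps this obstruction entirely.
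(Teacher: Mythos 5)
Your proposal is correct and follows essentially the same route as the paper: both reduce the statement to the partition conditions of Proposition \ref{equivalence} and exploit the fact that $q-1 \le p-q$ (from $q \le \lfloor p/2\rfloor$) to compare the minimal first terms over partitions of length at most $p-q$ and at most $p-q+1$. The only difference is organizational — you argue the implication directly with a case split on whether these minima coincide, while the paper proves the contrapositive and observes that its hypotheses force the minima to agree — so the mathematical content is the same.
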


\begin{proof}
We prove the contrapositive using Proposition \ref{equivalence}. Suppose there is a $\lambda \in \Lambda(I)$ with $\ell(\lambda) \leq q-1$ and $\lambda_1$ minimal among partitions in $\Lambda(I)$ of length at most $p-q+1$ such that $\lambda_1 > \lambda_2$. Then certainly $\lambda_1$ is minimal among partitions of length at most $p-q$. Suppose that for all $\mu \in \Lambda(I)$ with $p-q+1 < \ell(\mu) \leq p$, we have
\[
\mu_1 \geq \min\{\lambda_1: \lambda \in \Lambda(I) \text{ and } \ell(\lambda) \leq p-q+1\} \geq  \min\{\lambda_1: \lambda \in \Lambda(I) \text{ and } \ell(\lambda) \leq p-q+1\}
\]
In fact, the second inequality above is an equality; the $\lambda$ found above achieves the minimum in both sets. So, it remains to check that partitions $\mu'$ of length $p-q+1$ have $\mu_1' \geq \{\lambda_1: \lambda \in \Lambda(I) \text{ and } \ell(\lambda) \leq p-q+1\}$. But this is true because $\mu'$ is in the set. 

\end{proof}

\begin{proof}[Proof of Theorems \ref{symm conditions} and \ref{partition conditions}]
By Proposition \ref{reduce boundary ideal}, we reduce the monomial product vanishing conditions to checking
\[
[I:(x_1,\dots,x_q)][I:(x_{q+1},\dots,x_p)] \subset  I +(x_{p+1},\dots,x_n)[I:(x_1,\dots,x_p)]
\]
for all $2 \leq p \leq n$ and $1 \leq q \leq \lfloor p/2 \rfloor$. The previous proposition reduces this to checking only $q = \lfloor p/2 \rfloor$. This proves Theorem \ref{symm conditions}.
Applying Proposition \ref{equivalence} to Theorem \ref{symm conditions} yields Theorem \ref{partition conditions}.
\end{proof}

\subsection{Golod Symmetric Monomial Ideals}

Theorem \ref{partition conditions} says that the symmetric monomial ideals satisfying $V_p$ for all $2 \leq p \leq n$ have vanishing products of monomial cycles in Koszul homology. The Golod condition is equivalent to the vanishing of \textit{all} products in Koszul homology and all higher-order Massey products, which we will give a brief overview of shortly. As of yet, we haven't found any examples of an ideal satisfying all $V_p$ which is not Golod.
\par
This is especially interesting considering that symmetric monomial ideals in general do not admit monomial cycle bases in Koszul homology.
\begin{example}
When $\Lambda(I) = \{(2,1,1,0)\}$, the element $[\overline{x_1x_2x_3} e_{124} - \overline{x_1 x_2 x_4} e_{123}]$ $=$ $[-\overline{x_1^2 x_2} e_{234} + \overline{x_1 x_2^2} e_{134}]$ is a non-zero element in $H_3(K^R)$ which cannot be represented by a monomial cycle.

\end{example}

\par

With the rest of this section, we will introduce some multidegree tools to prove Theorem \ref{symm principal}. A {\bf principal} symmetric monomial ideal is a symmetric monomial ideal $I = I_\lambda$ where $\Lambda(I) = \{\lambda\}$. That is,
\[
I_\lambda = (x_1^{\lambda_{\pi(1)}} \dots x_n^{\lambda_{\pi(n)}} : \pi \in \mf S_n )
\] 
Theorem \ref{partition conditions} gives a simple criterion for when $I_\lambda$ has vanishing monomial products.
\begin{proposition}
A symmetric principal monomial ideal $I_\lambda$ has vanishing products of monomial cycles in Koszul homology if and only if $\ell(\lambda) > \lfloor n/2 \rfloor$, or $\ell(\lambda) \leq \lfloor n/2 \rfloor$ and $\lambda_1 = \lambda_2$. 

\end{proposition}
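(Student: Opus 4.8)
The plan is to derive this Proposition directly from Theorem~\ref{partition conditions} (equivalently, Theorem~\ref{symm conditions}) by checking, for each $2 \le p \le n$, whether the principal ideal $I_\lambda$ satisfies condition $V_p$. Since $\Lambda(I_\lambda) = \{\lambda\}$ is a single partition, all the quantifiers in $V_p^{(1)}$ and $V_p^{(2)}$ collapse drastically: the ``minimal first term among partitions of length at most $\lceil p/2\rceil$'' is just $\lambda_1$ when $\ell(\lambda) \le \lceil p/2 \rceil$, and there is no such partition otherwise; and the ``$\mu$ with $\lceil p/2 \rceil < \ell(\mu) \le p$'' must be $\lambda$ itself. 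So the whole statement reduces to a small case analysis on how $\ell(\lambda)$ sits relative to the thresholds $\lceil p/2 \rceil$ and $p$, together with whether $\lambda_1 = \lambda_2$.

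First I would split on the size of $\ell(\lambda)$. \textbf{Case 1: $\ell(\lambda) > \lfloor n/2 \rfloor$.} I claim $V_p$ holds for every $p$. For $p$ with $\lceil p/2 \rceil \ge \ell(\lambda)$, i.e.\ $p$ large, $\lambda$ has length at most $\lceil p/2 \rceil$; then for $V_p^{(1)}$ we need $\lambda_1 = \lambda_2$ whenever $\ell(\lambda) \le \lfloor p/2 \rfloor$ — but one checks that in the regime $\ell(\lambda) > \lfloor n/2 \rfloor$ the inequality $\ell(\lambda) \le \lfloor p/2 \rfloor$ forces $p = n$ with $n$ even and $\ell(\lambda) = n/2$, a boundary situation I will need to handle (here $\lambda_1 = \lambda_2$ is \emph{not} automatic, so I must instead verify $V_p^{(2)}$ applies, or more carefully re-examine which of $V_p^{(1)}$, $V_p^{(2)}$ saves us — this is the one delicate spot). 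For $p$ with $\lceil p/2 \rceil < \ell(\lambda) \le p$, condition $V_p^{(2)}$ is satisfied vacuously-with-witness: $\mu = \lambda$ has $\lceil p/2\rceil < \ell(\mu) \le p$, and the ``minimal first term among partitions of length at most $\lceil p/2 \rceil$'' is taken over the empty set, hence $+\infty$, so $\mu_1 < \infty$ trivially. For $p$ with $\ell(\lambda) > p$, both conditions are vacuous: $V_p^{(1)}$ quantifies over an empty set of $\lambda$'s (length $\le \lfloor p/2\rfloor < \ell(\lambda)$... wait, need $\le \lceil p/2\rceil$ for the ``minimal'' to be over empty set) and $V_p^{(2)}$ likewise has no eligible $\mu$ — so I should double-check the convention, but the upshot is that when $\Lambda(I)$ has only one partition and it is too long, $V_p$ holds vacuously. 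So Case 1 gives $V_p$ for all $p$, hence vanishing monomial products.

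\textbf{Case 2: $\ell(\lambda) \le \lfloor n/2 \rfloor$.} Here I split on $\lambda_1 = \lambda_2$ versus $\lambda_1 > \lambda_2$. If $\lambda_1 = \lambda_2$: for every $p$, either $\ell(\lambda) \le \lceil p/2 \rceil$ and then $V_p^{(1)}$ holds since the unique candidate $\lambda$ has $\lambda_1 = \lambda_2$; or $\ell(\lambda) > \lceil p/2\rceil$, and since $\ell(\lambda) \le \lfloor n/2 \rfloor \le p$ when $p \ge \lfloor n/2\rfloor$... I need to confirm $\ell(\lambda) \le p$ in this subregime so that $V_p^{(2)}$ fires with $\mu = \lambda$. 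When $\ell(\lambda) \le \lceil p/2 \rceil$ fails, $p$ is small, but still $\ell(\lambda) \le \lfloor n/2\rfloor$; I need $\ell(\lambda) \le p$, and if $\lceil p/2\rceil < \ell(\lambda) \le \lfloor n/2\rfloor$ then $p < 2\ell(\lambda)$, but is $p \ge \ell(\lambda)$? Not necessarily — if $p < \ell(\lambda)$ then $V_p$ is vacuous as in Case 1, which is fine. So $\lambda_1 = \lambda_2$ gives $V_p$ for all $p$. Conversely, if $\lambda_1 > \lambda_2$ and $\ell(\lambda) \le \lfloor n/2 \rfloor$: take $p = n$. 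Then $\lceil n/2 \rceil \ge \lfloor n/2\rfloor \ge \ell(\lambda)$, so $\lambda$ has length $\le \lceil n/2 \rceil$, and in fact length $\le \lfloor n/2 \rfloor = \lfloor p/2 \rfloor$, so $V_p^{(1)}$ \emph{requires} $\lambda_1 = \lambda_2$, which fails; and $V_p^{(2)}$ needs some $\mu$ with $\lceil n/2 \rceil < \ell(\mu) \le n$, but the only partition is $\lambda$ with $\ell(\lambda) \le \lfloor n/2\rfloor \le \lceil n/2 \rceil$, so no such $\mu$ exists. Hence $V_n$ fails, so monomial products do not all vanish.

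The main obstacle I anticipate is bookkeeping around the floor/ceiling thresholds and the parity of $n$, especially the boundary case $\ell(\lambda) = n/2$ for $n$ even, where $\ell(\lambda) \le \lfloor n/2 \rfloor$ and $\ell(\lambda) > \lceil n/2 \rceil$ is false — so this partition lands in Case~2, and the conclusion should be: it has vanishing products iff $\lambda_1 = \lambda_2$, consistent with the stated criterion (since $\ell(\lambda) = n/2 = \lfloor n/2 \rfloor \not> \lfloor n/2\rfloor$). I will also need to fix, once and for all, the convention that a minimum over the empty set is $+\infty$ and that a universally-quantified statement over an empty index set is vacuously true, and then the ``interesting'' content of the argument is entirely the $p = n$ case; all smaller $p$ either reduce to vacuity (when $p < \ell(\lambda)$) or to the same dichotomy with weaker thresholds. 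A clean way to organize the write-up is therefore: (i) observe $V_p$ is vacuous whenever $p < \ell(\lambda)$; (ii) observe $V_p^{(2)}$ holds whenever $\lceil p/2\rceil < \ell(\lambda) \le p$; (iii) observe $V_p^{(1)}$ for $\ell(\lambda) \le \lfloor p/2\rfloor$ is exactly ``$\lambda_1 = \lambda_2$''; (iv) assemble: all $V_p$ hold $\iff$ $V_n$ holds $\iff$ ($\ell(\lambda) > \lfloor n/2\rfloor$, making $V_n$ vacuous or handled by (ii)) or ($\ell(\lambda)\le\lfloor n/2\rfloor$ and $\lambda_1=\lambda_2$).
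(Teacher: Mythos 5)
Your proposal is correct and takes essentially the same route as the paper: apply Theorem \ref{partition conditions} to $\Lambda(I_\lambda)=\{\lambda\}$, note that the sufficiency cases follow from $V_p^{(1)}$, and that failure is detected at $p=n$, where $V_n^{(1)}$ forces $\lambda_1=\lambda_2$ and $V_n^{(2)}$ has no possible witness when $\ell(\lambda)\le\lfloor n/2\rfloor$. The ``delicate spot'' you flag in Case 1 is vacuous: if $\ell(\lambda)>\lfloor n/2\rfloor$ then $\lfloor p/2\rfloor\le\lfloor n/2\rfloor<\ell(\lambda)$ for every $p\le n$, so $V_p^{(1)}$ holds vacuously for all $p$ (this is exactly the paper's one-line argument, and it also covers the regime $\lfloor p/2\rfloor<\ell(\lambda)\le\lceil p/2\rceil$ missing from your steps (i)--(iii)), so no appeal to $V_p^{(2)}$ or to a minimum-over-the-empty-set convention is needed.
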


\begin{proof}
If $\ell(\lambda) > \lfloor n/2 \rfloor$ then $\Lambda(I)$ satisfies $V_p^{(1)}$ vacuously for all $p$. If $\ell(\lambda) \leq \lfloor n/2 \rfloor$ and $\lambda_1 = \lambda_2$, then $\Lambda(I)$ satisfies $V_p^{(1)}$ vacuously for $\lfloor p/2 \rfloor < \ell(\lambda)$ and straightforwardly for larger $p$. 
\par
If $\ell(\lambda) \leq \lfloor n/2 \rfloor$ and $\lambda_1 > \lambda_2$, then $\Lambda(I)$ fails $V_n^{(1)}$. $V_n^{(2)}$ fails as well; it can only hold when $|\Lambda(I)| > 1$.
\end{proof}

Here we restate Theorem \ref{symm principal}.

\begin{theorem*}
A symmetric principal monomial ideal $I_\lambda$ is Golod if and only if $\ell(\lambda) > \lfloor n/2 \rfloor$ or $\ell(\lambda) \leq \lfloor n/2 \rfloor$ and $\lambda_1 = \lambda_2$.

\end{theorem*}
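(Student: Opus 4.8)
The forward implication needs no new input: a Golod ideal is in particular weakly Golod, so all products of monomial cycles in $H_{\ge 1}(K^R)$ vanish, and the preceding proposition translates this into the stated condition on $\lambda$ (alternatively, invoke Corollary~\ref{weak golod cond}). So the whole content of the theorem is the converse, and I would split it by the number of variables. When $n\le 4$ there is nothing left to prove: by Theorem~\ref{four var} (together with the classical three-variable classification \cite{dao-destefani-2020}) vanishing of products of monomial cycles already forces Golodness, and the preceding proposition says that vanishing holds under our hypothesis on $\lambda$. So from here on I would assume $n\ge 5$, where weak Golodness is genuinely weaker than Golodness, and the goal becomes to produce a \emph{trivial Massey operation} on $\tilde H=H_{\ge 1}(K^R)$ --- this is what the promised ``multidegree tools'' should deliver.

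The plan for $n\ge 5$ is to run the Massey operation while tracking $\mathbb Z^n$-multidegrees. The first step is a multigraded classification lemma: for a principal symmetric ideal $I_\lambda$, determine for each $p\ge 1$ which multidegrees $\mathbf a$ have $H_p(K^{S/I_\lambda})_{\mathbf a}\ne 0$, and pin down the ``shape'' of such $\mathbf a$. I expect that, reusing the mechanism of Lemma~\ref{lemma c} and Proposition~\ref{reduce boundary ideal}, one can show that any nonzero class of positive homological degree whose multidegree has a coordinate equal to $0$ is already represented by a monomial cycle, while the non-monomial classes occur only in ``interior'' multidegrees whose shape is rigidly controlled by $\lambda$ (essentially a constant staircase $(c,\dots,c)$ perturbed in a bounded number of coordinates, with $c$ as in Lemma~\ref{lemma c}). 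Principality is exactly what makes this tractable: a single generating partition pins down the Koszul cycles and boundaries of $I_\lambda$ tightly enough that this classification becomes a finite check.

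With the classification in hand, the second step is the bookkeeping. A $k$-fold product, or more generally a value of a defining system for a higher Massey product, built from classes of positive homological degree lives in the multidegree which is the \emph{sum} of the multidegrees of its inputs; since each input is supported on at least $\ell(\lambda)$ variables, such a sum acquires large support, and combining this with the staircase constraint from the classification lemma forces the output either into a zero multigraded piece or to be a genuine monomial cycle --- and products of monomial cycles vanish by the preceding proposition (equivalently by Theorem~\ref{main thm} and Proposition~\ref{mon bound ideal}, which exhibit the relevant primitives explicitly). Choosing the primitives of these already-trivial products \emph{multihomogeneously} keeps every subsequent defining system inside the same controlled family of multidegrees, so the recursion never escapes the region where products are forced to vanish; hence the Massey operation can be taken trivial and $I_\lambda$ is Golod. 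The two allowed cases enter here: when $\ell(\lambda)>\lfloor n/2\rfloor$ every generator, hence every positive-degree class, is supported on more than half the variables, which is what makes the support of a product overflow; when $\ell(\lambda)\le\lfloor n/2\rfloor$ and $\lambda_1=\lambda_2$ the equality of the first two parts is precisely the condition keeping the ``interior'' classes out of the multidegrees in which a nonzero Massey product could survive.

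The main obstacle is exactly the last two steps: establishing the multigraded classification of $H_*(K^{S/I_\lambda})$ sharply enough, and then verifying that no cascade of defining systems ever leaves the multidegrees in which products vanish --- i.e.\ that weak Golodness genuinely propagates to a trivial Massey operation in this case. This is where I expect the real work to lie, and it is also the phenomenon that fails for general symmetric monomial ideals: it is the interaction between several generating partitions that produces the weakly-Golod-but-not-Golod examples and that leaves the analogous statement open beyond the principal case.
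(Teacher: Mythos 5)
Your necessity direction and your treatment of the case $\ell(\lambda)>\lfloor n/2\rfloor$ match the paper: there the multidegree support of every class in $H_{\geq 1}(K^R)$ has size at least $\ell(\lambda)>n/2$, and the paper's Proposition \ref{orthogonal} (nonzero Massey operations force pairwise disjoint supports, proved via polarization) plus pigeonhole kills all Massey operations. The reduction to $n\geq 5$ via Theorem \ref{four var} is harmless but not needed.

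The genuine gap is the core case $\ell(\lambda)\leq\lfloor n/2\rfloor$, $\lambda_1=\lambda_2$, where your sketch does not contain a working mechanism. Your ``support overflow'' idea cannot function here: since $\ell(\lambda)\leq n/2$, classes with disjoint supports do exist, so the sum of multidegrees need not land in a zero graded piece, and vanishing of products of monomial cycles (Theorem \ref{partition conditions}) says nothing about non-monomial classes or higher Massey operations. Your proposed classification lemma is both unproven and not the right statement: the actual input the paper uses is the known description of the multigraded Betti numbers of $I_\lambda$ (Theorem \ref{multidegs}, from \cite{KumarAshok2013MBno}), whose multidegrees are permutations of $(d_{i_1}^{q_1},\dots,d_{i_j}^{q_j},0^{q_{j+1}})$ subject to explicit inequalities, not a perturbed constant staircase, and nothing like ``zero-coordinate classes are monomial'' is used or needed. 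The decisive step you are missing is a homological-degree count, not a support count: writing $\lambda=(d_1^{p_1},\dots,d_t^{p_t},0^{p_{t+1}})$ with $p_1\geq 2$, one computes from Theorem \ref{multidegs} the homological degree of each $h_s$ and of any element of $\mu_r(h_1,\dots,h_r)$ purely from their multidegrees (which are additive and, by Proposition \ref{orthogonal}, have disjoint supports), and compares with the Massey formula $\sum m_s+r-2$; the comparison reduces to $(2-p_1)(r-1)-1$ being equal to a manifestly non-negative quantity, which is impossible when $p_1\geq 2$. This shows directly that every defined Massey operation contains only zero, so the ``cascade of defining systems'' bookkeeping you flag as the main obstacle never arises. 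As written, your proposal defers exactly this content (``this is where I expect the real work to lie''), so it does not yet constitute a proof of the sufficiency direction.
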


In order to prove this theorem, we exploit multidegrees to show that all Massey operations $\mu_r(h_1,\dots,h_r)$ vanish. For $r \geq 2$, let $h_1,\dots,h_r$ be homology classes in $H_{\geq 1}(K^R)$ with multidegrees $\mathbf a_1, \dots \mathbf a_r$ and homological degrees $m_1,\dots,m_r$. $\mu_2(h_1,h_2)$ is just the usual multiplication in Koszul homology. For $r \geq 3$, $\mu_r(h_1,\dots,h_r)$ is a set which is only defined for certain collections of homology classes, but when it is defined, each element of  $\mu_r(h_1,\dots,h_r)$ is a homology class with multidegree $\sum \mathbf a_i$ and homological degree $\sum m_i + r - 2$. These are simply properties of $\mu_r$ that we will put to use;  \cite{GulliksenTorH1969Holr} provides one of several equivalent definitions of $\mu_r(h_1,\dots,h_r)$ and a proof that $R$ is Golod if and only if the Massey operations vanish. 
\par
For monomial ideals, there is a simple criterion in terms of multidegree that guarantees vanishing of $\mu_r(h_1,\dots,h_r)$.
\begin{proposition}
\label{orthogonal}
Let $h_1,\dots,h_r \in H_{\geq 1}(K^R)$ of multidegrees $\mathbf a_1, \dots, \mathbf a_r$. If $\mu_r(h_1,\dots,h_r)$ is defined and contains not only zero, then the non-zero components of all the $\mathbf a_i$ are distinct. 

\end{proposition}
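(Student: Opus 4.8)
The statement to prove is Proposition~\ref{orthogonal}: if $\mu_r(h_1,\dots,h_r)$ is defined and contains a nonzero element, then the supports (sets of nonzero coordinates) of the multidegrees $\mathbf{a}_1,\dots,\mathbf{a}_r$ are pairwise disjoint. The strategy is to contrapose: assume two of the supports overlap, say $\supp(\mathbf{a}_i)\cap\supp(\mathbf{a}_j)\ni t$ for some $i\neq j$, and deduce that every element of $\mu_r(h_1,\dots,h_r)$ is zero. The key structural fact is that $\mu_r$ respects the $\mathbf{Z}^n$-multigrading: each element of $\mu_r(h_1,\dots,h_r)$ is a homology class of multidegree $\mathbf{a}:=\sum_k \mathbf{a}_k$ and homological degree $\sum_k m_k + r - 2$. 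So it suffices to show that $H_{*}(K^R)_{\mathbf{a}} = 0$ in the relevant homological degree, or more precisely that the specific class lying in that graded piece must vanish.

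**Key steps.** First I would recall that a multihomogeneous element of $K^R_\ell$ of multidegree $\mathbf{b}$ is a $k$-linear combination of terms $\bar f e_\sigma$ where $\sigma$ has size $\ell$, $f$ is the monomial of multidegree $\mathbf{b} - \sum_{s\in\sigma}\mathbf{e}_s$, and crucially $\mathbf{b} - \sum_{s\in\sigma}\mathbf{e}_s \geq 0$ coordinatewise; in particular $\mathbf{b}$ must be nonzero in every coordinate of $\sigma$. Second, since each $h_k$ has homological degree $m_k \geq 1$, any cycle representing it in multidegree $\mathbf{a}_k$ is supported on basis vectors $e_\sigma$ with $|\sigma| = m_k \geq 1$, forcing $\mathbf{a}_k$ to have at least one nonzero coordinate; moreover every coordinate in $\supp(\mathbf{a}_k)$ that is "used" by $\sigma$ must be $\geq 1$. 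The heart of the matter: if coordinate $t$ lies in both $\supp(\mathbf{a}_i)$ and $\supp(\mathbf{a}_j)$, then $\mathbf{a}_t = (\mathbf{a}_1 + \dots + \mathbf{a}_r)_t \geq (\mathbf{a}_i)_t + (\mathbf{a}_j)_t \geq 1 + 1 = 2$ only if each of $\mathbf{a}_i, \mathbf{a}_j$ contributes $\geq 1$ there — but actually the sharper observation is about which $e_\sigma$ can appear. I would argue that any element $g$ of $\mu_r(h_1,\dots,h_r)$, being a cycle in $K^R_{\sum m_k + r - 2}$ of multidegree $\mathbf{a}$, when written as $\sum c_\sigma \bar f_\sigma e_\sigma$, needs $x^{\mathbf{a}}/x_\sigma$ to be a genuine monomial, i.e. $\sigma \subseteq \supp(\mathbf{a})$ and $(\mathbf{a})_s \geq 1$ for $s\in\sigma$. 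Then I would show $g$ is forced to be a boundary using the structure of $\mu_r$: in fact the paper's intended argument is surely that $g$ lies in the image of $\partial$ because of a multidegree collapse — concretely, since some coordinate $t$ of $\mathbf{a}$ comes from two different $\mathbf{a}_i$'s, the relevant preimage in $K^R_{\ell+1}$ exists; I would make this precise by the same argument used in Proposition~\ref{reduce boundary ideal} and Proposition~\ref{h1h3 vanish}, where multidegree position-counts force cycles to be boundaries.

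**The main obstacle.** The hard part is pinning down exactly why overlapping supports force the specific Massey-product class (not just the graded piece of homology) to vanish, since $\mu_r(h_1,\dots,h_r)$ for $r\geq 3$ is a coset, not a single class, and its definition involves the auxiliary chains $\gamma_{i,j}$ with $\partial \gamma_{i,j} = \sum \gamma_{i,k}\gamma_{k+1,j}$. I expect the cleanest route is: (1) show by induction on the "span" of the defining system that every chain $\gamma_{i,j}$ appearing can be chosen multihomogeneous of multidegree $\mathbf{a}_i + \dots + \mathbf{a}_j$; (2) observe that whenever two blocks with overlapping supports get combined, the product term $\gamma_{i,k}\gamma_{k+1,j} = \bar f\, e_\sigma \cdot \bar g\, e_\tau$ has $e_\sigma \wedge e_\tau$ with $\sigma$ and $\tau$ subsets of the respective supports — and since $\mathbf{a}$ has a coordinate $t$ with $(\mathbf{a})_t \geq 2$ while the relevant $e_\sigma$ in the final degree can only "absorb" one unit of $x_t$, there is always room to write the final cycle as $\partial$ of something; more carefully, a multihomogeneous cycle in $K^R_\ell$ of multidegree $\mathbf{a}$ whose support-of-multidegree strictly exceeds $\ell$ in the sense that $\#\supp(\mathbf{a}) \leq \ell$ is forced... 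Actually the crispest statement: a nonzero multihomogeneous cycle in positive homological degree $\ell$ can only exist in multidegree $\mathbf{a}$ if $\#\supp(\mathbf{a}) \geq \ell$, and when $\#\supp(\mathbf{a})$ is small relative to $\ell$ combined with the bound $\ell = \sum m_k + r - 2 \geq \sum(\#\supp(\mathbf{a}_k)) + r - 2 > \#\supp(\mathbf{a})$ forced by the overlap, we get a contradiction with cyclehood unless the class is a boundary. I would isolate this counting inequality as the crux lemma and derive Proposition~\ref{orthogonal} from it together with the multigrading-compatibility of Massey operations.
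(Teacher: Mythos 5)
Your overall plan (make the defining system multihomogeneous so that every element of $\mu_r(h_1,\dots,h_r)$ is multihomogeneous of multidegree $\mathbf a=\sum_i\mathbf a_i$, then kill that class by a multidegree argument) starts out fine, but the crux lemma you propose is wrong, and in a way that cannot be repaired within your framework. Your counting inequality runs in the wrong direction: a nonzero class of homological degree $m_k$ and multidegree $\mathbf a_k$ only gives $m_k\le \#\supp(\mathbf a_k)$ (each $e_\sigma$ occurring must have $\sigma\subset\supp(\mathbf a_k)$), not $m_k\ge\#\supp(\mathbf a_k)$, so the chain $\sum m_k+r-2\ge\sum\#\supp(\mathbf a_k)+r-2>\#\supp(\mathbf a)$ does not hold. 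Concretely, for $I=(x_1x_3,x_2x_3)$ take $h_1=[\bar x_3e_1]$, $h_2=[\bar x_3e_2]$ with multidegrees $(1,0,1,0)$ and $(0,1,1,0)$ overlapping in the third coordinate: here $\ell=m_1+m_2=2$ while $\#\supp(\mathbf a)=3$, so your inequality fails and the support count detects nothing. Worse, the stronger statement you are implicitly aiming for --- that the whole graded piece $H_\ell(K^R)_{\mathbf a}$ vanishes when two supports overlap --- is simply false: non-squarefree multidegrees do carry Koszul homology for non-squarefree monomial ideals (already $I=(x_1^2)$ has $H_1$ in degree $(2,0,\dots,0)$, and $I=(x_1x_3,x_2x_3^2)$ has $H_2(K^R)_{(1,1,2,0)}\neq 0$ coming from the syzygy of the two generators). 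So no argument that only compares $\#\supp(\mathbf a)$ with the homological degree can prove the proposition; the vanishing really concerns the particular Massey product classes, not the multigraded component they live in.

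The missing idea is the paper's reduction to the polarization. Since $S/I$ is obtained from $S^{\mathrm{pol}}/I^{\mathrm{pol}}$ by a regular sequence of linear forms, the Koszul homology algebra and its Massey operations are unchanged, so one may assume $I$ is squarefree; for a squarefree monomial ideal every nonzero multigraded component of $H_*(K^R)$ sits in a squarefree multidegree. If two of the $\mathbf a_i$ share a nonzero coordinate, then $\mathbf a=\sum_i\mathbf a_i$ is not squarefree, hence the (multihomogeneous) elements of $\mu_r(h_1,\dots,h_r)$, which all have multidegree $\mathbf a$, are zero. This squarefreeness-after-polarization step is exactly the structural input your exponent-blind support count cannot see, and it is what lets a one-line multidegree argument finish the proof.
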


\begin{proof}
It suffices to check that this holds for the polarization of $I$. In this case, all the non-zero homology classes have square-free multidegrees. So, if $\mathbf a_i$ and $\mathbf a_j$ share a non-zero component in common for $i \neq j$, the multidegree of $\mu_r(h_1,\dots,h_r)$ is not squarefree. Hence, $\mu_r(h_1,\dots,h_r) = 0$. 

\end{proof}
{\bf Remark.} We can use Proposition \ref{orthogonal} to give an alternate proof of Proposition \ref{h1h3 vanish}: a non-monomial cycle in $H_3(K^R)$ would have a multidegree which is non-zero in each component, so all products with it must vanish.
\par
The multigraded Betti numbers of $I_\lambda$ were described in \cite{KumarAshok2013MBno}. For ease of notation, we use an alternate notation for partitions. Let $\lambda = (d_1^{p_1},\dots,d_s^{p_t},0^{p_{t+1}})$, where $d_1 > \dots > d_s > 0$, $p_1,\dots,p_t > 0$, and $d_i^{p_i}$ is shorthand for $p_i$ copies of $d_i$.

\begin{theorem}[Theorem 3.5 of \cite{KumarAshok2013MBno}]
\label{multidegs}
Let $\lambda = (d_1^{p_1},\dots,d_t^{p_t},0^{p_{t+1}})$ be a partition and let $I_\lambda$ be the symmetric principal monomial ideal with $\Lambda(I) = \{\lambda\}$. Then $\dim_k H_p(K^R)_{\mathbf a} \neq 0$ if and only if $\mathbf a$ is a permutation of a multidegree of the form
\[
(d_{i_1}^{q_1}, \dots, d_{i_j}^{q_j},0^{q_{j+1}})
\]
where $1 = i_1 < i_2 < \dots i_j \leq t$ and for all $1 \leq k \leq j$,
\[
q_1 + \dots + q_k \geq p_1 + p_2 + \dots + p_{i_k} 
\]
and 
\[
\sum_{k = 1}^j (q_1 + \dots + q_k - (p_1 + p_2 + \dots + p_{i_k})) = p-1
\]

\end{theorem}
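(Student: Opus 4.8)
The statement is \cite[Theorem~3.5]{KumarAshok2013MBno}; here is the strategy one would use to prove it. Write $\mathbf x^{\mathbf b}$ for the monomial with exponent vector $\mathbf b \in \Z_{\geq 0}^n$. The first step is the standard translation of multigraded Koszul homology into reduced simplicial homology. The formula for $\partial$ recorded in Section~\ref{section prelim} shows that the multidegree-$\mathbf a$ strand of $K^R$ is exactly the relative chain complex of the pair (full simplex on $\supp(\mathbf a)$, $\Delta_{\mathbf a}$), with $\sigma$ placed in homological degree $|\sigma|$, where
\[
\Delta_{\mathbf a} := \{\sigma \subseteq \supp(\mathbf a) : \mathbf x^{\mathbf a}/x_\sigma \in I_\lambda\} .
\]
Since the full simplex is acyclic, this gives $\dim_k H_p(K^R)_{\mathbf a} = \dim_k \widetilde H_{p-2}(\Delta_{\mathbf a}; k)$ for $p \geq 1$ (with $\Delta_{\mathbf a}$ void when $\mathbf x^{\mathbf a} \notin I_\lambda$, so that all such $H_p$ vanish). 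Because $I_\lambda$ is $\mf S_n$-stable, a permutation $g$ carries $\Delta_{\mathbf a}$ isomorphically to $\Delta_{g\cdot\mathbf a}$, so we may assume $\mathbf a$ is a partition.

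The heart of the argument is the observation that decrementing a single coordinate of value $v$ changes the quantity ``number of coordinates that are $\geq w$'' only at the level $w = v$, where it drops by one (this is so even when $v-1$ coincides with a lower value of $\mathbf a$ or with a part of $\lambda$). Group $\supp(\mathbf a)$ into its value-blocks $B_1, \dots, B_r$, the coordinates where $\mathbf a$ equals its distinct positive values $v_1 > \dots > v_r$, and set $b_j := |B_j|$ and $c_j := \#\{i : a_i \geq v_j\} - \#\{i : \lambda_i \geq v_j\}$. Since membership $\mathbf x^{\mathbf a}/x_\sigma \in I_\lambda$ is equivalent to the sorted exponent vector dominating $\lambda$ level by level, the observation shows that $\mathbf x^{\mathbf a}/x_\sigma \in I_\lambda$ if and only if $\mathbf x^{\mathbf a} \in I_\lambda$ and $|\sigma \cap B_j| \leq c_j$ for every $j$. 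Hence, when $\mathbf x^{\mathbf a}\in I_\lambda$, the complex $\Delta_{\mathbf a}$ is precisely the join over $j$ of the $(c_j-1)$-skeleton of the simplex on $B_j$. A join computation then gives that $\Delta_{\mathbf a}$ is contractible if some block is ``slack'' ($c_j \geq b_j$), and otherwise is homotopy equivalent to a wedge of $\prod_j \binom{b_j-1}{c_j}$ spheres of dimension $\big(\sum_j c_j\big) - 1$. Therefore $H_p(K^R)_{\mathbf a} \neq 0$ if and only if $\mathbf x^{\mathbf a}\in I_\lambda$, every block is tight ($0 \leq c_j \leq b_j-1$), and $p - 1 = \sum_j c_j$.

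It remains to recognize these conditions as the ones in the statement. Using $\mathbf x^{\mathbf a}\in I_\lambda$, tightness of block $j$ forces its value $v_j$ to be a part of $\lambda$, and for $j=1$ forces $v_1 = d_1$; writing $v_j = d_{i_j}$ with $1 = i_1 < \dots < i_r$ and $b_j = q_j$ recovers the indexing of the theorem and gives $c_j = (q_1+\dots+q_j) - (p_1+\dots+p_{i_j})$. Then $c_j \geq 0$ is exactly $q_1+\dots+q_k \geq p_1+\dots+p_{i_k}$, and $p-1 = \sum_j c_j$ is the displayed sum. The main points requiring care are the level-by-level verification of $\mathbf x^{\mathbf a}\in I_\lambda$ (the binding constraints sit at the parts of $\lambda$ immediately above the blocks, so ``skipped'' parts of $\lambda$ must be tracked), the complementary upper bounds $c_j \leq b_j-1$ encoding tightness, and the low-degree reduced-homology conventions; this bookkeeping is the technical content of \cite[Theorem~3.5]{KumarAshok2013MBno}, which one follows.
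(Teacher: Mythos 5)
The paper does not actually prove this statement; it is imported as Theorem 3.5 of \cite{KumarAshok2013MBno}, so there is no in-paper argument to compare yours against. Your strategy is the standard and essentially correct one for results of this type: the multidegree-$\mathbf a$ strand of $K^R$ computes $\widetilde H_{p-2}(\Delta_{\mathbf a};k)$ for the upper Koszul complex $\Delta_{\mathbf a}=\{\sigma\subseteq\supp(\mathbf a): \mathbf x^{\mathbf a}/x_\sigma\in I_\lambda\}$; after sorting $\mathbf a$, your level-set bookkeeping correctly identifies $\Delta_{\mathbf a}$ (when nonvoid) with the join over the value-blocks $B_j$ of the $(c_j-1)$-skeleta of simplices, and the join/skeleton homotopy computation is right. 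The resulting criterion is: $H_p(K^R)_{\mathbf a}\neq 0$ iff $\mathbf x^{\mathbf a}\in I_\lambda$, $0\le c_j\le b_j-1$ for every block, and $\sum_j c_j=p-1$.

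The gap is in the last step, where you assert that these conditions ``are the ones in the statement.'' They are not. Your criterion contains, besides $c_k\ge 0$ and $\sum_k c_k=p-1$, the upper bounds $c_k\le q_k-1$ (equivalently $q_1+\dots+q_{k-1}\le p_1+\dots+p_{i_k}-1$) and the dominance inequalities at the parts of $\lambda$ skipped by $\mathbf a$ (needed for $\mathbf x^{\mathbf a}\in I_\lambda$). Neither family is implied by the displayed conditions, and the statement as transcribed is in fact false without them: take $\lambda=(2,1,0)$ in $k[x_1,x_2,x_3]$ and $\mathbf a=(2,2,1)$, so $q_1=2\ge p_1=1$, $q_1+q_2=3\ge p_1+p_2=2$, and the displayed sum equals $2$, predicting $H_3(K^R)_{(2,2,1)}\neq 0$; but $x_1x_2x_3\notin I_\lambda$ (every generator contains a square), so $\overline{x_1x_2}\,e_{123}$ is not even a cycle and $H_3(K^R)_{(2,2,1)}=0$. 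In your notation $c_2=1=b_2$, i.e.\ the second block is slack and $\Delta_{\mathbf a}$ is contractible. So either the transcription is missing these extra inequalities (the original reference presumably has them) or you would have to prove an equivalence that does not hold; deferring the upper bounds and skipped-level checks to the cited paper as ``bookkeeping one follows'' is exactly the point at which the proposal fails to close. Note that only the necessity direction --- which your argument does establish --- is what the paper actually uses later, in the proof of Theorem \ref{symm principal}.
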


With these tools in hand, we are ready to prove Theorem \ref{symm principal}.

\begin{proof}[Proof of Theorem \ref{symm principal}]
If $\ell(\lambda) \leq \lfloor n/2 \rfloor$ and $\lambda_1 > \lambda_2$ then there is a non-trivial product of monomial cycles by Theorem \ref{partition conditions}, so $I_\lambda$ is not Golod. 
\par
If $\ell(\lambda) > \lfloor n/2 \rfloor$ then by Proposition \ref{orthogonal} and the pigeonhole principal, all Massey operations vanish and $I_\lambda$ is Golod. So suppose $\ell(\lambda) \leq \lfloor n/2 \rfloor$ and $\lambda_1 = \lambda_2$. That is, $\lambda = (d_1^{p_1},\dots,d_t^{p_t},0^{p_{t+1}})$ where $p_1 \geq 2$ and $p_{t+1} > \lfloor n/2 \rfloor$. 
\par
Let $h_1,\dots,h_r$ be homology classes in homological degrees $m_1,\dots,m_r$ such that $\mu_r(h_1,\dots,h_r)$ contains not only zero. By Theorem \ref{multidegs}, the multidegree of an element of $\mu_r(h_1,\dots,h_r)$ is a permutation of
\[
(d_{i_1}^{q_1}, \dots, d_{i_j}^{q_j},0^{q_{j+1}})
\]
and its homological degree is  
\[
\sum_{k = 1}^j (q_1 + \dots + q_k - (p_1 + p_2 + \dots + p_{i_k})) + 1
\]
Since $\mu_r$ preserves multidegrees, each $h_s$ has as its multidegree a permutation of
\[
(d_{i_1}^{q_1^{(s)}}, \dots, d_{i_j}^{q^{(s)}_j},0^{q^{(s)}_{j+1}})
\]
where we allow $q_{i_k}^{(s)}$ to be zero for $k > 1$. Note that by Proposition \ref{orthogonal}, $\sum_s q_k^{(s)} = q_k$. 
\par 
The homological degree of $h_s$ is
\[
m_s = q_1^{(s)} - p_1 + \sum_{\substack{1 < k \leq j \\
							q_k^{(s)} > 0} } ( (q_1^{(s)} + \dots + q_k^{(s)}) - (p_1 + \dots + p_{i_k})) + 1
\]
The homological degree formula for $\mu_r$ therefore tells us that the homological degree of elements of $\mu_r(h_1,\dots,h_r)$ can also be written as $\sum m_s + r - 2$. We must, therefore, have the following equality.
\[
\sum_{s = 1}^r (q_1^{(s)} - p_1 + \sum_{\substack{1 < k \leq j \\
							q_k^{(s)} > 0} }(( q_1^{(s)} + \dots + q_k^{(s)}) - (p_1 + \dots + p_{i_k})) + 1) + r - 2 = \sum_{k = 1}^j (q_1 + \dots + q_k - (p_1 + p_2 + \dots + p_{i_k})) + 1
\]
The left hand side is equal to
\[
q_1 - rp_1 + \sum_{k = 2}^j \sum_{\substack{ 1 \leq s' \leq r \\
				q_k^{(s')} > 0}}(( q_1^{(s)} + \dots + q_k^{(s)}) - \eta_k (p_1 + \dots + p_{i_k})) + 2r - 2
\]
where $\eta_k = \#\{s' : q_k^{(s')} > 0 \} \geq 1$. The equality then reduces to
\[
(2-p_1)(r-1) - 1 =  \sum_{k = 2}^{j} ((q_1 + \dots + q_k) - \sum_{\substack{ 1 \leq s' \leq r \\
				q_k^{(s')} > 0}} ( q_1^{(s)} + \dots + q_k^{(s)}) + (\eta_k - 1)(p_1 + \dots + p_{i_k}))
\]
Since $\sum q_k^{(s)} = q_k$ and $\eta_k \geq 1$, all of the summands on the right hand side are non-negative. By assumption, $p_1 \geq 2$, so the left hand side is negative, a contradiction.
\end{proof}

\section{Ideals Admitting a Monomial Cycle Basis in Koszul Homology}
\label{section mon basis}
In this section we demonstrate that a certain class of monomial ideals with linear quotients admits a monomial basis in Koszul homology. This class includes stable ideals, matroidal ideals, and symmetric shifted ideals. Since ideals with linear quotients have a piecewise linear resolution \cite[Theorem 8.2.15]{HerzogJürgen2011Mi}, they are Golod \cite{HerzogJ.1999Clia}, and thus satisfy the inclusions of Theorem \ref{necessary cond}.

\subsection{Ideals With Linear Quotients}
We begin by recalling facts about ideals with linear quotients following \cite{HerzogTakayama}, where the notion was introduced, and the stated propositions in this subsection were proved.

For a monomial ideal $I$, let $G(I)$ denote a set of minimal monomial generators for $I$. $I$ has {\bf linear quotients} if there is some order $u_1,\dots,u_m$ of the minimal generators such that $(u_1,\dots,u_{j-1}) : u_j$ is generated by a subset of $\{x_1,\dots,x_n\}$ for all $j \geq 2$. We call $\set(u_j)$ the set $\{ i : x_i \in (u_1,\dots,u_{j-1}):u_j \}$.

\par

Suppose $I$ is an ideal with linear quotients. We denote by $M(I)$ the monomials in $I$ and let $g: M(I) \to G(I)$ be a function such that $g(u) = u_j$, where $j$ is the minimal number such that $u_j | u$. $g$ is in fact uniquely determined, and is called the {\bf decomposition function} of $I$ with respect to the given order of the monomials. We say $g$ is {\bf regular} if $\set(g(x_s u)) \subset \set(u)$ for all $u \in G(I)$ and all $s \in \set(u)$.

Minimal free resolutions of ideals with linear quotients can be constructed by `iterated mapping cones'. The free modules in such resolutions have an explicit description: 

\begin{proposition}
\label{cone basis}
Suppose $\deg u_1 \leq \dots \leq \deg u_m$. Then the minimal graded free $S$-resolution $F$ of $S/I$ has homogeneous basis
\[
F_i = \bigoplus_{\substack{u \in G(u) \\
						\sigma \subset \set(u) \\
						|\sigma| = i-1}}
				 S \cdot \gamma^u_\sigma
\]
for $i > 0$, where $\gamma^u_\sigma$ is a symbol with $\deg \gamma^u_\sigma = |\sigma| + \deg u$.
\end{proposition}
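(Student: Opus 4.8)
The plan is to build the minimal graded free resolution $F$ of $S/I$ as an \emph{iterated mapping cone}, inducting on the number of minimal generators, exactly in the spirit of Herzog--Takayama. Fix the order $u_1,\dots,u_m$ of $G(I)$ witnessing the linear quotients, with $\deg u_1 \le \dots \le \deg u_m$, and set $I_j = (u_1,\dots,u_j)$, so $I_m = I$. For each $j \ge 1$ there is a short exact sequence
\[
0 \to \bigl(S/(I_{j-1}:u_j)\bigr)(-\deg u_j) \xrightarrow{\ \cdot u_j\ } S/I_{j-1} \to S/I_j \to 0 .
\]
The linear quotients hypothesis says $(I_{j-1}:u_j) = (x_i : i \in \set(u_j))$, so $S/(I_{j-1}:u_j)$ is resolved minimally by the Koszul complex on these $|\set(u_j)|$ variables; in homological degree $i$ that complex is $\bigoplus_{\sigma \subseteq \set(u_j),\,|\sigma|=i} S$, with the $\sigma$-summand in internal degree $|\sigma|$, hence in degree $|\sigma| + \deg u_j$ after twisting by $-\deg u_j$.

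First I would assume inductively that $S/I_{j-1}$ has a minimal graded free resolution $F^{(j-1)}$ with basis $\{\gamma^{u_\ell}_\tau : \ell \le j-1,\ \tau \subseteq \set(u_\ell)\}$, where $\gamma^{u_\ell}_\tau$ lies in homological degree $|\tau|+1$ and internal degree $|\tau| + \deg u_\ell$. Lift $\cdot u_j$ to a comparison chain map $\varphi$ from the twisted Koszul complex above into $F^{(j-1)}$; then the mapping cone $C(\varphi)$ is a graded free resolution of $S/I_j$, and in homological degree $i$ it is $F^{(j-1)}_i \oplus \bigl(\bigoplus_{\sigma \subseteq \set(u_j),\,|\sigma|=i-1} S\bigr)(-\deg u_j)$. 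Relabeling the new summands as symbols $\gamma^{u_j}_\sigma$ with $\deg \gamma^{u_j}_\sigma = (i-1) + \deg u_j = |\sigma| + \deg u_j$ produces precisely the claimed basis for $F^{(j)} := C(\varphi)$; the base case $j=1$ is the Koszul resolution $0 \to S(-\deg u_1)\to S\to S/(u_1)\to 0$ with $\set(u_1) = \emptyset$. Taking $F = F^{(m)}$ then gives the proposition, once minimality is checked.

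The real content — and the step I expect to be the main obstacle — is the minimality of the mapping cone. The differential of $C(\varphi)$ is assembled from $d_{F^{(j-1)}}$, the (twisted) Koszul differential, and $\varphi$; the first has entries in $\mf{m}$ by inductive minimality and the second has entries $\pm x_i \in \mf{m}$ automatically, so minimality reduces to showing $\varphi$ has all entries in $\mf{m}$. This is where the ordering $\deg u_1 \le \dots \le \deg u_m$ is essential: the degree-$i$ basis element of the source indexed by $\sigma$ has internal degree $i + \deg u_j$, while every generator of $F^{(j-1)}_i$ has degree $(i-1) + \deg u_\ell$ for some $\ell \le j-1$, and $i + \deg u_j > (i-1) + \deg u_\ell$ exactly because $\deg u_j \ge \deg u_\ell$; hence each $\varphi_i$ is a matrix of homogeneous forms of strictly positive degree (in homological degree $0$ this is just $u_j \in \mf{m}$). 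A secondary, routine point to record is the standard homological algebra: existence of the comparison lift $\varphi$, and the fact that the mapping cone of an injection of complexes resolving $A \hookrightarrow B$ resolves $B/A$. Finally, I would note that regularity of the decomposition function plays no role in this statement — it only enters if one wants an explicit formula for the differential of $F$, not merely the graded free modules.
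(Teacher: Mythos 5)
Your proof is correct and is essentially the paper's own route: Proposition \ref{cone basis} is recalled from \cite{HerzogTakayama}, where it is proved by exactly this iterated mapping cone along the linear-quotient filtration, with minimality secured by the degree comparison you make (which is precisely where the hypothesis $\deg u_1 \leq \dots \leq \deg u_m$ is used). You are also right that the regular decomposition function plays no role here and only enters for the explicit differential in Proposition \ref{delta maps}.
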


We remark that the degree-increasing condition can always be attained: Lemma 2.1 of  \cite{JAHAN2010104} says that if $I$ has linear quotients with respect to some order, then there exists an ordering $u_1,\dots,u_m$ of the generators such that $I$ has linear quotients with respect to that ordering, and that $\deg u_1 \leq \dots \leq \deg u_m$.
\par
When $I$ admits a regular decomposition function, the  maps in the minimal free resolution also have an explicit description.

\begin{proposition}
\label{delta maps}
If $I$ is an ideal with linear quotients with respect to a degree increasing order which admits a regular decomposition function, then the chain maps in the minimal free resolution $F$ of $S/I$ are given by
\[
\delta(\gamma^u_\sigma) = -\sum_{t \in \sigma} \sgn(\sigma,t) x_t \gamma^u_{\sigma \backslash t} + \sum_{t \in \sigma} \sgn(\sigma,t) \frac{x_t u}{g(x_t u)} \gamma^{g(x_t u)}_{\sigma \backslash t}
\]
for $\sigma \neq \emptyset$ and  $\delta(\gamma^u_{\emptyset}) = u$. 
\end{proposition}

\subsection{Statement and Applications of Monomial Basis Theorem}
We set up some standard notation and introduce a technical condition in order to state and apply our main theorem. For any monomial $u$ of $S$, let the support of $u$ be $\supp(u) = \{i: x_i \text{ divides } u\}$, and let $\max(u) = \max(\supp(u))$. Let $\nu_i(u)$ denote the exponent of $x_i$ in $u$. Our technical condition is the following: we say a monomial ideal $I$ with linear quotients and decomposition function $g$  {\bf has nice Koszul lifts} if for each monomial generator $u$, there exists a fixed index $\ell \in \supp(u)$ such that $x_\ell$ divides $\frac{x_t u}{g(x_t u)}$ for all $t \in \set(u)$. 

\begin{theorem}
\label{mon basis}
Let $I$ be a monomial ideal satisfying the following conditions:

\begin{enumerate}
\item $I$ has linear quotients respect to a degree increasing order.
\item $I$ admits a regular decomposition function.
\item $I$ has nice Koszul lifts.
\end{enumerate}

 Then $H_i (K^R)$ has a basis consisting of cycles of the form $[\frac{u}{x_\ell}] e_\ell \wedge e_\sigma$ where $u \in G(I)$, $\sigma \subset \set(u)$ with $|\sigma| = i-1$, and $\ell$ is a choice of index satsifying the nice Koszul lifts property for $u$.

\end{theorem}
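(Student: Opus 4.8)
The strategy is to compare the Koszul complex $K^R$ with the minimal free resolution $F$ of $S/I$ described in Propositions~\ref{cone basis} and~\ref{delta maps}, via the standard fact that $H_i(K^R) \cong \Tor_i^S(S/I,k)$, which is the $i$-th homology of $F \otimes_S k$. Concretely, $\Tor_i^S(S/I,k)$ has a $k$-basis given by the symbols $\gamma^u_\sigma$ with $u \in G(I)$ and $\sigma \subset \set(u)$, $|\sigma| = i-1$, since the resolution is minimal. So the dimension count is automatic from Proposition~\ref{cone basis}, and what remains is to produce, for each such pair $(u,\sigma)$, an \emph{explicit cycle} in $K^R$ in the correct multidegree, and then show these cycles are linearly independent in homology. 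I would take the candidate cycle to be $z^u_\sigma := \left[\tfrac{u}{x_\ell}\right] e_\ell \wedge e_\sigma$, where $\ell$ is the index furnished by the nice Koszul lifts hypothesis for $u$; here one needs $\ell \notin \sigma$, which should follow because $\ell \in \supp(u)$ while $\set(u)$ is disjoint from $\supp(u)$ (a minimal generator is never in the ideal generated by the earlier ones times that generator's own variables — I would check this carefully, as it is where condition (3) interacts with the combinatorics).

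The first technical step is to verify $z^u_\sigma$ is a cycle, i.e. $x_j u / x_\ell \in I$ for every $j \in \{\ell\} \cup \sigma$. For $j = \ell$ this is clear since $u \in I$. For $j \in \sigma \subset \set(u)$, we have $x_j u \in (u_1,\dots,u_{k-1})$ where $u = u_k$, so $g(x_j u)$ is an earlier generator dividing $x_j u$; by the nice Koszul lifts property $x_\ell \mid \tfrac{x_j u}{g(x_j u)}$, hence $g(x_j u) \mid \tfrac{x_j u}{x_\ell}$, so $\tfrac{x_j u}{x_\ell} \in I$. This is exactly the point of condition (3): it guarantees a \emph{single} $\ell$ works simultaneously for all $j \in \set(u)$, so that $e_\ell \wedge e_\sigma$ can be used uniformly. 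The multidegree of $z^u_\sigma$ is then $\deg(u) + \sum_{j\in\sigma}\mathbf e_j$, matching the multidegree of $\gamma^u_\sigma$ in $F \otimes k$.

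The heart of the argument is to exhibit a multigraded chain map $\varphi\colon F \to K^S$ lifting the identity on $S/I$ (or, equivalently, a comparison map $K^S \to F$) such that $\varphi \otimes k$ sends $\gamma^u_\sigma$ to $z^u_\sigma$ up to sign; then functoriality of homology plus minimality of $F$ forces the $z^u_\sigma$ to form a basis of $H_i(K^R)$. I would define $\varphi$ on basis symbols by $\varphi(\gamma^u_\sigma) = \tfrac{u}{x_\ell} \, e_\ell \wedge e_\sigma \in K^S_{|\sigma|+1}$ and check compatibility with the differentials using the explicit formula of Proposition~\ref{delta maps}: applying $\partial$ to $\varphi(\gamma^u_\sigma)$ and comparing with $\varphi(\delta(\gamma^u_\sigma))$. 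The first sum $-\sum_t \sgn(\sigma,t) x_t \gamma^u_{\sigma\setminus t}$ should match the terms of $\partial(z^u_\sigma)$ that drop an index of $\sigma$, while the second sum $\sum_t \sgn(\sigma,t)\tfrac{x_t u}{g(x_t u)}\gamma^{g(x_t u)}_{\sigma\setminus t}$ must reproduce the term that drops the index $\ell$ — and this is precisely where one needs that $\tfrac{x_t u}{x_\ell} = \tfrac{x_t u}{g(x_t u)} \cdot \tfrac{g(x_t u)}{x_\ell}$, i.e. that $x_\ell \mid g(x_t u)$; regularity of the decomposition function (condition (2)) is what keeps the bookkeeping of the indices $\set(g(x_t u)) \subset \set(u)$ consistent so that the chosen $\ell'$ for $g(x_t u)$ can be taken compatibly, and the degree-increasing condition (1) is what makes $F$ a genuine (minimal) resolution in the first place. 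I expect this compatibility check — matching signs and the "boundary of $\ell$" term across the decomposition function — to be the main obstacle; everything else (cycle verification, dimension count, linear independence via minimality) is then formal. As a sanity check one recovers the known $i=1$ case, where $\set(u)=\emptyset$ forces $z^u_\emptyset = [u/x_\ell] e_\ell$, the standard monomial cycle basis of $H_1$.
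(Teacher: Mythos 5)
Your setup (reduce to the Herzog--Takayama resolution $F$ of Propositions \ref{cone basis}--\ref{delta maps}, verify that $\frac{u}{x_\ell}e_\ell\wedge e_\sigma$ is a cycle using nice Koszul lifts, and match multidegrees) is sound and is the same starting point as the paper, but the central step of your plan has a genuine gap: there is no strict chain map $\varphi\colon F\to K^S$ (nor $F\to K^R$) given by $\varphi(\gamma^u_\sigma)=\frac{u}{x_\ell}e_\ell\wedge e_\sigma$. Computing directly, $\partial\varphi(\gamma^u_\sigma)$ contains the term $\pm\, u\,e_\sigma$ (from dropping $e_\ell$), which has no counterpart in $\varphi(\delta\gamma^u_\sigma)$; and the terms $\sgn(\sigma,t)\frac{x_tu}{g(x_tu)}\varphi(\gamma^{g(x_tu)}_{\sigma\setminus t})=\sgn(\sigma,t)\frac{x_tu}{x_{\ell'}}e_{\ell'}\wedge e_{\sigma\setminus t}$ involve the index $\ell'$ chosen for the generator $g(x_tu)$, which in general differs from $\ell$, so they do not cancel against anything in $\partial\varphi(\gamma^u_\sigma)$. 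Regularity of the decomposition function only gives $\set(g(x_tu))\subset\set(u)$; it says nothing about $x_\ell$ dividing $g(x_tu)$ or about being able to take $\ell'=\ell$, so the "bookkeeping" you hope regularity provides is not available. Moreover, even granting a chain map $F\to K^S$ (which exists abstractly as a lift of $S/I\to k$), the induced map is $\Tor^S_i(S/I,k)\to\Tor^S_i(k,k)$, not the balancing isomorphism onto $H_i(K^R)$; so "functoriality plus minimality" as stated does not yield that your cycles form a basis. A smaller slip: $\ell\notin\sigma$ does not follow from disjointness of $\set(u)$ and $\supp(u)$ --- that disjointness is false (for stable ideals $\set(u)=\{1,\dots,\max(u)-1\}$ meets $\supp(u)$); the correct reason is that $\ell\in\set(u)$ would force $g(x_\ell u)$ to divide $u$ while being a strictly earlier generator, contradicting minimality of $u$.

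What the paper actually does, and what your argument is missing, is to realize the balancing isomorphism $H_i(F\otimes k)\xrightarrow{\ \sim\ }H_i(K^R)$ through the double complex $F\otimes K$, by the zig-zag "lift along $\partial$, push down by $\delta$" starting from $\gamma^u_\sigma\otimes 1$. The discrepancies noted above are exactly the correction terms that appear at each stage: one proves by induction an explicit formula for the intermediate elements $\Gamma_a\in F_{i-a}\otimes K_a$, the problematic cross terms (involving $g(x_{s'}g(x_{t'}u))$) cancel in pairs thanks to the identity $g(x_{s'}g(x_{t'}u))=g(x_{t'}g(x_{s'}u))$ for $s',t'\in\set(u)$ (Lemma 1.11 of \cite{HerzogTakayama}), and nice Koszul lifts is used at every lifting step, not just once. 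Only at the bottom of the staircase does one find that $\delta(\Gamma_i)$ equals $\partial\bigl((-1)^{i-1}\epsilon_\sigma\frac{u}{x_\ell}e_\ell\wedge e_\sigma\bigr)$, so the class of $\gamma^u_\sigma$ maps to $[\frac{u}{x_\ell}e_\ell\wedge e_\sigma]$, and the basis statement follows because the zig-zag computes an isomorphism of multigraded homologies. So your proposal needs to be replaced (or completed) by this inductive double-complex computation; as written, the compatibility check you defer is precisely the point that fails.
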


We remark that having nice Koszul lifts is a necessary assumption. Let $J = (x_1 x_3, x_1 x_4, x_2 x_3, x_2 x_4) \subset k[x_1,x_2,x_3,x_4]$ as in Example \ref{eg}. Using this order of the generators, $J$ has linear quotients. Now $\set(x_1x_4) = \{3\}$, $\set(x_2 x_3) = \{1\}$, and $\set(x_2 x_4) = \{1,3\}$, and $\frac{x_1 (x_2 x_4)}{g(x_1 x_2 x_4)} = x_2$, $\frac{x_3 (x_2 x_4)}{g(x_3 x_2 x_4)} = x_4$. Thus, the decomposition function is regular, but $J$ does not have nice Koszul lifts. As we have seen, $[x_1 e_{234} - x_2 e_{134}]$ is a non-zero homology class which cannot be expressed as a linear combination of monomial cycles.

\par

Before proving Theorem \ref{mon basis}, we demonstrate some classes of ideals for which it applies.

\par
\underline{Matroidal ideals}: A square-free monomial ideal $I$ is {\bf matroidal} if $\{\supp(u) : u \in G(I)\}$ is the basis of a matroid. In particular, $I$ is generated in one degree. It is shown in \cite{HerzogTakayama} that $I$ has linear quotients with respect to revlex order and admits a regular decomposition function. We check that they also have nice Koszul lifts.

\begin{proposition}
\label{revlex 1 deg}
If $I$ has linear quotients with respect to revlex order, and is generated in one degree, then for any $u \in G(I)$,  $x_{\max(u)}$ divides $\frac{x_t u}{g(x_t u)}$ for all $t \in \set(u)$, where $\max(u)$ is the highest index $j$ such that $x_j$ divides $u$.
\end{proposition}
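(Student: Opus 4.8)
The plan is first to reduce the statement to the identification of a single variable, and then to pin that variable down using the revlex order together with linear quotients.

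Since $I$ is generated in a single degree $d$ and $u\in G(I)$ has degree $d$ while $x_tu$ has degree $d+1$, the decomposition function forces $\frac{x_tu}{g(x_tu)}$ to be a single variable $x_\ell$, with $\ell\in\supp(x_tu)=\supp(u)\cup\{t\}$ and $x_tu/x_\ell=g(x_tu)\in I$. If $\ell=t$, then $g(x_tu)=u$, so $u$ is already the first generator dividing $x_tu$ and $t\notin\set(u)$; hence for $t\in\set(u)$ we have $\ell\in\supp(u)$, so $\ell\le\max(u)$, and it remains only to exclude $\ell<\max(u)$.

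Next I would exploit the order. Any generator dividing $x_tu$ has degree $d$, hence is of the form $x_tu/x_s$ with $s\in\supp(x_tu)$ and $x_tu/x_s\in I$, and $g(x_tu)$ is the revlex-first such generator. A short comparison in the revlex order shows that, among the candidates $x_tu/x_s$, deleting the largest-index variable of $x_tu$ produces the revlex-extreme one (this is the point where one must be careful about which variable the chosen revlex orientation regards as largest, since the conclusion singles out $\max(u)$ rather than the smallest index). A preceding generator $x_tu/x_s$ with $x_tu/x_s\ne u=x_tu/x_t$ forces $s$ to lie strictly between $t$ and $\max(u)$ or to equal $\max(u)$; combined with $t\le\max(u)$ this makes that largest index equal to $\max(u)$. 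So the proposition becomes equivalent to the inclusion $x_tu/x_{\max(u)}\in I$ for every $t\in\set(u)$: granting it, $x_tu/x_{\max(u)}$ is a generator dividing $x_tu$ that is revlex-before $g(x_tu)=x_tu/x_\ell$ unless $\ell=\max(u)$, which forces $\ell=\max(u)$; conversely $g(x_tu)=x_tu/x_{\max(u)}\in I$ recovers the inclusion.

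The remaining and main step is to show $\set(u)\subseteq\{t : x_tu/x_{\max(u)}\in I\}$. I would prove this from the compatibility of linear quotients with the revlex order. The element $t\in\set(u)$ yields a generator $v=x_tu/x_s$ preceding $u$, with $s$ on the $\max(u)$-side of $t$; if $s=\max(u)$ we are done. Otherwise I would induct on the position of $u$ in the revlex order, performing an exchange that moves the deleted index from $s$ toward $\max(u)$, using that $\max(v)=\max(u)$ and that the index in question lies in $\set(v)$ as long as it is not already maximal among those that produce an element of $I$. The residual case---where that index is maximal but still below $\max(u)$, so that $v=g(x_tu)$---is the delicate one, and it is where I expect the real difficulty to lie; closing it should come from the explicit description of the sets $\set(\cdot)$ and of the decomposition function for ideals with linear quotients with respect to revlex order established in \cite{HerzogTakayama}, and I would double-check the whole argument on small matroidal ideals, where $x_tu/x_{\max(u)}\in I$ is an instance of the basis-exchange axiom.
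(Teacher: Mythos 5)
Your reduction is correct, and it is essentially what the paper's two-sentence proof compresses: since $I$ is generated in a single degree, every generator dividing $x_tu$ has the form $x_tu/x_s$ with $s\in\supp(x_tu)$, and $g(x_tu)$ is the first of these in the revlex listing; with the orientation forced by the paper's conventions elsewhere (generators listed in decreasing revlex order, so that ``earlier'' means revlex-larger, which is what makes $\set(u)=\{1,\dots,\max(u)-1\}$ come out for stable ideals), the revlex-first divisor is the one deleting the largest index $s$ with $x_tu/x_s\in I$. So, exactly as you say, the proposition is equivalent to the inclusion $x_tu/x_{\max(u)}\in I$ for every $t\in\set(u)$, and your observations that the deleted index lies in $\supp(u)$ and exceeds $t$ are right.

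The gap is that this inclusion is never proved, and it cannot be deduced from ``linear quotients with respect to revlex order plus generated in one degree'' alone. Take $I=(x_1x_3,x_2x_3)\subset k[x_1,x_2,x_3]$, which is even matroidal. In the revlex listing $x_1x_3,\,x_2x_3$ the ideal has linear quotients, $\set(x_2x_3)=\{1\}$, and $g(x_1x_2x_3)=x_1x_3$, so $\frac{x_1\cdot x_2x_3}{g(x_1x_2x_3)}=x_2$, which is not divisible by $x_{\max(x_2x_3)}=x_3$; listing the two generators in the opposite order gives the same failure with $u=x_1x_3$, $t=2$. So the ``residual case'' you flag as delicate is not closable by an induction on revlex position: one needs genuinely extra input (an exchange property as for stable ideals, which is what the paper's subsequent proposition actually supplies, or a different hypothesis), or else the conclusion must be weakened to ``some fixed variable of $u$ divides all the quotients'' (which does hold in the example, with $x_2$ in place of $x_3$, so the downstream ``nice Koszul lifts'' applications survive in this case). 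Note that the paper's own argument --- $g(x_tu)$ is comparable to $u$ in revlex and of the same degree, hence the claim --- runs into the same point: comparability with $u$ only pins the deleted index relative to $t$, not to $\max(u)$. Your instinct that the real difficulty sits exactly at this step is sound; under every standard reading of the revlex convention the example above satisfies the stated hypotheses while violating the stated conclusion, so the step is not merely hard but requires amending the statement.
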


\begin{proof}
The generator $g(x_t u) \in G(I)$ is smaller than $u$ in revlex order and is of the same degree. Thus $x_{\max(u)}$ divides it.
\end{proof}

\begin{corollary}
Matroidal ideals admit a monomial cycle basis in Koszul homology.
\end{corollary}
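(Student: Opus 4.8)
The plan is to deduce this directly from Theorem \ref{mon basis} by verifying its three hypotheses for a matroidal ideal $I$. Since a matroidal ideal is generated in a single degree, any ordering of $G(I)$ is (weakly) degree increasing, so it suffices to recall from \cite{HerzogTakayama} that $I$ has linear quotients with respect to the reverse lexicographic order and that this order produces a regular decomposition function $g$. This settles conditions (1) and (2) of Theorem \ref{mon basis}.

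For condition (3), the nice Koszul lifts property, I would, for each $u \in G(I)$, take the distinguished index to be $\ell = \max(u) = \max(\supp(u))$, which lies in $\supp(u)$ by definition. Proposition \ref{revlex 1 deg} says precisely that $x_{\max(u)}$ divides $\frac{x_t u}{g(x_t u)}$ for every $t \in \set(u)$, which is exactly what the nice Koszul lifts condition demands. Hence all three hypotheses of Theorem \ref{mon basis} hold, and its conclusion yields a basis of $H_i(K^R)$ consisting of the monomial cycles $\left[\tfrac{u}{x_{\max(u)}}\right] e_{\max(u)} \wedge e_\sigma$ with $u \in G(I)$ and $\sigma \subset \set(u)$, $|\sigma| = i-1$.

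There is essentially no obstacle remaining once Theorem \ref{mon basis}, Proposition \ref{revlex 1 deg}, and the cited facts from \cite{HerzogTakayama} are in hand; the real content has been front-loaded. The only points needing a moment's care are that $\max(u) \in \supp(u)$ (immediate) and that the \emph{same} index $\ell$ works uniformly over all $t \in \set(u)$ --- but this uniformity is exactly what Proposition \ref{revlex 1 deg} delivers, since $\max(u)$ does not depend on $t$. If one wanted a fully self-contained account one could additionally recall why revlex order gives linear quotients for a matroidal ideal (this uses the symmetric exchange property of matroid bases), but that argument is already in the cited reference.
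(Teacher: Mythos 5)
Your proposal is correct and is essentially the paper's own argument: the paper likewise cites \cite{HerzogTakayama} for linear quotients in revlex order and the regular decomposition function (with degree-increase automatic since matroidal ideals are generated in one degree), and invokes Proposition \ref{revlex 1 deg} with $\ell = \max(u)$ to get nice Koszul lifts, so Theorem \ref{mon basis} applies.
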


\par
\underline{Stable Ideals}: A monomial ideal $I$ is {\bf stable} if for any $u \in G(I)$ and $i \leq \max(u)$, $x_i \frac{u}{x_{\max(u)}} \in I$. A squarefree monomial ideal $I$ is {\bf squarefree stable} if for any $u \in G(I)$ and $i < \max(u)$ with $i \not\in \supp(u)$, $x_i \frac{u}{x_{\max(u)}} \in I$. Both stable and squarefree stable ideals have linear quotients with respect to the reverse lexicographic order. In the squarefree case, $\set(u) = \{1,\dots,\max(u)-1\}$, and in the squarefree stable case $\set(u) = \{i \in \supp(u) : i < \max(u) \}$. In either case, it is easy to see that they admit regular decomposition functions (see \cite{HerzogTakayama}). \par
We apply our theorem to these ideals, giving another way of showing that stable ideals admit a monomial basis, which was originally proved in \cite{aramova} and \cite{peeva}. 

\begin{proposition}
Let $I$ be a stable or squarefree stable ideal. Then for $u \in G(I)$, $x_{\max(u)}$ divides $\frac{x_t u}{g(x_t u)}$ for all $t \in \set(u)$.
\end{proposition}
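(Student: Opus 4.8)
The plan is to show that, for every generator $u \in G(I)$, the maximal index $\ell := \max(u)$ witnesses the nice-Koszul-lifts property; Theorem~\ref{mon basis} then applies, the remaining two hypotheses (linear quotients for a degree-increasing order, and a regular decomposition function) being already known for stable and squarefree stable ideals. So I fix $u \in G(I)$, set $m = \max(u)$, and take $t \in \set(u)$; the explicit descriptions of $\set(u)$ recalled just above give $t < m$ in either case. Writing $v := g(x_t u) \in G(I)$ and $x_t u = v \cdot h$, the desired divisibility $x_m \mid \frac{x_t u}{g(x_t u)}$ is exactly $x_m \mid h$, and this is what I would prove.

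The tool I would invoke is the Eliahou--Kervaire description of the decomposition function of a (squarefree) stable ideal relative to the reverse lexicographic order: for a monomial $w \in I$, the generator $g(w)$ is characterised as the unique $v \in G(I)$ with $v \mid w$ such that, in the factorization $w = v \cdot h$, one has $\max(v) \le \min(\supp h)$ (and in the squarefree case additionally $\supp v \cap \supp h = \emptyset$, with strict inequality). I would cite \cite{HerzogTakayama} (or \cite{HerzogJürgen2011Mi}) for this; it is the one place where stability is genuinely used, so I would take care that the order appearing there is the same one underlying Propositions~\ref{cone basis} and~\ref{delta maps}.

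Granting this characterization, the remainder is an exponent chase. Applying it to $w = x_t u$ gives $\max(v) \le \min(\supp h)$. Since $t < m$ we have $\nu_m(x_t u) = \nu_m(u) \ge 1$, hence $\nu_m(v) + \nu_m(h) \ge 1$. Suppose $\nu_m(h) = 0$; then $m \in \supp v$, so $\max(v) \ge m$, while $v \mid x_t u$ forces $\max(v) \le m$, whence $\max(v) = m$. Then $\min(\supp h) \ge m$, and since $\nu_j(x_t u) = 0$ for every $j > m$ this leaves $h = 1$, i.e.\ $x_t u = v \in G(I)$ --- impossible, since the generator $u$ is a proper divisor of $x_t u$. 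Hence $\nu_m(h) \ge 1$, i.e.\ $x_m \mid h$, which is the assertion. The squarefree stable case runs verbatim with the squarefree Eliahou--Kervaire decomposition, using that $t \notin \supp u$ so that $x_t u$ is again squarefree.

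The step I expect to be the main obstacle is not the computation but the bookkeeping layer: pinning down the Eliahou--Kervaire factorization characterization of $g$ for exactly the linear-quotients order used in this section, and stating the squarefree analogue with the correct hypotheses (in particular getting the description of $\set(u)$ in the squarefree stable case right). Once that is in place, the exponent argument above closes the proof, and combined with the preceding propositions it yields that matroidal, stable, and squarefree stable ideals all satisfy the hypotheses of Theorem~\ref{mon basis}.
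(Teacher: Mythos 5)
Your argument is correct, and it takes a genuinely different route from the paper's. The paper splits into two cases: when $\deg g(x_t u)=\deg u$ it appeals to Proposition~\ref{revlex 1 deg}, and when $\deg g(x_t u)<\deg u$ it assumes $\nu_{\max(u)}(g(x_t u))=\nu_{\max(u)}(u)$, finds an $i<\max(u)$ with $\nu_i(g(x_t u))<\nu_i(u)$, and uses the stable exchange $x_i\,g(x_tu)/x_{\max(u)}\in I$ to produce a generator dividing $x_t u$ that precedes $g(x_t u)$ in the order, a contradiction. You instead route everything through the Eliahou--Kervaire description of the decomposition function ($w=g(w)h$ with $\max(g(w))\le \min(\supp h)$) and run one uniform support chase; that chase is airtight, including the endgame $h=1\Rightarrow x_tu\in G(I)$, impossible since $u$ properly divides $x_tu$, and the squarefree case does go through verbatim because $t\notin\supp(u)$ keeps $x_tu$ squarefree. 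What your route buys is a single argument covering both degree cases and making the role of stability transparent; what the paper's route buys is independence from the Eliahou--Kervaire factorization. The one step you must still supply is precisely the one you flag: that for a (squarefree) stable ideal the decomposition function of the revlex linear-quotients order is the Eliahou--Kervaire factor. This is true, but it is exactly where stability enters, and it is not a formal consequence of ``earlier in the order and same degree'': in $(x_1x_3,x_2x_3)$, which has linear quotients with respect to revlex but is not stable, the quotient $x_tu/g(x_tu)$ attached to the second generator is $x_1$ or $x_2$, never $x_3$. So either pin down the exact statement in the literature (the mapping-cone resolution of a stable ideal being the Eliahou--Kervaire resolution, cf.\ \cite{HerzogTakayama}), or include the short proof: by repeated stable exchanges, if a generator of degree $d$ divides $w$ then the product of the $d$ smallest variables of $w$ (with multiplicity) lies in $I$; hence the Eliahou--Kervaire factor of $w$ has minimal degree among generators dividing $w$ and is revlex-greatest among those of that degree, so it is the earliest generator dividing $w$, i.e.\ it equals $g(w)$. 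With that supplied, your proof is complete and, in the equal-degree case, arguably more self-contained than the paper's appeal to Proposition~\ref{revlex 1 deg}.
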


\begin{proof}
By Proposition \ref{revlex 1 deg}, it suffices to check when $\deg g(x_t u) < \deg(u)$. Let $v = g(x_t u)$ and $m = \max(u)$, and suppose toward a contradiction that $\nu_{m}(v) = \nu_m(u)$. Since $t < m$ and $\deg v < \deg(x_t u) - 1$, there is some $i < m$ such that $\nu_i(v) < \nu_i(u)$.
\par
Consider the monomial $x_i \frac{v}{x_m}$. It is greater than $v$ in revlex order since $i < m$, and it divides $x_t u$ because $\nu_i(v) < \nu_i(u)$. Thus, there is some generator $v' \in G(I)$ dividing $(x_i/x_m) v$. Now $v' < v$ and $v'$ divides $x_t u$. Thus, $v \neq g(x_t u)$, and we have a contradiction. 
\end{proof}

\begin{corollary}
Stable and squarefree stable ideals admit a monomial basis in Koszul homology.
\end{corollary}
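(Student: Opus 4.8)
The plan is to obtain this as an immediate consequence of Theorem \ref{mon basis} once we verify its three hypotheses for stable and squarefree stable ideals. First I would recall the facts already recorded in this subsection: such ideals have linear quotients with respect to the reverse lexicographic order, with $\set(u) = \{1,\dots,\max(u)-1\}$ in the stable case and $\set(u) = \{i \in \supp(u) : i < \max(u)\}$ in the squarefree stable case, and the associated decomposition function $g$ is regular (both are established in \cite{HerzogTakayama}). Since revlex refines the degree order on monomials, re-sorting the minimal generators so that their degrees are non-decreasing only interleaves generators of distinct degrees and changes none of the sets $\set(u)$; hence this reordering still has linear quotients and still admits the same regular decomposition function, so conditions (1) and (2) of Theorem \ref{mon basis} hold for a degree-increasing order.

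The only genuinely new input is condition (3), that $I$ has nice Koszul lifts, and this is precisely the content of the preceding proposition: for every $u \in G(I)$, the single index $\ell = \max(u)$ satisfies $x_{\max(u)} \mid \tfrac{x_t u}{g(x_t u)}$ for all $t \in \set(u)$. Because this $\ell$ depends only on $u$ and not on $t$, it is exactly the index required by the definition of nice Koszul lifts.

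Having verified conditions (1)--(3), I would then simply invoke Theorem \ref{mon basis} to conclude that, for each $i$, the vector space $H_i(K^R)$ has a basis given by the cycles $\bigl[\tfrac{u}{x_{\max(u)}} e_{\max(u)} \wedge e_\sigma\bigr]$ with $u \in G(I)$ and $\sigma \subset \set(u)$, $|\sigma| = i-1$; in particular $I$ admits a monomial basis in Koszul homology. I would remark that in the stable case this recovers the monomial bases of \cite{aramova} and \cite{peeva}.

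I do not expect a real obstacle in this argument, since all the substantive work has been done in the propositions leading up to it. The only point deserving a sentence of care is the degree-increasing reordering needed to match hypothesis (1) and the indexing of Proposition \ref{cone basis}; as noted above, this is harmless for revlex precisely because revlex is compatible with the total degree, so the reordering preserves both linear quotients and regularity of the decomposition function, and therefore also the nice Koszul lifts property supplied by the preceding proposition.
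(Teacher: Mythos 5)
Your proposal is correct and follows essentially the same route as the paper: the corollary is an immediate application of Theorem \ref{mon basis}, with linear quotients and regularity of the decomposition function taken from \cite{HerzogTakayama} and the nice Koszul lifts property supplied by the preceding proposition with $\ell = \max(u)$. Your extra sentence about arranging a degree-increasing order is a reasonable bit of care that the paper handles via the remark following Proposition \ref{cone basis} (Lemma 2.1 of \cite{JAHAN2010104}).
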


We note that weaker versions of matroidal and stable ideals do not satisfy Theorem \ref{mon basis}. Weakly polymatroidal ideals and weakly stable ideals generated in one degree (see, e.g., \cite{MohammadiFatemeh2010Wpiw} for definitions of both) have linear quotients, but do not admit regular decomposition functions, as the ideal $(x_1x_2x_4,x_1x_2x_5,x_1x_3x_5) \subset k[x_1,\dots,x_5]$ shows. However, if it can be shown that polymatroidal ideals admit a regular decomposition function, then by Proposition \ref{revlex 1 deg}, they would satisfy the conditions of Theorem \ref{mon basis}.

\par
Not all monomial ideals which admit a monomial basis satisfy the assumptions of Theorem \ref{mon basis}. Monomial ideals in three or fewer variables (as noted above), as well as Cohen-Macaulay $p$-Borel fixed ideals \cite{p-Borel} and certain other $p$-Borel principal ideals \cite{POPESCU2008132} all have a monomial basis in Koszul homology. However, these classes of ideals do not have linear quotients in general. For example, $I = (x_1^5,x_1^4 x_2,x_1 x_2^4,x_2^5) \subset \mathbb{F}_2[x_1,x_2]$ is a Cohen-Macaulay $2$-Borel fixed principal ideal, but it does not have a linear resolution. Hence, $I$ does not have linear quotients.

\par
\underline{Symmetric Shifted Ideals:}
A special class of of symmetric monomial ideals are the {\bf symmetric shifted ideals}, introduced in \cite{BIERMANN2020312}. They are defined by the property that if $\lambda \in \Lambda(I)$, and $\lambda_k < \lambda_1$ for $k > 1$, then $x^\lambda(\frac{x_k}{x_1}) \in I$ (where $x^\lambda = x_1^{\lambda_1} \dots x_n^{\lambda_n}$). 
\par
Theorem 3.2 in \cite{BIERMANN2020312} says that symmetric shifted ideals have linear quotients given a particular monomial order, which we will define. Let $<_{\lex}$ be the graded lexicographic order on $\Z^n$. That is, $\bf a <_{\lex} \bf b$ if $|\bf a| < |\bf b|$ or $|\bf a| = |\bf b|$ and the leftmost non-zero entry of $\bf a - \bf b$ is negative. For two monomials $u$ and $v$, we can write them uniquely as $u = \pi(x^\lambda)$ and $v = \rho(x^\mu)$, where $\lambda,\mu$ are partitions and $\pi,\rho$ are permutations. The monomial order we will use is $u \prec v$ if $\lambda <_{\lex} \mu$ or if $\lambda = \mu$ and $u <_{\lex} v$. The motto: grade by partitions first, then by lex order. Ordering the generators of a symmetric shifted ideal $I$ in increasing $\prec$-order, $I$ has linear quotients. Furthermore, we have a nice description of $\set(u)$ for $u \in G(I)$. Let $u = \pi(x^\lambda)$ and $m(u) := \max\{i : \nu_i(u) = \lambda_1\}$. Then
\[
\set(u) = \{i : \nu_i(u) < \lambda_1 - 1\} \cup \{j : \nu_j(u) = \lambda_1 - 1 \text{ and } j < m(u) \}
\]
\par
It's clear that $\prec$ is a degree increasing order. The next two propositions show that symmetric ideals satisfy the rest of the criteria for Theorem \ref{mon basis}.

\begin{proposition}
Let $I$ be a symmetric shifted ideal. Then $I$ admits a regular decomposition function.

\end{proposition}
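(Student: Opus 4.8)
## Proof Proposal

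The plan is to verify directly that the decomposition function $g$ associated to the order $\prec$ on the generators of a symmetric shifted ideal $I$ satisfies the regularity condition $\set(g(x_s u)) \subset \set(u)$ for every $u \in G(I)$ and every $s \in \set(u)$. First I would fix a generator $u = \pi(x^\lambda)$ and an index $s \in \set(u)$, and compute $g(x_s u)$ explicitly using the shiftedness hypothesis. The key observation is that $x_s u$ is a monomial of $I$ lying in the ``partition grade'' of $\lambda$ (since $\set(u)$ only contains indices $i$ with $\nu_i(u) \le \lambda_1 - 1$, multiplying by $x_s$ cannot push the partition shape past $\lambda$), so $g(x_s u)$ is the $\prec$-least generator dividing $x_s u$, and this generator will itself have partition shape $\lambda$. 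Concretely, I expect $g(x_s u)$ to be obtained from $u$ by the ``move'' $x^\lambda \mapsto x^\lambda \cdot (x_s / x_{m})$ for an appropriate top index $m$ (of value $\lambda_1$ in $u$), which is a generator precisely by the defining property of symmetric shifted ideals, and is $\prec$-smaller than $u$ because it is lex-smaller within the same partition grade.

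Next I would compare $\set(g(x_s u))$ with $\set(u)$ using the explicit formula
\[
\set(v) = \{i : \nu_i(v) < \lambda_1 - 1\} \cup \{j : \nu_j(v) = \lambda_1 - 1 \text{ and } j < m(v)\}
\]
applied to both $v = u$ and $v = g(x_s u)$. Since $g(x_s u)$ and $u$ differ in exponent only at the two coordinates $s$ and $m$ — with $\nu_s$ increasing by one and $\nu_m$ decreasing by one — I would do a short case analysis on where an index $i \in \set(g(x_s u))$ can sit relative to $s$ and $m$, and check in each case that $i \in \set(u)$. The coordinate $m$ where the exponent dropped needs care: after the move, $\nu_m(g(x_s u)) = \lambda_1 - 1$, and I must check that if $m \in \set(g(x_s u))$ — i.e. $m < m(g(x_s u))$ — then $m$ was already in $\set(u)$; but $\nu_m(u) = \lambda_1$ means $m \notin \set(u)$, so I need to argue this case cannot occur, which should follow from $m$ being chosen as the largest index of value $\lambda_1$ in $u$.

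The main obstacle I anticipate is pinning down exactly which index $m$ gets decremented when forming $g(x_s u)$, and showing that the $\prec$-minimal generator dividing $x_s u$ is this specific one rather than some other generator of shape $\lambda$ obtained by a different rearrangement — in other words, that the decomposition function genuinely selects the ``canonical'' move. This requires a careful argument that among all generators dividing $x_s u$, the lex-least one in partition grade $\lambda$ is the one that restores the exponent vector of $u$ at coordinate $s$ to $\lambda_1 - 1$ by borrowing from the rightmost maximal coordinate. Once that identification is secured, the containment of $\set$'s is a finite bookkeeping check using the displayed formula, and I would also note that the degree-one case ($\deg g(x_s u) = \deg u$, handled by Proposition \ref{revlex 1 deg}) does not literally apply here since the order is $\prec$ rather than revlex, so the whole argument must be made intrinsically in terms of $\prec$ and the set formula.
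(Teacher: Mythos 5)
There is a genuine gap, and it sits exactly at the step you flag as the crux: the identification of $g(x_s u)$ as the ``canonical move'' $x_s u/x_m$ of the same partition shape $\lambda$ as $u$. That claim is false in general. By definition $g(x_s u)$ is the $\prec$-least generator dividing $x_s u$, and a symmetric shifted ideal is typically generated by the orbits of several partitions, possibly of different degrees, so this generator can come from a $<_{\lex}$-smaller partition than $\lambda$. For the symmetric shifted ideal with $\Lambda(I)=\{(2,1,0),(1,1,1)\}$ in $k[x_1,x_2,x_3]$ (the example opening Section 5), take $u=x_1^2x_2$, so $\set(u)=\{3\}$; then the generators dividing $x_3u=x_1^2x_2x_3$ are $x_1^2x_2$, $x_1^2x_3$ and $x_1x_2x_3$, and the $\prec$-least is $g(x_3u)=x_1x_2x_3$, of shape $(1,1,1)\neq\lambda$. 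When $\Lambda(I)$ contains partitions of different degrees the situation is worse: for $\Lambda(I)=\{(1,1,0,0),(3,0,0,0)\}$, with $u=x_1^3$ and $s=2\in\set(u)=\{2,3,4\}$ one gets $g(x_2u)=x_1x_2$, of strictly smaller degree, so $g(x_su)$ is not of the form $x_su/x_m$ at all and $u$, $g(x_su)$ do not differ by one in just two coordinates. Consequently the proposed bookkeeping never gets started; note also that $\set(g(x_su))$ must be computed from the displayed formula using the maximal part $\mu_1$ of the shape $\mu$ of $g(x_su)$ itself, not $\lambda_1$, which your case analysis implicitly conflates.

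The paper's proof sidesteps the identification entirely. Writing $x_su=vc$ with $v=g(x_su)=\rho(x^\mu)$ and $c$ a monomial, it invokes Lemma 1.7(a) of \cite{HerzogTakayama}, which gives $\set(v)\cap\supp(c)=\emptyset$; hence any $x_k$ dividing $c$ has $\nu_k(v)=\mu_1$, or $\nu_k(v)=\mu_1-1$ with $k>m(v)$. Then for $t\in\set(v)$ with $t\neq s$ (the case $t=s$ is trivial since $s\in\set(u)$): if $\nu_t(v)<\mu_1-1$ then $t\notin\supp(c)$, so $\nu_t(u)=\nu_t(v)<\lambda_1-1$ and $t\in\set(u)$; if $\nu_t(v)=\mu_1-1$ and $t<m(v)$, then either $\mu_1<\lambda_1$, giving again $\nu_t(u)=\nu_t(v)<\lambda_1-1$, or $\mu_1=\lambda_1$, in which case $c$ is squarefree and supported on indices beyond $m(v)$, forcing $m(u)=m(vc)>m(v)>t$ and hence $t\in\set(u)$. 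If you want to rescue your plan, you must run this kind of analysis for an unknown divisor $v$ of arbitrary shape $\mu$, rather than for a canonical same-shape rearrangement of $u$.
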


\begin{proof}
Let $u \in G(I)$, $s \in \set(u)$, and $v = g(x_s u)$.  Let $\lambda,\mu$ partitions such that $u = \pi(x^\lambda)$ and $v = \rho(x^\mu)$. Let $x_s u = v c$ where $c$ is some monomial. By Lemma 1.7(a) of \cite{HerzogTakayama}, $\set(v) \cap \supp(c) = \emptyset$. So, $x_k$ divides $c$ only if $\nu_k(v) = \mu_1$ or $\nu_k(v) = \mu_{1}-1$ and $k > m(v)$. 
\par
Note that the highest exponent it $x_s u = vc$ is $\lambda_1$ since $s \in \set(u)$. Since $v$ divides $x_s u$, $\mu_1 \leq \lambda_1$. Let $t \in \set(v)$ such that $t \neq s$. If $\nu_t(v) < \mu_1 - 1$, then $\nu_t(c) = 0$, so $\nu_t(u) = \nu_t(v) < \lambda_1 - 1$, and $t \in \set(u)$. Assume $\nu_t(v) = \mu_1 - 1$ with  $t < m(v)$, and furthermore assume that $\mu_1 = \lambda_1$ (else $\nu_t(v) = \nu_t(u) < \lambda_1-1$). In this case, $c$ is squarefree, and consists only of variables $x_k$ such that $\nu_k(v) = \mu_1 - 1$ and $k > m(v)$. So, $m(v c) > m(v)$. But $m(vc) = m(x_s u) = m(u)$. So, $t < m(u)$ and thus $t \in \set(u)$. 

\end{proof}

\begin{proposition}
Let $I$ be a symmetric shifted ideal with $u \in G(I)$. Then $x_{m(u)}$ divides $\frac{x_s u}{g(x_s u)}$ for all $s \in \set(u)$.

\end{proposition}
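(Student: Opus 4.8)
The plan is to unwind the definitions and track where the ``missing'' variable $x_{m(u)}$ lives after dividing $x_s u$ by its decomposition generator. Fix $u \in G(I)$, write $u = \pi(x^\lambda)$, and fix $s \in \set(u)$. Set $v = g(x_s u)$, so $x_s u = vc$ for some monomial $c$, and write $v = \rho(x^\mu)$. The goal is to show $x_{m(u)} \mid c$, i.e. $\nu_{m(u)}(v) < \nu_{m(u)}(x_s u) = \nu_{m(u)}(u) = \lambda_1$ (the last equality because $m(u)$ is by definition an index where $u$ attains its top exponent $\lambda_1$, and $s \neq m(u)$ since $s \in \set(u)$ forces $\nu_s(u) < \lambda_1$).

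First I would handle the easy case $\deg v < \deg(x_s u)$, equivalently $c \neq 1$: here one argues directly that $v$, being strictly $\prec$-smaller than $u$ in a way that forces $\mu <_{\lex} \lambda$ or reduces the top part, cannot match $u$ in the $m(u)$-slot — more precisely, by Herzog–Takayama's Lemma 1.7(a) (already invoked in the previous proposition) $\set(v) \cap \supp(c) = \emptyset$, and combining this with the structure of $\set$ for symmetric shifted ideals pins down which variables $c$ can contain. Then the main case is $c = x_s$ (a single variable), i.e. $v = x_s u / x_s \cdot$(something) — wait, rather $vc = x_s u$ with $\deg c = 1$, so $c$ is a single variable $x_k$ and $v = x_s u / x_k$. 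I would argue that $k = m(u)$: since $v \in G(I)$ must be $\prec u$, and since removing a variable from position $k$ and adding one at position $s$ must produce something lexicographically earlier (or partition-earlier), the only way to get a generator is to take the variable from the \emph{last} maximal slot, which is exactly $m(u)$; using the symmetric shifted property one checks $x_s u / x_{m(u)} \in I$ and that it is the $\prec$-minimal generator dividing $x_s u$, hence equals $g(x_s u) = v$. Then $\nu_{m(u)}(v) = \lambda_1 - 1 < \lambda_1$ as needed.

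The main obstacle I anticipate is the bookkeeping in the case $\deg v < \deg(x_s u) - 1$ (so $c$ is not squarefree of degree $\le 1$): one must rule out that $v$ happens to agree with $u$ in exponent at position $m(u)$. The right tool is again $\set(v) \cap \supp(c) = \emptyset$ together with the explicit description $\set(v) = \{i : \nu_i(v) < \mu_1 - 1\} \cup \{j : \nu_j(v) = \mu_1 - 1,\ j < m(v)\}$: this tells us every variable of $c$ sits at an index where $v$ has exponent $\mu_1$, or exponent $\mu_1 - 1$ beyond position $m(v)$. Since $x_s u = vc$ has top exponent $\lambda_1$ and $v$ divides it, $\mu_1 \le \lambda_1$. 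If $\mu_1 < \lambda_1$ then $v$ has no variable at exponent $\lambda_1$, so in particular $\nu_{m(u)}(v) \le \mu_1 < \lambda_1$ and we are done immediately. If $\mu_1 = \lambda_1$, the variables of $c$ lying at $v$-exponent $\mu_1 = \lambda_1$ would, after multiplication, push $x_s u$ to exponent $\lambda_1 + 1$ unless they coincide with the slot $s$ — but $\nu_s(u) < \lambda_1$, so actually $c$ consists only of variables at $v$-exponent $\mu_1 - 1 = \lambda_1 - 1$ sitting past $m(v)$, plus possibly $x_s$; this forces $m(vc) > m(v)$, and since $m(vc) = m(x_s u) = m(u)$ we get that $x_{m(u)}$ is one of those past-$m(v)$ variables, hence divides $c = \dfrac{x_s u}{g(x_s u)}$, completing the proof.
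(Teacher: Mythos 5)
Your final paragraph contains a correct proof, but it is a genuinely different route from the paper's. The paper argues by contradiction: assuming $\nu_{m(u)}(v)=\lambda_1$ for $v=g(x_su)$, it finds an index $t\in\set(v)$ with $\nu_t(v)<\nu_t(x_su)$ and uses the symmetric shifted exchange property to produce $\frac{x_t v}{x_{m(v)}}\in I$, which divides $x_su$ and is $\prec$-smaller than $v$, contradicting the minimality built into the decomposition function. You instead argue directly, essentially transporting the technique the paper uses for the preceding proposition (regularity of $g$): Lemma 1.7(a) of Herzog--Takayama gives $\set(v)\cap\supp(c)=\emptyset$ for $c=x_su/v$, and together with the explicit description of $\set(v)$ this pins down $\supp(c)$; then either $\mu_1<\lambda_1$, in which case $\nu_{m(u)}(v)<\lambda_1$ immediately, or $\mu_1=\lambda_1$, in which case every variable of $c$ sits at a $v$-exponent-$(\mu_1-1)$ slot beyond $m(v)$, forcing $m(u)=m(x_su)=m(vc)>m(v)$ and hence $\nu_{m(u)}(v)<\lambda_1$, i.e. $x_{m(u)}$ divides $c$. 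What your approach buys is uniformity with the regularity proof (the same two tools, no explicit use of the shifted exchange move or of $\prec$-comparisons); what the paper's contradiction buys is brevity, since it never needs to control all of $\supp(c)$. A few spots to tighten: the carve-out ``plus possibly $x_s$'' is unnecessary and slightly risky---if $x_s$ divides $c$ it is covered by the same reasoning (it lies outside $\set(v)$ and cannot occupy a $v$-exponent-$\mu_1$ slot, hence lies past $m(v)$), and without that observation the step $m(vc)>m(v)$ could fail when $c$ is a power of $x_s$; you should also record that $c\neq 1$ (since $u$ properly divides $x_su$, the latter is not a minimal generator) and that $m(x_su)=m(u)$ uses $s<m(u)$ whenever $\nu_s(u)=\lambda_1-1$, which the description of $\set(u)$ supplies. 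The hand-wavy middle paragraph (claiming in the degree-one case that $g(x_su)=x_su/x_{m(u)}$, with only heuristic justification) is subsumed by your final argument and should simply be dropped.
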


\begin{proof}
Let $u = \pi(x^\lambda)$ and let $v = g(x_s u)$. Suppose toward a contradiction that $\nu_{m(u)}(v) = \lambda_1$. Our goal is to show that, for some $t \in \set(v)$, $\frac{x_t v}{x_{m(v)}}$ divides $x_s u$. Now $\frac{x_t v}{x_{m(v)}} \in I$ because $I$ is symmetric shifted, and furthermore $\frac{x_t v}{x_{m(v)}} \prec v$. This implies that $v$ is not in fact $g(x_s u)$ and we have our contradiction.
\par
Since $v$ divides $x_s u$, there exists a $t$ such that $\nu_t(v) < \nu_t(x_s u)$. Clearly $\frac{x_t v}{x_{m(v)}}$ also divides $x_s u$. We wish to show that in fact $t$ is in $\set(v)$. Since $s \in \set(u)$, $\nu_s(x_s u) \leq \lambda_1$ and if equality holds, $s < m(u)$. This, plus the assumption that $\nu_{m(u)}(v) = \lambda_1$, implies that $m(v) = m(u)$. 
\par
Now if $t > m(u) = m(v)$, $\nu_t(v) \leq \lambda_1 - 1$. But equality can't hold, else $\nu_t(x_s u) = \lambda_1$, and either $m(u) > m(v)$ or $\nu_s(x_s u) = \lambda_1-1$ and $s > m(u)$, a contradiction regardless. So, $\nu_t(v) < \lambda_1 - 1$ and therefore $t \in \set(v)$. 
\par
Now suppose $t < m(v)$. If $t = s$, then $v$ not only divides $x_s u$ but also $u$, so $u \not\in G(I)$. So  assume that $t \neq s$. Then $\nu_t(v) < \nu_t(x_s u) = \nu_t(u) \leq \lambda_1$. So $t < m(v) =  m(u)$ and $\nu_t(v) < \lambda_1$. Thus, $t \in \set(v)$. 
\par
So, $t \in \set(v)$ and $\frac{x_t v}{x_{m(v)}}$ divides $x_s u$, which is our desired contradiction.

\end{proof}

\subsection{Proof of Theorem \ref{mon basis}}
\label{proof mon basis}

The proof technique is a standard way of demonstrating a basis for Koszul homology, relying on the `balancing of Tor'. Let $(F,\delta)$ be the minimal $S$-resolution of $R = S/I$. There are graded quasi-isomorphisms 
\[
K^R = R \otimes K \xleftarrow{\sim} F \otimes K \xrightarrow{\sim} F \otimes k
\]
inducing an isomorphism on homologies $H_i(F \otimes k) \to H_i (K^R)$, which is computed by subsequent lifting along $\partial$ and mapping down by $\delta$ along the portion of the double complex $F \otimes K$ given in the following diagram:

\begin{tikzcd}
                                                            &                                                             &                                                         & F_{i} \otimes K_0 \arrow[r, dashed, "\partial"] \arrow[d, "\delta"] & F_{i} \otimes k \arrow[r, dashed] & H_i(F \otimes k) \\
                                                            &                                                             & F_{i-1} \otimes K_1 \arrow[r, dashed, "\partial"] \arrow[d, "\delta"] & F_{i-1} \otimes K_0                                 &                   \\
                                                            & F_{i-2} \otimes K_2 \arrow[r, dashed, "\partial"] \arrow[d, "\delta"] & F_{i-2} \otimes K_1                         &                                               &                   \\
F_0 \otimes K_i \arrow[r, dashed, "\partial"] \arrow[d, "\delta"] & \cdots                                                      &                                                         &                                               &                   \\
R \otimes K_i \arrow[d]                               &                                                             &                                                         &                                               &                  
\\
H_i(K^R)
\end{tikzcd}

Under this isomorphism, a multigraded basis of $H_i(F \otimes k)$ will get mapped to a multigraded basis of $H_i(K^R)$. Under the assumptions of Theorem \ref{mon basis}, we have an explicit basis of $F_i \otimes k$ and a description of $\delta$ by Proposition \ref{cone basis} and Proposition \ref{delta maps}. So, our method of proof is simply to check that $\gamma^u_\sigma \otimes 1$ can be mapped to $[\frac{u}{x_\ell}] e_\ell \wedge e_{\sigma}$ under this isomorphism. 

\begin{proof}[Proof of Theorem \ref{mon basis}]
 The basis element $\gamma^u_\sigma \otimes 1 \in H_i(F \otimes k)$ can be lifted to $\gamma^u_\sigma \otimes 1 \in F \otimes S$.  Mapping down by $\delta$, we get
\[
\delta(\gamma^u_\sigma \otimes 1) = -\sum_{t \in \sigma} \sgn(\sigma,t) x_t \gamma^u_{\sigma \backslash t} + \sum_{t \in \sigma} \sgn(\sigma,t) \frac{x_t u}{g(x_t u)} \gamma^{g(x_t u)}_{\sigma \backslash t}
\]
By the assumption that $I$ has nice Koszul lifts, the element
\[
-\sum_{t \in \sigma} \sgn(\sigma,t) \gamma^u_{\sigma \backslash t}e_t + \sum_{t \in \sigma} \sgn(\sigma,t) \frac{x_t u}{g(x_t u) x_\ell} \gamma^{g(x_t u)}_{\sigma \backslash t} e_\ell \in F_{i-1} \otimes K_1
\] 
is a lift of $\delta(\gamma^u_\sigma \otimes 1)$ (this is where ``nice Koszul lifts" gets its name). From here we proceed by induction. For $a \geq 1$, let $\Gamma_a \in F_{i - a} \otimes K_a$ be an element in $(\partial^{-1} \delta)^a(\gamma^u_\sigma \otimes 1)$. We will prove by induction that we can choose $\Gamma_a$ such that
\[
\Gamma_a = \sum_{\substack{\tau \subset \sigma \\
				|\tau| = a}} (-1)^a \epsilon_\tau \gamma^u_{\sigma \backslash t} e_{\tau}
\]
\[
+ \sum_{\substack{ \tau' \subset \sigma \\
					|\tau'| = a - 1}} \sum_{t' \in \sigma \backslash \tau'} (-1)^{a + 1} \epsilon_{\tau'} \sgn (\sigma \backslash \tau',t') \frac{x_{t'}{u}}{g(x_{t'}u)x_\ell} \gamma^{g(x_{t'} u)}_{\sigma \backslash \{t,t'\}} e_\ell \wedge e_{\tau'} 
\]
where $\epsilon_\tau := \prod_{t \in \tau} \sgn(\sigma,t)$ for a set $\tau \subset \sigma$. The base case was just done. For the induction step, we compute $\delta(\Gamma_a)$:
\[
\sum_{\substack{\tau \subset \sigma \\
				|\tau| = a}} \sum_{t \in \sigma \backslash \tau} (-1)^{a + 1} \epsilon_\tau \sgn(\sigma \backslash \tau , t) [ x_t \gamma^u_{\sigma \backslash \tau \backslash t} - \frac{x_t u}{g(x_t u)} \gamma^{g(x_t u)}_{\sigma \backslash \tau \backslash t}] e_{\tau}
\]
\[
+ \sum_{\substack{ \tau' \subset \sigma \\
					|\tau'| = a - 1}} \sum_{t' \in \sigma \backslash \tau'} \sum_{s' \in \sigma \backslash \tau' \backslash t'} (-1)^{a} \epsilon_{\tau'} \sgn(\sigma \backslash \tau',t') \sgn(\sigma \backslash \tau' \backslash t',s') [\frac{x_{s'} x_{t'}{u}}{g(x_{t'}u)x_\ell} \gamma^{g(x_{t'} u)}_{\sigma \backslash \tau' \backslash \{t',s'\}}
\]
\[
 - \frac{x_{t'}{u}}{g(x_{t'}u)x_\ell} \frac{x_{s'}g(x_{t'}u)}{g(x_{s'}g(x_{t'} u))} \gamma^{g(x_{s'} g(x_{t'} u))}_{\sigma \backslash \tau' \backslash \{t',s'\}}] e_\ell \wedge e_{\tau'}
\]
We label four types of terms appearing in this sum:
\[
\tag{Type 1}
(-1)^{a + 1} \epsilon_\tau \sgn(\sigma \backslash \tau , t)  x_t \gamma^u_{\sigma \backslash \tau \backslash t} e_\tau
\]
\[
\tag{Type 2}
(-1)^{a} \epsilon_\tau \sgn(\sigma \backslash \tau , t)\frac{x_t u}{g(x_t u)} \gamma^{g(x_t u)}_{\sigma \backslash \tau \backslash t} e_{\tau}
\]
\[
\tag{Type 3}
(-1)^{a} \epsilon_{\tau'} \sgn(\sigma \backslash \tau',t') \sgn(\sigma \backslash \tau' \backslash t',s') \frac{x_{s'} x_{t'}{u}}{g(x_{t'}u)x_\ell} \gamma^{g(x_{t'} u)}_{\sigma \backslash \tau' \backslash \{t',s'\}} e_\ell \wedge e_{\tau'}
\]
\[
\tag{Type 4}
(-1)^{a+1} \epsilon_{\tau'} \sgn(\sigma \backslash \tau',t') \sgn(\sigma \backslash \tau' \backslash t',s') \frac{x_{t'}{u}}{g(x_{t'}u)x_\ell} \frac{x_{s'}g(x_{t'}u)}{g(x_{s'}g(x_{t'} u))} \gamma^{g(x_{s'} g(x_{t'} u))}_{\sigma \backslash \tau' \backslash \{t',s'\}}] e_\ell \wedge e_{\tau'}
\]

We claim that terms of Type 4 cancel with each other.  Swapping $s'$ and $t'$ switches the sign, and by Lemma 1.11 of \cite{HerzogTakayama} says that $g(x_s g(x_t u)) = g(x_t g(x_s u))$ for $s,t \in \set(u)$. So indeed terms of Type 4 cancel.
\par
Now, we just need to show that for our given choice of $\Gamma_{a+1}$, $\partial(\Gamma_{a+1})$ is the appropriate sum of elements of types 1,2, and 3. We have
\[
\Gamma_{a+1} = \sum_{\substack{\rho \subset \sigma \\
				|\rho| = a+1}} (-1)^{a + 1} \epsilon_\rho \gamma^u_{\sigma \backslash \rho} e_{\rho}
+ \sum_{\substack{ \rho' \subset \sigma \\
					|\rho'| = a}} \sum_{b \in \sigma \backslash \rho'} (-1)^{a} \epsilon_{\rho'} \sgn(\sigma \backslash \rho',b) \frac{x_{b}{u}}{g(x_{b}u)x_\ell} \gamma^{g(x_b u)}_{\sigma \backslash \rho \backslash b} e_\ell \wedge e_{\rho'} 
\]
The image under $\partial$ of the first sum is
\[
\sum_{\substack{\rho \subset \sigma \\
				|\rho| = a+1}}
				\sum_{c \in \rho} (-1)^{a + 1} \epsilon_\rho \sgn(\rho,c) x_c \gamma^u_{\sigma \backslash c} e_{\rho \backslash c}
\]
which will consist of all terms of Type 1. To see this, we let $t = c$ and $\tau = \rho \backslash c$. We show that we get the appropriate signs. That is, we need $(-1)^{a + 1} \epsilon_{\tau \cup t} \sgn(\tau \cup t,t)$ and $(-1)^{a + 1} \epsilon_{\tau}  \sgn(\sigma \backslash \tau,t)$ to be equal. This is equivalent to $\sgn(\sigma,t) \sgn(\tau \cup t,t) = \sgn(\sigma \backslash \tau,t)$. Now 
\[
 \#\{m \in \sigma \backslash \tau : m < t\} = \#\{m \in \sigma : m < t\} - \#\{m \in \tau  \cup t: m < t\}
 \] so by definition of $\sgn$, the previous statement is true.
\par 
Next, we wish to show that the image of
\[
\sum_{\substack{ \rho' \subset \sigma \\
					|\rho'| = a}} \sum_{b \in \sigma \backslash \rho'} (-1)^{a}  \epsilon_{\rho'} \sgn(\sigma \backslash \rho',b) \frac{x_{b}{u}}{g(x_{b}u)x_m} \gamma^{g(x_b u)}_{\sigma \backslash \rho' \backslash b} e_\ell \wedge e_{\rho'} 
\]
under $\partial$, which is 
\[
\sum_{\substack{ \rho' \subset \sigma \\
					|\rho'| = a}} \sum_{b \in \sigma \backslash \rho'} (-1)^{a}  \epsilon_{\rho'}  \sgn(\sigma \backslash \rho',b) \frac{x_{b}{u}}{g(x_{b}u)} \gamma^{g(x_b u)}_{\sigma \backslash \rho' \backslash b} e_{\rho'} 
\]
\[
+ \sum_{\substack{ \rho' \subset \sigma \\
					|\rho'| = a}} \sum_{b' \in \sigma \backslash \rho'} \sum_{c' \in \rho'} (-1)^{a+1} \epsilon_{\rho'}  \sgn(\sigma \backslash \rho',b')  \sgn(\rho',c') \frac{x_{c'}x_{b'}{u}}{g(x_{b'}u)x_\ell} \gamma^{g(x_{b'} u)}_{ \sigma \backslash \rho' \backslash \{b',c'\}} e_\ell \wedge e_{\rho' \backslash c'} 
\]
consists of all terms of Types 2 and 3. The first sum, if we let $\tau = \rho'$ and $t = b$, are simply the terms of Type 2. In the triple sum, if we let $\tau' = \rho' \backslash c'$, $t' = b'$, and $s' = c'$, we get the necessary terms of Type 3, assuming the signs are correct. To see that they are, we need to show that 
\[
(-1)^a \epsilon_{\tau'} \sgn(\sigma \backslash \tau',t') \sgn(\sigma \backslash \tau' \backslash t',s') = (-1)^{a+1} \epsilon_{\tau' \cup c'} \sgn(\sigma \backslash \tau' \backslash c',b') \sgn(\tau \cup c', c')
\]
that is,
\[
\sgn(\sigma \backslash \rho' \cup c',b') \sgn(\sigma \backslash \rho' \backslash b' \cup c',c') = -\sgn(\sigma,c') \sgn(\rho', c') \sgn(\sigma \backslash \rho',b') 
\]
If $b' < c'$, this is equivalent to
\[
\sgn(\sigma \backslash \rho',b') (-\sgn(\sigma \backslash \rho' \cup c',c') = -\sgn(\sigma,c')\sgn(\rho',c')\sgn(\sigma \backslash \rho',b')
\]
which is true by the same consideration of the definition of $\sgn$ as above. The argument is similar when $b' > c'$. Either way, the sign is correct.
\par
The induction step is now done. By induction, $\Gamma_i$ is 
\[
(-1)^{i-1} \epsilon_\sigma \gamma^u_{\emptyset} e_{\sigma}
+ \sum_{t \in \sigma} (-1)^{i}  \epsilon_{\sigma \backslash t} \frac{x_{t}{u}}{g(x_{t}u)x_\ell} \gamma^{g(x_t u)}_{\emptyset} e_\ell \wedge e_{\sigma \backslash t} 
\]
whose image under $\delta$ is 
\[
(-1)^{i-1} \epsilon_\sigma u e_{\sigma}
+ \sum_{t \in \sigma} (-1)^{i}  \epsilon_{\sigma} \sgn(\sigma,t) \frac{x_{t}{u}}{x_\ell}e_\ell \wedge e_{\sigma \backslash t} = \partial((-1)^{i-1} \epsilon_\sigma \frac{u}{x_\ell} e_\ell \wedge e_{\sigma})
\]
Now $(-1)^{i-1}\epsilon_\sigma \frac{u}{x_\ell} e_\ell \wedge e_{\sigma}$ descends to the desired basis element of $H_{i}(K^R)$, and the theorem is proved.

\end{proof}

\bibliography{mybibliography}

\end{document}